\newcommand{\R}{\mathbb{R}}
\newcommand{\inner}[2]{\langle #1\,,\,#2\rangle} 
\newcommand{\Inner}[2]{\left\langle #1\,|\,#2\right\rangle}
\newcommand{\norm}[1]{\|{#1}\|}
\newcommand{\normq}[1]{\|{#1}\|^2}
\newcommand{\Normq}[1]{\left \|{#1}\right \|^2}
\newcommand{\Norm}[1]{\left\|{#1}\right\|}
\newcommand{\veps}{{\varepsilon}}
\newcommand{\Ha}{{\mathcal{H}}} 
\newcommand{\HH}{{\mathcal{H}}} 
\newcommand{\HHH}{\boldsymbol{\HH}}
\newcommand{\tos}{\rightrightarrows} 
\newcommand{\wto}{\rightharpoonup}
\newcommand{\comenta}[1]{} 
\newcommand{\set}[1]{\{#1\}}
\newcommand{\seq}[1]{\left(#1\right)}
\newcommand{\lab}[1]{\label{#1}}
\newcommand{\bprop}{\begin{proposition}}
\newcommand{\eprop}{\end{proposition}}
\newcommand{\blemm}{\begin{lemma}}
\newcommand{\elemm}{\end{lemma}}
\newcommand{\bdefi}{\begin{definition}}
\newcommand{\edefi}{\end{definition}}
\newcommand{\btheo}{\begin{theorem}}
\newcommand{\etheo}{\end{theorem}}
\newcommand{\bproo}{\begin{proof}}
\newcommand{\eproo}{\end{proof}}
\newcommand{\brema}{\begin{remark}}
\newcommand{\erema}{\end{remark}}
\newcommand{\bassu}{\begin{assumption}}
\newcommand{\eassu}{\end{assumption}}
\newcommand{\bcoro}{\begin{corollary}}
\newcommand{\ecoro}{\end{corollary}}
\newcommand{\benum}{\begin{enumerate}[label = \emph{(\alph*)}]}
\newcommand{\eenum}{\end{enumerate}}
\newcommand{\mgap}{\vspace{.1in}}
\newcommand{\beq}{\begin{equation}}
\newcommand{\eeq}{\end{equation}}
\DeclareMathOperator{\Dom}{Dom} 
\DeclareMathOperator{\Gra}{Gra}
\newtheorem{theorem}{Theorem}[section] 
\newtheorem{lemma}[theorem]{Lemma}
\newtheorem{corollary}[theorem]{Corollary}
\newtheorem{proposition}[theorem]{Proposition}
\newtheorem{remark}[theorem]{Remark}
\newtheorem{definition}[theorem]{Definition}
\newtheorem{assumption}[theorem]{Assumption}
\newcommand{\GG}{{\mathcal{G}}} 
\newcommand{\Sa}{{\mathcal{S}}} 
\newcommand{\wh}[1]{\widehat #1}
\newcommand{\wti}[1]{\widetilde #1}
\def\widebreve#1{\mathop{\vbox{\m@th\ialign{##\crcr\noalign{\kern\p@}%
\brevefill\crcr\noalign{\kern0.1\p@\nointerlineskip}%
$\hfil\displaystyle{#1}\hfil$\crcr}}}\limits}
\def\brevefill{$\m@th \setbox\z@\hbox{}%
 \hfill\scalebox{0.7}{\rotatebox[origin=c]{90}{(}} \kern4pt $}
\begin{document}

\title{An inexact inertial projective splitting algorithm with strong convergence}

\author{ 
M. Marques Alves
\thanks{ Departamento de Matem\'atica,
Universidade Federal de Santa Catarina,
Florian\'opolis, Brazil, 88040-900 ({\tt maicon.alves@ufsc.br}) (corresponding author).}
\and
J. E. Navarro Caballero
\thanks{ Departamento de Matem\'atica,
Universidade Federal de Santa Catarina,
Florian\'opolis, Brazil, 88040-900 ({\tt juan.caballero@ufsc.br}).}
\and
R. T. Marcavillaca
\thanks{ Centro de Modelamiento Matem\'atico (CNRS UMI2807), Universidad de Chile, Santiago, Chile, 8370458 ({\tt raultm.rt@gmail.com; rtintaya@dim.uchile.cl}). 
}
}
\date{\emph{\small Dedicated to Fabián Flores-Bazán on the occasion of his 60th birthday}}

\maketitle
\pagestyle{plain}

\begin{abstract}
We propose and study a strongly convergent inexact inertial projective splitting (PS) algorithm for finding zeros of composite monotone inclusion problems involving the sum of finitely many maximal monotone operators. Strong convergence of the iterates is ensured by projections onto the intersection of appropriately defined half-spaces, even in the absence of inertial effects. We also establish iteration-complexity results for the proposed PS method, which likewise hold without requiring inertial terms. The algorithm includes two inertial sequences, controlled by parameters satisfying mild conditions, while preserving strong convergence and enabling iteration-complexity analysis. Furthermore, for more structured monotone inclusion problems, we derive two variants of the main algorithm that employ forward-backward and forward-backward-forward steps.
\\
\\ 
2000 Mathematics Subject Classification: 47H05, 49M27, 90C33.
%
%
%
%
%
\\
\\
Key words: Projective splitting, inertial algorithms, strong convergence, forward-backward methods. 
\end{abstract}

\pagestyle{plain}

\section{Introduction}\label{sec:int}

The projective splitting (PS) algorithm was originally proposed by Eckstein and Svaiter in \cite{ES08, ES09} and has since been studied and extended in various directions, including:
(i) PS variants for solving coupled and more general forms of monotone 
inclusions~\cite{Alg+14, ACS14, BC20, BC22};
(ii) asynchronous and block-iterative implementations~\cite{CE18};
(iii) modifications incorporating inertial effects in the iteration process~\cite{AGM24, CG17, Mac25};
(iv) PS-type algorithms with forward and forward-backward steps for subproblems~\cite{JE21, JE22};
(v) a proximal-Newton-type PS algorithm for solving inclusions with smooth operators~\cite{Alv25}; and
(vi) the analysis of convergence rates and iteration-complexity~\cite{EJ19, MS23}.

\mgap

The PS method is designed to numerically solve multi-term (finite-sum) monotone inclusion problems involving finitely many maximal monotone maps, as well as compositions with bounded linear operators (see problem \eqref{eq:probr} below). The main idea is to embed \eqref{eq:probr} into an enlarged product space, reformulating the monotone inclusion problem as a convex feasibility problem that seeks a point in an extended-solution set (see \eqref{eq:KTS}). Given the feasibility problem, cutting planes are constructed using affine 
functionals (see \eqref{eq:localiz} and \eqref{eq:sep.F} below) derived from 
elements in the graph of (the enlargements of) the maximal monotone operators associated with the original problem \eqref{eq:probr}. 
The new framework is more flexible than existing operator splitting methods and demonstrates promising numerical performance in certain 
applications (see, e.g. \cite{JE22}).

\mgap

It is well known that proximal algorithms typically converge weakly (see, e.g., \cite{Gul91, Roc76}), requiring modifications in the iteration mechanism to achieve strong convergence of iterates (see, e.g., \cite{BC01, BC11}). In the context of modern inexact relative-error proximal algorithms~\cite{MS10, SS99b}, this approach was first introduced in \cite{SS00}, where an inexact proximal-type algorithm with strong convergence guarantees for monotone inclusions was proposed.

\mgap

In this paper, we propose and analyze an inertial inexact PS algorithm with strong convergence guarantees. 
Our new PS variant (see Algorithm \ref{alg:iHPEP} below) maintains essentially all attractive properties of 
previous versions while ensuring strong convergence of iterates to the best approximation of the initial guess in the extended-solution set. To the best of our knowledge, this is the first time in the literature that an inertial inexact PS-type algorithm with strong convergence has been proposed. Furthermore, building on Algorithm \ref{alg:iHPEP}, we introduce two variants that incorporate forward-backward and forward-backward-forward (Tseng) steps for subproblems (see Algorithms \ref{alg:iHPEP-FB} and \ref{alg:iHPEP-Tseng-FB}).

\mgap

The rest of the paper is organized as follows. In Section \ref{sec:gn}, we introduce the general notation used in this paper and present some basic results. In Section \ref{sec:main}, we propose and analyze our main algorithm, namely Algorithm \ref{alg:iHPEP}. In Section \ref{sec:FB-TFB}, we introduce two variants of Algorithm \ref{alg:iHPEP} by incorporating forward-backward and forward-backward-forward (Tseng) steps for subproblems. 
Section \ref{sec:conc} is devoted to concluding remarks.
In Appendix \ref{app:well}, we prove that Algorithm \ref{alg:iHPEP} is well-defined, and in Appendix \ref{app:ar}, we provide some auxiliary results.

\section{General notation and some basic results}
\lab{sec:gn}

\subsection{General notation}

Let $\Ha$ be a Hilbert space with inner product $\inner{ \cdot}{ \cdot}$ and induced norm $\norm{\cdot} = \sqrt{\inner{\cdot}{ \cdot}}$.
The \emph{effective domain} and \emph{graph} of a set-valued map $T\colon \HH\tos \HH$ are defined, respectively, by $\Dom T = \set{x \in \HH \mid T(x) \neq \emptyset}$ and 
$\Gra T = \{(x,u) \in \Ha \times \Ha \mid u \in T(x)\}$.
A set-valued map $T \colon \Ha \tos \Ha$ is said to be \emph{monotone operator} if $\inner{x - y}{u - v} \geq 0$ for all $u \in T(x)$ and $v \in T(y)$, and \emph{maximal monotone} if it is monotone and its graph 
is not properly contained in the graph of any other monotone operator on $\Ha$. 
The inverse of $T \colon \Ha \tos \Ha$ is $T^{-1} \colon \Ha \tos \Ha$, defined at any $x \in \Ha$ by $u \in T^{-1}(x)$ if and only if $x \in T(u)$. 
The operator $\gamma T \colon \Ha \tos \Ha$, for $\gamma > 0$, is defined  by $(\gamma T)x := \gamma T(x):=\{\gamma u \mid u \in T(x) \}$.
The sum of $T \colon \Ha \tos \Ha$ and $S \colon \Ha \tos \Ha$ is defined by $(T+S)(x)=\{ u + v \mid u \in T(x),\; v \in S(x)\}$. 
The \emph{resolvent} of a maximal monotone operator $T \colon \Ha \tos \Ha$ is $J_T:=(T + I)^{-1}$, where $I$ denote the identity operator on $\Ha$. 
For a set-valued map $T \colon \Ha \tos \Ha$ and $\varepsilon \geq 0$, the $\varepsilon$-\emph{enlargement} of $T$, $T^{[\varepsilon]}\colon \Ha \tos \Ha$, is defined as
\begin{align} \lab{def:enlarg}
 T^{[\varepsilon]}(x) = \{u \in \Ha \mid \inner{x - y}{u - v} \geq - \varepsilon \enspace \mbox{ for all } (y, v) \in \Gra T\}.
\end{align}
Whenever necessary, we will identify single-valued maps $F\colon \Dom F \subseteq \HH \to \HH$ with its
set-valued representation $F: \HH \tos \HH$ by $F(x) = \set{F(x)}$.
For a nonempty closed and convex set $ C $, the \emph{orthogonal projection} operator onto $C$ is denoted by $P_C$, i.e., $P_C(x)$ is defined as the unique element in $C$ such that 
$\norm{x - P_C(x)} \leq \norm{x-y}$ for all $y \in C$. 
%
\emph{Strong} and \emph{weak} convergence of sequences are denoted by the symbols $\to$ and $\wto$, respectively. 
The \emph{adjoint} of a bounded linear operator $ B $ on $ \HH $ is denoted by $ B^* $ and its \emph{norm} is
defined by $ \norm{B}\coloneqq \sup_{\norm{x}\leq 1}\,\norm{Bx} $.

\mgap

Now let $\HH_0, \HH_1,\ldots, \HH_{n-1}$ be real Hilbert spaces, let $\HH_n \coloneqq \HH_0$ and let $\inner{\cdot}{\cdot}$ and
 $\|\cdot\| = \sqrt{\inner{\cdot}{\cdot}}$ denote the inner product and norm (respectively) in 
 $\HH_i$ ($1\leq i\leq n$). Also, let $\HHH \coloneqq \HH_0\times \HH_1\times \dots\times \HH_{n-1}$ be endowed with the inner product and norm
defined, respectively, as follows (for some $\gamma >0$):
\begin{align} \label{eq:def.inner}
\inner{p}{p'}_{\gamma} = \gamma\inner{z}{z'}+\sum_{i=1}^{n-1}\inner{w_i}{w'_i},\quad
\norm{p}_\gamma = \sqrt{\inner{p}{p}_\gamma},
\end{align}
where $p = (z, w)$, $p' = (z', w')$, with $z, z'\in \HH_0$ and 
$w \coloneqq (w_1,\dots, w_{n-1}), w' \coloneqq (w'_1,\dots, w'_{n-1})\in \HH_1\times \ldots\times \HH_{n-1}$.
Elements in $\HHH$ will be denoted either by $p=(z, w_1,\dots, w_{n-1})$ or simply $p=(z,w)$, where $w=(w_1,\dots, w_{n-1})$, and we will use the notation
 \begin{align} \label{eq:def.wn}
 w_n = -\sum_{i=1}^{n-1}\,G_i^* w_i
\end{align}
whenever $p\in \HHH$ and $ G_i \colon \Ha_0 \to  \Ha_i $ is a bounded linear operator ($1\leq i \leq n-1$).

\subsection{Basic results}

In this subsection, we present some basic results needed for this paper. 
Let $\HH$ and $\GG$ denote real Hilbert spaces.
The following proposition
summarizes some properties of the $\varepsilon$-enlargement $T^{[\varepsilon]}$ (for a proof see, e.g.,  \cite{BSS99}).

\bprop \label{pr:teps}
Let $T, S \colon \Ha\tos \Ha$ be set-valued maps.  The following holds:
\begin{enumerate}[label = \emph{(\alph*)}]
\item \lab{pr:teps.seg}  
If $\varepsilon_1 \leq \varepsilon_2$, then
$T^{[\varepsilon_1]}(x)\subseteq T^{[\varepsilon_2]}(x)$ for all $x \in \Ha$.
\item \lab{teps.ter}  $T^{[\varepsilon_1]}(x)+S^{[\varepsilon_2]}(x) \subseteq
(T+S)^{[\varepsilon_1+\varepsilon_2]}(x)$ for all $x \in \Ha$ and
$\varepsilon_1, \varepsilon_2\geq 0$.
\item \lab{teps.qua} $T$ is monotone if and only if $\Gra T  \subseteq \Gra T^{[0]}$.
\item \lab{pr:teps.qui} $T$ is maximal monotone if and only if $T = T^{[0]}$.
\item \lab{pr:teps.sab} 
If $T$ is maximal monotone, the sequence $(y_k,v_k,\varepsilon_k)$ is
such that $v_k\in T^{[\varepsilon_k]}(y_k)$, $y_k \wto y$,
$v_k \to v$ and $\varepsilon_k \to \varepsilon$, then
$v\in T^{[\varepsilon]}(y)$.
\end{enumerate}
\eprop

\mgap

Recall that $F \colon \Dom F\subseteq \HH \to \Ha$ is said to be 
$L^{-1}$-cocoercive if
\[
\inner{F(x)- F(y)}{x - y}\geq \dfrac{1}{L}\normq{F(x)- F(y)}\quad \mbox{for all} \quad x, y\in \Dom F.
\]

\mgap

\begin{lemma}\emph{(\cite[Lemma 2.2]{Sva14})}
\label{lem:trans}
Let $F \colon \Dom F\subseteq \HH \to \Ha$ be $L^{-1}$-cocoercive. 
Then, for every $z, z' \in \Dom F$,
\begin{align*}
F(z) \in F^{[\varepsilon]}(z'), \quad \mbox{with } \varepsilon = \dfrac{L\normq{z'-z}}{4}.
\end{align*}
\end{lemma}

The proof of the following lemma follows from the definition of $\varepsilon$-enlargements of set-valued maps combined with simple algebraic manipulations.

\begin{lemma}\label{lem:prope}
Let $A \colon \HH \tos \HH$ and $B \colon \GG \tos \GG$ be maximal monotone operators  and 
$\varepsilon^a$, $\varepsilon^b \geq 0$. Then, for all $x \in \HH$ and $y \in \GG$,
\begin{equation*}
A^{[\varepsilon^a]}(x) \times B^{[\varepsilon^b]}(y) \subseteq (A \times B)^{[\varepsilon^a + \varepsilon^b]}(x, y)
\end{equation*}
where $A\times B\colon \HH\times \GG \tos \HH\times \GG$ is defined by $(A \times B)(z, v) = A(z)\times B(v)$.
\emph{(}$\HH\times \GG$ endowed with the usual inner product.\emph{)}
\end{lemma}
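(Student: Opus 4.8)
The plan is to unwind both sides through the definition \eqref{def:enlarg} of the $\varepsilon$-enlargement and reduce the product-space inequality to the two coordinate inequalities. First I would fix an arbitrary pair $(u,w)\in A^{[\varepsilon^a]}(x)\times B^{[\varepsilon^b]}(y)$, so that $u\in A^{[\varepsilon^a]}(x)$ and $w\in B^{[\varepsilon^b]}(y)$, and aim to show that $(u,w)\in (A\times B)^{[\varepsilon^a+\varepsilon^b]}(x,y)$. By \eqref{def:enlarg}, this amounts to verifying
\[
\inner{(x,y)-(z,v)}{(u,w)-(s,t)} \geq -(\varepsilon^a+\varepsilon^b)
\]
for every $((z,v),(s,t))\in\Gra(A\times B)$.

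Second, I would exploit the product structure. By the definition of $A\times B$, a pair $((z,v),(s,t))$ lies in $\Gra(A\times B)$ precisely when $(z,s)\in\Gra A$ and $(v,t)\in\Gra B$. Since $\HH\times\GG$ carries the usual inner product, the left-hand side decouples as
\[
\inner{(x,y)-(z,v)}{(u,w)-(s,t)} = \inner{x-z}{u-s}_\HH + \inner{y-v}{w-t}_\GG.
\]
Third, I would apply \eqref{def:enlarg} to each operator separately: from $u\in A^{[\varepsilon^a]}(x)$ together with $(z,s)\in\Gra A$ we obtain $\inner{x-z}{u-s}_\HH\geq -\varepsilon^a$, and from $w\in B^{[\varepsilon^b]}(y)$ together with $(v,t)\in\Gra B$ we obtain $\inner{y-v}{w-t}_\GG\geq -\varepsilon^b$. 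Adding these two inequalities yields exactly the bound $-(\varepsilon^a+\varepsilon^b)$, which closes the argument.

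There is essentially no analytic obstacle here; the proof is purely a matter of bookkeeping. The only point requiring a little care is the correct identification of $\Gra(A\times B)$ as the product of the two graphs, so that the test pairs for $A$ and $B$ range \emph{independently}, together with the fact that the inner product on $\HH\times\GG$ splits into the sum of the two component inner products, which is precisely what permits the two coordinate estimates to be combined additively. I would also note that maximality of $A$ and $B$ is not actually used in establishing this inclusion—mere monotonicity, or indeed no assumption at all, suffices—although stating it for maximal monotone operators is consistent with the setting in which the lemma will later be applied.
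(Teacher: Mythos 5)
Your proof is correct and is exactly the argument the paper has in mind: the paper omits the details, stating only that the lemma ``follows from the definition of $\varepsilon$-enlargements of set-valued maps combined with simple algebraic manipulations,'' and your decomposition of $\Gra(A\times B)$ into the product of the two graphs, the splitting of the inner product on $\HH\times\GG$, and the addition of the two coordinate inequalities is precisely that routine verification. Your closing observation that maximality (indeed even monotonicity) is not needed for this inclusion is also accurate and consistent with how the paper states its other enlargement properties for general set-valued maps.
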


\mgap

\blemm\label{lm:proj}
Let $C \subseteq \Ha$ be nonempty, closed and convex and 
let $P_C (\cdot)$ be the orthogonal projection onto $C$. 
Then, for all $x, y\in \Ha$ and $z\in C$,
\begin{align}
\label{eq:pne}
 &\Norm{P_C(x) - P_C(y) }^2 \leq \norm{x - y}^2 - \Norm{\left[x - P_C(x)\right] - \left[y - P_C(y)\right]}^2,\\[4mm]
\label{eq:pcr}
&\langle x - P_C(x), z - P_C(x) \rangle \leq 0.
\end{align}
\elemm

The standard inequality 

\begin{align}\label{eq:ineq.norm}
\Normq{\sum_{i=1}^n x_i} \leq n \sum_{i=1}^n \normq{x_i},
\end{align}
that holds for $x_1, x_2, \dots, x_n$ in $\Ha$, will also be needed in this paper.

\section{A strongly convergent inertial projective splitting algorithm} 
\label{sec:main}

Consider the multi-term monotone inclusion problem
\begin{align}\label{eq:probr}
0 \in \sum_{i=1}^n G_i^\ast T_i (G_i z)
\end{align}
where $n\geq 2$, and  the following assumptions hold:
\begin{itemize}
\item[A1.]\lab{assu:A1} 
For each $i = 1,\dots, n$, the operator $T_i \colon \Ha_i \tos \Ha_i$ is (set-valued) maximal
monotone.
\item[A2.] \lab{assu:A2} For each $i = 1,\dots, n$, it holds that $G_i \colon \Ha_0 \to  \Ha_i$  is a bounded linear operator. Moreover, the linear operator $G_n$ is equal to the identity map in $\Ha_0=\Ha_n$, i.e., $G_n \colon z\mapsto z$
for all $z\in \Ha_0$.
\item[A3.] \lab{assu:A3} The solution set of \eqref{eq:probr} is nonempty, i.e., there exists at least one $z\in \Ha_0$ satisfying
the inclusion in \eqref{eq:probr}.
\end{itemize}

In this section we propose and study a strongly convergent (relative-error) inexact  projective splitting (PS) algorithm with inertial effects, namely Algorithm \ref{alg:iHPEP} below, for solving \eqref{eq:probr}. This algorithm is motivated by the projective splitting framework of \cite{ACS14, ES08}, incorporating inertial effects on iteration and relative-error criterion for the inexact solution of subproblems. Moreover, Algorithm \ref{alg:iHPEP} is also specially designed to guarantee strong convergence of iterates, by defining the next iterate as the 
projection of the initial guess $ p^0 $ onto the intersection of two 
half-spaces (see \eqref{def:Hk-Wk} and \eqref{eq:updt01}). We also refer to the several comments and remarks just before or right after Algorithm \ref{alg:iHPEP} for more details. 

Regarding the analysis of Algorithm \ref{alg:iHPEP}, the main results of this section are Theorems \ref{th:sconv}
and \ref{theo:ite-com} on strong convergence and iteration-complexity, respectively. In Subsection \ref{subsec:conv}, we prove the (strong) asymptotic convergence of Algorithm \ref{alg:iHPEP} and in Subsection \ref{subsec:itec} we analyze its iteration-complexity. 

Before introducing the algorithm, we have to define the extended-solution set (linked to Problem \eqref{eq:probr}) and a special family of affine functionals, as in \eqref{eq:KTS} and \eqref{eq:sep.F}, respectively. The main motivation for introducing the extended-solution set $\Sa_e$ and the 
family $\set{\varphi_k(\cdot)}$ is (see, e.g., \cite{ES08}) to reduce the monotone inclusion \eqref{eq:probr} to the convex feasibility problem (in the enlarged space 
$\HHH \coloneqq \HH_0\times \HH_1\times \dots\times \HH_{n-1}$) of finding a point in $\Sa_e$, and subsequently using the affine functionals $\varphi_k(\cdot)$ as cutting planes.

\mgap 

\noindent
{\bf The extended-solution set of \eqref{eq:probr}.} Note first that, since $G_n = I$ (see Assumption A2), then one can write our monotone inclusion problem \eqref{eq:probr} as
\begin{align}\label{eq:probrn}
0\in \sum_{i=1}^{n-1}G_i^*T_iG_i(z) + T_n(z).
\end{align}
The extended-solution set (or generalized Kuhn-Tucker set) for the problem \eqref{eq:probr} is defined 
as 
\begin{align}\label{eq:KTS}
\Sa_e \coloneqq \left\{(z,w_1,\ldots,w_{n-1})\in \boldsymbol{\Ha} \left |\ w_i\in T_i(G_iz),\, i=1,\ldots,n-1,\, -\sum_{i=1}^{n-1}G^*_iw_i\in T_n(z) \right\} \right. \,.
\end{align}
The set $\Sa_e$ is a closed and convex subset of $\HHH$ (see, e.g., \cite{ACS14}) and, in view of Assumption A3, it is
also nonempty. Actually, it is easy to check that $z$ in $\HH_0$ satisfies \eqref{eq:probrn} if and only if 
there exists $(w_1, \dots, w_{n-1})$ in $\HH_1\times \dots\times \HH_{n-1}$ such that
$p \coloneqq (z, w_1, \dots, w_{n-1})$ belongs to $\Sa_e$. Therefore, our problem \eqref{eq:probr} (or, equivalently, 
\eqref{eq:probrn}) can be posed as the convex feasibility problem of finding a point in the closed and convex set 
$\Sa_e$. Motivated by this observation, next we introduce the family $\set{\varphi_k(\cdot)}$ of affine functionals that will be used as cutting planes in our PS framework (see Eq. \eqref{def:Hk-Wk} in Algorithm \ref{alg:iHPEP}).

\mgap 

\noindent
{\bf The family of (affine) separators.} We first observe that if we pick, for each $i = 1, \dots, n$, a pair $(x_i^k, y_i^k)$ in the graph of $T_i^{[\varepsilon_i^k]}$, i.e., if we pick 
$y_i^k \in T^{[\epsilon_i^k]}_i(x_i^k)$, then by the definition \eqref{def:enlarg} of $T_i^{[\varepsilon_i^k]}$ and the inclusions in \eqref{eq:KTS}
we obtain
\begin{align*}
\langle G_i z - x_i^k, w_i - y_i^k \rangle \geq -\varepsilon_i^k\quad \mbox{for all} \quad (z,w_1, \ldots, w_{n-1}) \in \Sa_e,
\end{align*}
where
\begin{align}\label{eq:defWn}
w_n \coloneqq - \sum_{i=1}^{n-1} G_i^\ast w_i.
\end{align}
(Note that $w_n\in T_n(z)$; see \eqref{eq:KTS} and \eqref{eq:defWn}.) Therefore, by summing up the above inequalities, for $i = 1, \dots, n $, we conclude that
\begin{align} \lab{eq:localiz}
\varphi_k(\underbrace{z, w_1, \dots, w_{n-1}}_{p})\leq 0\quad \mbox{for all} \quad p\in \Sa_e,
\end{align}
where $\varphi_k \colon \boldsymbol{\Ha}\to \R$ is defined as 
\begin{align}\label{eq:sep.F}
\varphi_k(p)  =  \sum_{i=1}^n \left(\inner{G_i z -x_i^k}{y_i^k - w_i} - \varepsilon_i^k\right)\quad \mbox{for all} \quad
p = (z, w_1, \dots, w_{n-1})\in \HHH.
\end{align}

Condition \eqref{eq:localiz} ensures that the set $\Sa_e$, to which we want to find a point in, is a subset of the 
``negative'' half-space $H_k \coloneqq \set{p \in \HHH \mid \varphi_k(p)\leq 0}$. Later on we will built another half-space containing $\Sa_e$ (see Eq. \eqref{def:Hk-Wk}) and successive projections onto the intersection of both these half-spaces (see Eq. \eqref{eq:updt01}) will be instrumental for the mechanism of iteration of Algorithm \ref{alg:iHPEP}.

\mgap

The next lemma, which is motivated by~\cite[Lemma 4]{JE22}, summarizes some 
properties of $\varphi_k(\cdot)$ which will be useful in this paper.

\begin{lemma}\label{lem:grad.sep}
Let $\varphi_k(\cdot)$ and $\Sa_e$ be as in \eqref{eq:sep.F} and \eqref{eq:KTS}, respectively.
The following hold:
\begin{enumerate}[label = \emph{(\alph*)}]
\item \lab{lem:grad.sep.seg} 
We have
\begin{align} \lab{eq:car.varphi}
\varphi_k(p) = \left \langle z, \sum_{i=1}^{n-1}\,G_i^* y_i^k + y_n^k\right\rangle + \sum_{i=1}^{n-1}\inner{x_i^k - G_i x_n^k}{w_i} - \sum_{i=1}^n\,\inner{x_i^k}{y_i^k}
- \sum_{i=1}^n \varepsilon_i^k
\end{align}
for all $p = (z, w_1, \dots, w_{n-1})\in \HHH$. In particular, $\varphi_k(\cdot)$  is an affine functional on $\boldsymbol{\Ha}$.
\item \lab{lem:grad.sep.ter}
$\varphi_k(p)\leq 0$ for all $p\in \Sa_e$.
\item \lab{lem:grad.sep.qua} 
The gradient of $\varphi_k(\cdot)$  with respect to the inner product $\inner{\cdot}{\cdot}_{\gamma}$ as in \eqref{eq:def.inner} is
\begin{align}\label{eq:grad.phi_k}
\nabla\varphi_k=\left(\gamma^{-1}\left(\sum_{i=1}^{n-1}G_i^*y_i^k+y_n^k\right),x_1^k - G_1x_n^k, \ldots,
x_{n-1}^k - G_{n-1}x_{n}^k\right)
\end{align}
and
\begin{align}\label{eq:norm.grad.pi_k}
\norm{\nabla\varphi_k}_{\gamma}^2 =
\gamma^{-1}\left\|\sum_{i=1}^{n-1}G_i^*y_i^k + y_n^k\right\|^2 + \sum_{i=1}^{n-1}\norm{x_i^k - G_ix_{n}^k}^2.
\end{align}
\item \lab{lem:grad.sep.qui}
$\nabla \varphi_k = 0$ if and only if $\sum_{i=1}^{n-1}G_i^*y_i^k+y_n^k = 0$ and
$x_i^k = G_i x_n^k$ for all $i = 1, \dots, n-1$.
\item \lab{lem:grad.sep.sex} If $\nabla \varphi_k = 0$, then $(x_n^k, y_1^k, \ldots, y_{n-1}^k)\in \Sa_e$ whenever
one of the following conditions is satisfied:
\begin{itemize}
\item[\emph{(i)}] $\varepsilon_i^k = 0$, for all $i = 1, \dots, n$.
\item[\emph{(ii)}] There exists $\widetilde p = (\widetilde z, \widetilde w_1, \dots, \widetilde w_{n-1})$ such that
$\varphi_k(\widetilde p) \geq 0$.
\end{itemize}
\end{enumerate}
\end{lemma}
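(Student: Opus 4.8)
The plan is to prove \iteref{lem:grad.sep.seg} by a direct expansion, then read off \iteref{lem:grad.sep.qua} and \iteref{lem:grad.sep.qui} mechanically from the resulting affine form, and finally treat \iteref{lem:grad.sep.sex} with a short argument combining maximality and a constancy observation; part \iteref{lem:grad.sep.ter} needs nothing new, as it is exactly inequality \eqref{eq:localiz} established just before the lemma.

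For \iteref{lem:grad.sep.seg} I would expand each summand of \eqref{eq:sep.F} as
\[
\inner{G_i z - x_i^k}{y_i^k - w_i} = \inner{G_i z}{y_i^k} - \inner{G_i z}{w_i} - \inner{x_i^k}{y_i^k} + \inner{x_i^k}{w_i}
\]
and sum over $i = 1,\dots,n$. The crux is to collect the $z$-terms and the $w_i$-terms using the adjoint identity $\inner{G_i z}{\cdot} = \inner{z}{G_i^*\cdot}$, the convention $w_n = -\sum_{i=1}^{n-1} G_i^* w_i$ from \eqref{eq:defWn}, and $G_n = I$. Splitting off the index $i=n$: the $z$-terms collapse to $\inner{z}{\sum_{i=1}^{n-1} G_i^* y_i^k + y_n^k}$; the mixed terms $\sum_i \inner{G_i z}{w_i}$ cancel entirely (this is precisely where $w_n = -\sum G_i^* w_i$ is used); and $\sum_i \inner{x_i^k}{w_i}$ rearranges into $\sum_{i=1}^{n-1}\inner{x_i^k - G_i x_n^k}{w_i}$ after substituting $\inner{x_n^k}{w_n} = -\sum_{i=1}^{n-1}\inner{G_i x_n^k}{w_i}$. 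What survives is the constant $-\sum_i \inner{x_i^k}{y_i^k} - \sum_i \varepsilon_i^k$, yielding \eqref{eq:car.varphi}; affineness in $(z,w_1,\dots,w_{n-1})$ is then immediate.

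Parts \iteref{lem:grad.sep.qua} and \iteref{lem:grad.sep.qui} follow by Riesz representation. Writing \eqref{eq:car.varphi} as $\varphi_k(p) = \inner{a}{z} + \sum_{i=1}^{n-1}\inner{b_i}{w_i} + c$ with $a = \sum_{i=1}^{n-1} G_i^* y_i^k + y_n^k$ and $b_i = x_i^k - G_i x_n^k$, the gradient with respect to $\inner{\cdot}{\cdot}_\gamma$ must satisfy $\inner{\nabla\varphi_k}{p'}_\gamma = \inner{a}{z'} + \sum_i \inner{b_i}{w'_i}$; comparing with the weighting in \eqref{eq:def.inner} forces the first block to be $\gamma^{-1}a$ and the remaining blocks to be the $b_i$, which is \eqref{eq:grad.phi_k}. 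Then $\norm{\nabla\varphi_k}_\gamma^2 = \gamma\norm{\gamma^{-1}a}^2 + \sum_i \norm{b_i}^2 = \gamma^{-1}\norm{a}^2 + \sum_i \norm{b_i}^2$ gives \eqref{eq:norm.grad.pi_k}, and since $\gamma > 0$, $\nabla\varphi_k = 0$ holds iff $a = 0$ and every $b_i = 0$, which is \iteref{lem:grad.sep.qui}.

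The substantive part is \iteref{lem:grad.sep.sex}. Assuming $\nabla\varphi_k = 0$, part \iteref{lem:grad.sep.qui} gives $x_i^k = G_i x_n^k$ for $1\le i\le n-1$ and $y_n^k = -\sum_{i=1}^{n-1} G_i^* y_i^k$, so checking $(x_n^k, y_1^k,\dots,y_{n-1}^k)\in\Sa_e$ through \eqref{eq:KTS} reduces to the exact inclusions $y_i^k \in T_i(x_i^k)$ for all $i$. Under (i) this is immediate: $\varepsilon_i^k = 0$ and maximality give $y_i^k \in T_i^{[0]}(x_i^k) = T_i(x_i^k)$ by Proposition~\ref{pr:teps}\iteref{pr:teps.qui}. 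For (ii), I would first note that a zero gradient makes $\varphi_k$ constant; since $\Sa_e\neq\emptyset$ forces this constant to be $\leq 0$ by \iteref{lem:grad.sep.ter}, while $\varphi_k(\widetilde p)\geq 0$ forces it to be $\geq 0$, we obtain $\varphi_k \equiv 0$. Evaluating this constant via \eqref{eq:car.varphi} with $a = 0$ and all $b_i = 0$ leaves $-\sum_{i=1}^n \inner{x_i^k}{y_i^k} - \sum_{i=1}^n \varepsilon_i^k = 0$; and using $x_i^k = G_i x_n^k$ together with $\sum_{i=1}^{n-1} G_i^* y_i^k + y_n^k = 0$ collapses $\sum_{i=1}^n \inner{x_i^k}{y_i^k}$ to $\inner{x_n^k}{\sum_{i=1}^{n-1} G_i^* y_i^k + y_n^k} = 0$. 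Hence $\sum_{i=1}^n \varepsilon_i^k = 0$, and since each $\varepsilon_i^k \geq 0$ we conclude $\varepsilon_i^k = 0$ for every $i$, reducing (ii) to (i). The only place demanding care is this last reduction: simply testing $\varphi_k$ against a point of $\Sa_e$ yields only the vacuous $\sum_i \varepsilon_i^k \geq 0$, so the decisive step is recognizing that the constant value of $\varphi_k$ equals precisely $-\sum_i \inner{x_i^k}{y_i^k} - \sum_i \varepsilon_i^k$ and that the cross term vanishes on the gradient-zero locus.
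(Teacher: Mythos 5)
Your proposal is correct and follows essentially the same route as the paper: a direct expansion using $G_n = I$ and the convention \eqref{eq:defWn} for part \iteref{lem:grad.sep.seg}, reading off \iteref{lem:grad.sep.qua} and \iteref{lem:grad.sep.qui} from the affine form, and reducing \iteref{lem:grad.sep.sex}(ii) to (i) via the vanishing cross term $\sum_{i=1}^n\inner{x_i^k}{y_i^k} = \Inner{x_n^k}{\sum_{i=1}^{n-1}G_i^*y_i^k + y_n^k} = 0$, which is exactly the paper's identity \eqref{eq:lualua}. The only cosmetic difference is that you pin the constant value of $\varphi_k$ at zero by also invoking $\Sa_e \neq \emptyset$ together with \iteref{lem:grad.sep.ter}, whereas the paper concludes $\sum_{i=1}^n \varepsilon_i^k = -\varphi_k(\widetilde p) \leq 0$ directly from $\varphi_k(\widetilde p)\geq 0$ and nonnegativity of the $\varepsilon_i^k$; this extra step is harmless but unnecessary.
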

\begin{proof}
The proof follows essentially the same outline of the proof of \cite[Lemma 4]{JE22}. Item 
\emph{\ref{lem:grad.sep.seg}} follows from \eqref{eq:sep.F}, \eqref{eq:defWn}, the fact that 
$G_n = I$ (see Assumption A2) and some simple algebra. Item \emph{\ref{lem:grad.sep.ter}} was already proved; see \eqref{eq:localiz}. 
Item \emph{\ref{lem:grad.sep.qua}} follows from \eqref{eq:car.varphi} and the definition of the inner product
$\inner{\cdot}{\cdot}_{\gamma}$ as in \eqref{eq:def.inner}. Item \emph{\ref{lem:grad.sep.qui}} is an obvious consequence of \eqref{eq:grad.phi_k}. To prove item \emph{\ref{lem:grad.sep.sex}}, assume first that 
$\nabla \varphi_k = 0$. Since $y_i^k \in T^{[\epsilon_i^k]}_i(x_i^k)$, for $i = 1, \dots, n$, by item 
\emph{\ref{lem:grad.sep.qui}} we find 
\begin{align*}
 y_i^k\in T_i^{[\varepsilon_i^k]}(G_i x_n^k)\quad (i = 1, \dots, n-1) \quad \mbox{and} \quad
 - \sum_{i=1}^{n-1}\,G_i^* y_i^k = y_n^k \in T_n^{[\varepsilon_n^k]}(x_n^k).
 \end{align*}
Hence, if  $\varepsilon_i^k = 0$, for all $i = 1, \dots, n$, then $(x_n^k, y_1^k, \ldots, y_{n-1}^k)\in \Sa_e$
by the definition of $\Sa_e$ and Proposition \ref{pr:teps}\emph{\ref{pr:teps.qui}}. 
This finishes the proof of (i).
To prove (ii), and consequently finish the proof of the lemma, assume that $\varphi_k(\widetilde p) \geq 0$ for some $\widetilde p = (\widetilde z, \widetilde w_1, \dots, \widetilde w_{n-1})$. 
First note that, by item \emph{\ref{lem:grad.sep.qui}} and some simple algebra, we have 
\begin{align} \lab{eq:lualua}
\sum_{i=1}^n\, \inner{ x_i^k }{ y_i^k } = \Inner{ x_n^k }{ \sum_{i=1}^{n-1}G_i^*y_i^k+y_n^k } = 0. 
\end{align}
Since $ \nabla \varphi_k = 0 $, from \eqref{eq:car.varphi}, \eqref{eq:grad.phi_k} 
and \eqref{eq:lualua}, we obtain $\sum_{i=1}^k\varepsilon_i^k = -\varphi_k(\widetilde p)\leq 0$. So, 
$\varepsilon_i^k = 0$ (for all $i=1, \dots, n$), which gives the desired result as a consequence of (i).
\end{proof}


\mgap

\begin{algorithm}
\caption{A strongly convergent inertial inexact PS algorithm for solving  \eqref{eq:probr}.}
\SetAlgoLined
\KwInput{$(z^{-1}, w^{-1}_1, \ldots, w^{-1}_{n-1})  = (z^{0}, w^0_1, \ldots, w^0_{n-1})\in \boldsymbol{\Ha}$, scalars $0 \leq  \sigma < 1$ and  $\gamma > 0$.}
\For{ $k=0,1,\ldots$}{ 
Choose $\alpha_k \geq 0$, $\beta_k \geq 0$ and let
\begin{align}
\label{eq:ext.01z}
&\widehat z^{\,k} = z^k+\alpha_k(z^k-z^{k-1}), \quad \widetilde{z}^{\, k} = \widehat{z}^{k} + \beta_k (\widehat{z}^{\, k} - z^0 ),\\[3mm]
\label{eq:ext.02z}
&\widehat{w}_i^{\,k} = w_i^{k} + \alpha_k (w_i^k - w_i^{k-1}), \quad  \widetilde{w}^{\, k}_{i} = \widehat{w}^{\,k}_{i} + \beta_k (\widehat{w}^{\,k}_i - w^{0}_i) \quad (i =1, \ldots, n-1), \\[3mm]
\label{eq:ext.03z}
& \widetilde w_n^{\,k} = -\sum_{i=1}^{n-1}G_i^*\widetilde w_i^{\,k}.
\end{align}
For $i = 1, \ldots, n$, choose scalars $\lambda_i^k > 0$ and compute $(x_i^k, y_i^k, \varepsilon_i^k)\in \Ha\times \Ha\times \R_{+}$ such that
\begin{align}
\begin{aligned}\label{eq:proxT_i}
&y_i^k\in T^{[\varepsilon_{i}^{k}]}_i(x_i^k),\\[1mm]
&\normq{\lambda_i^k y_i^k + x_i^k - (G_i \widetilde{z}^{\,k} + \lambda_i^k  \widetilde{w}_i^{\,k})}  + 2\lambda_i^{k}\varepsilon_{i}^{k}\leq \sigma^2\left(\Norm{G_i \widetilde{z}^k-x_i^k}^2+\Norm{\lambda_i^k( \widetilde{w}_i^k-y_i^k)}^2\right).
\end{aligned}
\end{align}
\If{$x_i^k = G_i x_n^k$ \emph{(}$i=1, \ldots, n-1$\emph{)} \emph{and} 
$\displaystyle \sum_{i=1}^{n-1}G_i^*y_i^k  + y_n^k = 0$}
 	{\Return 
 	\begin{align} \lab{eq:return}
 	(\overline z, \overline w_1, \ldots , \overline w_{n-1}) \coloneqq (x_n^k, y_1^k, \dots, y_{n-1}^k).
 	\end{align}}
Let the affine functional $\varphi_k(\cdot)$ be as in \eqref{eq:sep.F} (see also \eqref{eq:car.varphi}),
let
\begin{align}\label{def:pk}
p^{k} \coloneqq (z^{k}, w_1^{k}, \ldots, w_{n-1}^{k}),
\end{align}
set
\begin{align}\label{def:Hk-Wk}
\begin{aligned}
& H_k =\left\{p\in \boldsymbol{\Ha} \mid \varphi_{k}(p)\leq 0\right\},\\
& W_k =\left\{p\in \boldsymbol{\Ha} \mid \inner{p^0-p^k}{p-p^k}_{\gamma}\leq 0\right\},
\end{aligned}
\end{align}
and compute the next iterate
\begin{align}\label{eq:updt01}
(z^{k+1}, w_1^{k+1}, \ldots , w_{n-1}^{k+1}) = P_{H_k \cap W_k}(p^0).
\end{align}
}
\label{alg:iHPEP}
\end{algorithm}

\mgap
\mgap

\noindent
{\bf Additional notation.}  In order to facilitate the analysis of Algorithm \ref{alg:iHPEP}, we have to introduce some additional notation. Similarly to the definition of $\widetilde w_n^k$ as in \eqref{eq:ext.03z}, 
for the sequences $(w_i^k)$ and $(\widehat w_i^k)$ ($1\leq i \leq n$), we define
\begin{align} \lab{def:wwhat}
w_n^{\,k} \coloneqq -\sum_{i=1}^{n-1}G_i^*w_i^{\,k}\quad \mbox{and}\quad  \widehat w_n^{\,k} \coloneqq -\sum_{i=1}^{n-1}G_i^*\widehat w_i^{\,k}.
\end{align}
Also, similarly to the notation for $p^k$ as in \eqref{def:pk},
\begin{align}\label{eq:indetityp}
\wh p^{\,k} \coloneqq (\wh z^{\,k}, \wh w_1^{\,k}, \ldots , \wh w_{n-1}^{\,k}) \quad \mbox{and} \quad
 \wti p^{\,k} \coloneqq  (\wti z^{\,k}, \wti w_1^{\,k}, \ldots , \wti w_{n-1}^{\,k}).
\end{align}
From \eqref{eq:ext.01z}, \eqref{eq:ext.02z}, \eqref{def:pk} and \eqref{eq:indetityp}, we have
\begin{align*}
\wh p^{\,k} = p^k + \alpha_k (p^k - p^{k-1}) \quad \mbox{and} \quad 
\wti p^{\,k} = \wh p^k + \beta_k (\wh p^k - p^0).
\end{align*}
Furthermore, direct substitution of the first identity in \eqref{eq:ext.02z} into the second
identity in \eqref{def:wwhat} and of the second identity in \eqref{eq:ext.02z} into 
\eqref{eq:ext.03z} yields
\begin{align}\label{eq:identityw}
\wh w_n^{\,k} = w^{k}_n + \alpha_k ( w^{k}_n -  w_n^{k-1}) \quad \mbox{and} \quad
\wti w_n^{\,k} = \wh w^{\,k}_n + \beta_k (\wh w^{\,k}_n -  w_n^{0}).
\end{align}
Whenever it is convenient, we will also use the notation
\begin{align} \lab{def:ei}
e_i^k \coloneqq \lambda_i^k y_i^k + x_i^k - (G_i \widetilde{z}^{\,k} + \lambda_i^k  \widetilde{w}_i^{\,k}),
\quad i = 1, \dots, n.
\end{align}
This implies that  the inequality in \eqref{eq:proxT_i} now reads as
\begin{align} \lab{eq:salg}
\normq{e_i^k}  + 2\lambda_i^{k}\varepsilon_{i}^{k}\leq \sigma^2\left(\Norm{G_i \widetilde{z}^k-x_i^k}^2+\Norm{\lambda_i^k( \widetilde{w}_i^k-y_i^k)}^2\right).
\end{align}
Regarding the just introduced notation, we also observe that, since $G_n$ is assumed to be equal the identity operator (see Assumption A2), definition \eqref{eq:ext.03z} gives us
\begin{align}\lab{eq:sumwtz}
\sum_{i=1}^n G_i^*\widetilde w_i^k = 0.
\end{align}

\mgap

\noindent
Next we make some remarks and comments about Algorithm \ref{alg:iHPEP}:

\begin{enumerate}[label = (\roman*)]
\item Algorithm \ref{alg:iHPEP} operates on the augmented space $\HHH = \HH_0\times \HH_1\times \dots\times \HH_{n-1}$; recall that the extended-solution set $\Sa_e$ is a subset of $\HHH$ and our main problem \eqref{eq:probr} is equivalent to the (convex feasibility) problem of finding a point $p = (z, w_1, \dots, w_{n-1})$ in $\Sa_e$. Also, recall that, by 
Lemma \ref{lem:grad.sep}\emph{\ref{lem:grad.sep.ter}}, the set $\Sa_e$ is a subset of the ``negative'' half-space defined by each affine functional $\varphi_k(\cdot)$ (see the definition of $H_k$ as in \eqref{def:Hk-Wk}). This explains the ``projective'' nature of Algorithm \ref{alg:iHPEP}, where the iteration is defined, as in \eqref{eq:updt01}, by successive projections on the intersection of the half-spaces $H_k$ and $W_k$. The main role of the latter half-space is to enforce the strong convergence of 
the sequences generated Algorithm \ref{alg:iHPEP} (see Theorem \ref{th:sconv} below).
\item The extrapolation steps defined in \eqref{eq:ext.01z}--\eqref{eq:ext.03z} promote \emph{inertial effects} on the iterations; these inertial effects are controlled by the parameters $\alpha_k$ and $\beta_k$.
We note that while the first extrapolation represented by $\wh z^k$ and $\wh w_i^k$  ($i = 1, \dots, n-1$) 
corresponds to the more typical type of extrapolation present in many inertial-based algorithms 
(see, e.g., \cite{AA01, Alv+20, AM20, AC20, AP19}), the second type 
$\wti z^k$ and $\wti w_i^k$ ($i=1, \dots, n$) is introduced by considering the particular iteration mechanism of Algorithm 1. 
Conditions on sequences $\seq{\alpha_k}$ and $\seq{\beta_k}$ to ensure the strong convergence and iteration-complexity of Algorithm \ref{alg:iHPEP} will be given in Assumption \ref{assu:ab} 
(see also Assumption \ref{assu:lambda}).
\item  At each iteration, Algorithm \ref{alg:iHPEP} demands the approximate solution of $n$ prox-subproblems, as in \eqref{eq:proxT_i}. These subproblems can be independently solved in parallel and inexactly within a relative-error criterion that is controlled by the parameter $\sigma\in [0, 1)$.  Note that if $\sigma = 0$, then, by 
\eqref{eq:proxT_i},  we have $\varepsilon_i^k = 0$ and 
$\lambda_i^k y_i^k + x_i^k - (G_i \widetilde{z}^{\,k} + \lambda_i^k  \widetilde{w}_i^{\,k}) = 0$  for $i=1, \dots, n$. So, $y_i^k\in T_i(x_i^k)$ and 
$\lambda_i^k y_i^k + x_i^k = G_i \widetilde{z}^{\,k} + \lambda_i^k  \widetilde{w}_i^{\,k}$
for all $i=1, \dots, n$ (recall that $T^{[0]} = T$). This means that, for a given $\lambda_i^k > 0$, the triple 
$(x_i^k, y_i^k, \varepsilon_i^k)$ defined as
\begin{align} \lab{eq:arflo}
x_i^k \coloneqq G_i \widetilde{z}^{\,k} + \lambda_i^k  \widetilde{w}_i^{\,k} - \lambda_i^k y_i^k, \qquad
y_i^k \coloneqq (\lambda_i^k T_i + I)^{-1}(G_i \widetilde{z}^{\,k} + \lambda_i^k  \widetilde{w}_i^{\,k}),
\qquad \varepsilon_i^k \coloneqq 0
\end{align}
always satisfies the error criterion \eqref{eq:proxT_i}. The main limitation of \eqref{eq:arflo} is the computation of the exact resolvent $(\lambda_i^k T_i + I)^{-1}$ of each maximal monotone operator $T_i$, which may be 
expensive from a computational viewpoint. The relative-error criterion \eqref{eq:proxT_i} overcomes this limitation by allowing the computation of inexact solutions $(x_i^k, y_i^k, \varepsilon_i^k)$, which in practical situations can be achieved by some other ``inner solver'', depending on the specific structure of $T_i$ (the inequality in 
\eqref{eq:proxT_i} being used as a stopping criterion for the ``inner solver'');
see~\cite{Alv+20} for a practical implementation of a similar relative-error criterion in the context of the ADMM splitting algorithm).  
We also mention that \eqref{eq:proxT_i} is a generalization of a relative-error criterion for an inertial PS algorithm that appeared in~\cite{AGM24}. 
In Section \ref{sec:FB-TFB}, we will show how Algorithm \ref{alg:iHPEP} can be used as a framework for the design of new operator splitting methods, where a triple $(x_i^k, y_i^k, \varepsilon_i^k)$ satisfying \eqref{eq:proxT_i} can be computed directly by forward-backward/forward-backward-forward (Tseng) steps, under assumptions of cocoercivity/Lipschitz continuity on the operator $T_i$.
\item In Corollary \ref{cor:airflo}, we will show that if Algorithm \ref{alg:iHPEP} 
stops (see \eqref{eq:return}), then it follows that $(\overline z, \overline w_1, \dots, \overline w_{n-1})$ belongs to the extended-solution set $\Sa_e$; subsequently, in order to analyze the convergence of Algorithm \ref{alg:iHPEP}, we will assume that it generates ``infinite'' sequences (see Remark \ref{rem:620} below).
\item The definition of the next iterate $(z^{k+1}, w_1^{k+1}, \dots, w_{n-1}^{k+1})$ as in \eqref{eq:updt01} by the orthogonal projection of $p^0$ onto the intersection $H_k \cap W_k$ of the half-spaces $H_k$ and $W_k$ is one of the main ingredients to guarantee the strong convergence of Algorithm \ref{alg:iHPEP} (note that $H_k$ is a half-space in $\HHH$ by Lemma \ref{lem:grad.sep}\emph{\ref{lem:grad.sep.seg}}). In the context of inexact relative-error proximal-point methods, this idea traces back to~\cite{SS00} and was also recently 
exploited in~\cite{ACM24} (in the context of inertial methods). The computation of the orthogonal projection
in \eqref{eq:updt01} reduces (at most) to the solution of a $2\times 2$ linear 
system (see \cite[pp. 195 and 196]{SS00} and \cite[Propositions 28.18 and 28.19]{BC11}). Note also that the well-definiteness of the next iterate $(z^{k+1}, w_1^{k+1}, \ldots , w_{n-1}^{k+1})$ as in \eqref{eq:updt01} depends on the nonemptyness of the (closed and convex) set 
$H_k \cap W_k$. This will be discussed in more details in Appendix \ref{app:well} below (see Proposition \ref{pro:inclutionset}).
\item  It is worth mentioning here that the implementation as well as the guarantees of convergence of Algorithm \ref{alg:iHPEP} are independent of the computation of norms and inverses of the bounded linear operators $G_i$ $(1\leq i\leq n)$.
\end{enumerate}

\subsection{Convergence analysis of Algorithm \ref{alg:iHPEP}} 
\label{subsec:conv}

In this subsection, we study the strong convergence properties of Algorithm \ref{alg:iHPEP}. The main result is Theorem \ref{th:sconv} below. Other auxiliary results (Lemmas and Propositions) will also be stated and proved.
Assumptions \ref{assu:lambda} and \ref{assu:ab} (on the sequences of proximal
$\seq{\lambda_i^k}$ $(1\leq i\leq n)$ and inertial $\seq{\alpha_k}$
and $\seq{\beta_k}$ parameters, respectively) will be required in order to prove the convergence of 
Algorithm \ref{alg:iHPEP}.

\mgap

We begin with the following technical lemma:

\begin{lemma}\label{lem:varphi00}
Suppose iteration $k\geq 0$ is reached in \emph{Algorithm \ref{alg:iHPEP}} and 
let $\seq{\widetilde p^k}$ and $\varphi_k(\cdot)$ be as in \eqref{eq:indetityp} and \eqref{eq:sep.F}, respectively.
The following holds:
\begin{enumerate}[label = \emph{(\alph*)}]
\item \label{lem:varphi00.seg} 
For all $i=1, \ldots,n$,
\begin{align}\label{ineq:eps_com}
\inner{ G_i \wti z^{\,k} - x_i^k }{ y^k_i - \wti w^{\, k}_i} - \varepsilon^{k}_i
\geq  \frac{1 - \sigma^2}{2 \lambda^k_i} \left ( \normq{G_i \wti z^{\, k} - x^k_i} + \normq{\lambda^k_i (\wti w^{\,k}_i -y^k_i)} \right).
\end{align}
\item \label{lem:varphi00.ter}
We have
\begin{align*}
\varphi_k(\wti p^{\, k}) \geq \dfrac{1 - \sigma^2}{2}\sum_{i=1}^{n} \left( \dfrac{1}{\lambda_i^k}\normq{G_i\wti z^{\, k} - x_i^k } + \lambda_i^k\normq{\wti w^{\,k}_i - y^k_i}  \right)\geq 0.
\end{align*}
\end{enumerate}
\end{lemma}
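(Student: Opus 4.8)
The plan is to derive part \ref{lem:varphi00.seg} directly from the relative-error inequality \eqref{eq:salg}, and then to obtain part \ref{lem:varphi00.ter} simply by evaluating $\varphi_k(\cdot)$ at $\wti p^{\,k}$ and summing the resulting $n$ inequalities.

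For part \ref{lem:varphi00.seg}, the key algebraic observation is that the error term $e_i^k$ of \eqref{def:ei} rearranges as $e_i^k = -\big[(G_i\wti z^{\,k}-x_i^k) + \lambda_i^k(\wti w_i^k - y_i^k)\big]$. Writing $a \coloneqq G_i\wti z^{\,k}-x_i^k$ and $b \coloneqq \lambda_i^k(\wti w_i^k - y_i^k)$, so that $e_i^k = -(a+b)$ and $\normq{e_i^k} = \normq{a} + 2\inner{a}{b} + \normq{b}$, I would substitute this expansion into \eqref{eq:salg} and cancel the $\sigma^2$-terms to reach $-2\inner{a}{b} - 2\lambda_i^k\varepsilon_i^k \geq (1-\sigma^2)(\normq{a}+\normq{b})$. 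Since $-\inner{a}{b} = \lambda_i^k\inner{G_i\wti z^{\,k}-x_i^k}{y_i^k-\wti w_i^k}$, dividing through by $2\lambda_i^k > 0$ yields exactly \eqref{ineq:eps_com}.

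For part \ref{lem:varphi00.ter}, I would evaluate the separator \eqref{eq:sep.F} at $p = \wti p^{\,k}$. Because $G_n = I$ and the $n$-th entry $\wti w_n^{\,k}$ is, by \eqref{eq:ext.03z}, precisely the quantity $-\sum_{i=1}^{n-1}G_i^*\wti w_i^{\,k}$ used implicitly in \eqref{eq:sep.F}, the definition collapses to $\varphi_k(\wti p^{\,k}) = \sum_{i=1}^n\big(\inner{G_i\wti z^{\,k}-x_i^k}{y_i^k-\wti w_i^k} - \varepsilon_i^k\big)$; each summand is thus the left-hand side of \eqref{ineq:eps_com}. Summing the $n$ inequalities from part \ref{lem:varphi00.seg} and using $\tfrac{1}{\lambda_i^k}\normq{\lambda_i^k(\wti w_i^k - y_i^k)} = \lambda_i^k\normq{\wti w_i^k - y_i^k}$ gives the stated lower bound, and the final ``$\geq 0$'' follows because $\sigma \in [0,1)$ makes $1-\sigma^2 > 0$ while each summand is a sum of squared norms.

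I do not expect a substantive obstacle: both parts are purely algebraic. The only point needing mild care is the bookkeeping in part \ref{lem:varphi00.ter}---verifying that $\varphi_k(\wti p^{\,k})$ genuinely reduces to the sum of the part-\ref{lem:varphi00.seg} expressions, which rests on the compatibility of the convention $w_n = -\sum_{i=1}^{n-1}G_i^*w_i$ built into \eqref{eq:sep.F} with the definition of $\wti w_n^{\,k}$ in \eqref{eq:ext.03z}.
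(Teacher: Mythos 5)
Your proposal is correct and follows essentially the same route as the paper: part (a) via the expansion of $\normq{e_i^k}$ (the paper uses the equivalent identity $\inner{a}{b} = \tfrac12(\normq{a}+\normq{b}-\normq{a-b})$ with the opposite sign convention for $b$) combined with the error criterion \eqref{eq:salg}, and part (b) by evaluating $\varphi_k$ at $\wti p^{\,k}$ and summing the $n$ inequalities, with the compatibility of \eqref{eq:ext.03z} with the convention \eqref{eq:defWn} being exactly the bookkeeping point the paper implicitly relies on.
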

\begin{proof}
\emph{\ref{lem:varphi00.seg}} Using the identity $\inner{a}{b} = (1/2) (\normq{a} + \normq{b} - \normq{a-b})$ with $a = G_i \wti z^{\,k} - x^k_i$ and $b = \lambda^k_i (y_i^k - \wti w^{\,k}_i)$, we obtain
\begin{align*}
\inner{ G_i \wti z^{\,k} - x_i^k }{ \lambda^{k}_i(y^k_i - \wti w^{\, k}_i)} = \dfrac{1}{2} \Big( \normq{ G_i\wti z^{\, k}- x^k_i } + \normq{\lambda^{k}_i(y^k_i - \wti w^{\, k}_i)}- \normq{ \underbrace{G_i \wti z^{\,k} -x_i^k  - \lambda^{k}_i(y^k_i - \wti w^{\, k}_i)}_{-e^k_i} }\Big),
\end{align*}
where $e_i^k$ is as in \eqref{def:ei}. Subtracting $\varepsilon_i^k$ in both sides of the latter identity and using the error criterion (inequality) in \eqref{eq:proxT_i} we then find (see also \eqref{eq:salg})
\begin{align*}
\notag
\inner{ G_i \wti z^{\,k} - x_i^k }{ y^k_i - \wti w^{\, k}_i} - \varepsilon^{k}_i = & \dfrac{1}{2 \lambda^{k}_i} 
\left( \normq{ G_i \wti z^{\, k} - x_i^k } + \normq{\lambda^{k}_i (y^k_i - \wti w^{\, k}_i)} - 
\left[\normq{ e^k_i } + 2 \lambda^k_i \varepsilon^k_i\right] \right) \\
\geq & \frac{1- \sigma^2}{2 \lambda^k_i} \left ( \normq{G_i \wti z^{\, k} - x^k_i} + \normq{\lambda^k_i (\wti w^{\,k}_i -y^k_i)} \right),
\end{align*}
which finishes the proof of item \emph{\ref{lem:varphi00.seg}}.

\mgap

\emph{\ref{lem:varphi00.ter}} This follows by summing up the inequality \eqref{ineq:eps_com} for $i =1,\ldots,n$ and using the definitions of 
$\varphi_k(\cdot)$ and $\widetilde p^k$ (see Eqs. \eqref{eq:sep.F} and \eqref{eq:indetityp}).
\end{proof}

\mgap

Next we show that if Algorithm \ref{alg:iHPEP} stops, then it stops at a solution of problem \eqref{eq:probr}:

\bcoro \lab{cor:airflo}
If \emph{Algorithm \ref{alg:iHPEP}} returns $(\overline z, \overline w_1, \dots, \overline w_{n-1})$ as in 
\eqref{eq:return}, then $(\overline z, \overline w_1, \dots, \overline w_{n-1})$ belongs to the extended-solution set
$\Sa_e$. In particular, $\overline z$ is a solution of problem \eqref{eq:probr}.
\ecoro
\bproo
The proof is a direct consequence of Lemma \ref{lem:varphi00}\emph{\ref{lem:varphi00.ter}} and
items  \emph{\ref{lem:grad.sep.qui}} and \emph{\ref{lem:grad.sep.sex}}(ii) of 
Lemma \ref{lem:grad.sep}.
\eproo

\mgap

\brema \lab{rem:620}
\emph{
In view of Corollary \ref{cor:airflo} and Proposition \ref{pro:inclutionset} below, from now on in this section we assume that Algorithm \ref{alg:iHPEP}  doesn't stop, in this way generating ``infinite'' sequences. In particular, from now on in this section, by Lemma \ref{lem:grad.sep}\emph{\ref{lem:grad.sep.qui}}, we have
\begin{align} \lab{eq:620}
 \nabla \varphi_k \neq 0 \; \mbox{ for all } \;  k\geq 0.
\end{align}
}
\erema

\mgap

\begin{proposition}\label{pro:inva}
Let $\seq{p^{k}}$ be generated by \emph{Algorithm \ref{alg:iHPEP}} and let $d_{0,\gamma}$ denote the distance of $p^0$ to the extended-solution set $\Sa_e$ of \eqref{eq:probr}. The following statements hold:
\begin{enumerate}[label = \emph{(\alph*)}]
\item \lab{pro:inva.a}For all $k \geq 0$, 
\begin{align*}
\normq{p^{k+1} - p ^0 }_{\gamma}  \geq \normq{p^k - p^0}_{\gamma} + \normq{p^{k+1} - p^k}_{\gamma}.
\end{align*}
\item \lab{pro:inva.b} 
For all $k \geq 0$,
\begin{align*}
\norm{p^k - p^0}_{\gamma} \leq d_{0,\gamma}.
\end{align*}
\item \lab{pro:inva.c} We have
\begin{align*}
\sum_{k=0}^\infty \normq{p^{k+1} - p^k}_{\gamma} \leq d_{0,\gamma}^2.
\end{align*}
\end{enumerate}
\end{proposition}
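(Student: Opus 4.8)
The plan is to prove the three items in order; item \emph{\ref{pro:inva.b}} is the crux, while \emph{\ref{pro:inva.a}} and \emph{\ref{pro:inva.c}} follow from it by elementary manipulations.

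For item \emph{\ref{pro:inva.a}}, I would exploit that $p^{k+1} = P_{H_k \cap W_k}(p^0)$ lies in $W_k$, so by the definition of $W_k$ in \eqref{def:Hk-Wk} one has $\inner{p^0 - p^k}{p^{k+1} - p^k}_{\gamma} \leq 0$. Writing $p^{k+1} - p^0 = (p^{k+1} - p^k) - (p^0 - p^k)$ and expanding the squared $\gamma$-norm, the cross term equals $-2\inner{p^0 - p^k}{p^{k+1} - p^k}_{\gamma} \geq 0$ and may therefore be discarded, which yields item \emph{\ref{pro:inva.a}} at once.

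For item \emph{\ref{pro:inva.b}}, the key is to establish by induction on $k$ that $\Sa_e \subseteq H_k \cap W_k$ for all $k \geq 0$. The base case holds because $W_0 = \HHH$ (its defining inequality reduces to $0 \leq 0$) and $\Sa_e \subseteq H_0$ by Lemma \ref{lem:grad.sep}\emph{\ref{lem:grad.sep.ter}}. For the inductive step, assume $\Sa_e \subseteq H_{k-1} \cap W_{k-1}$. Since $p^k = P_{H_{k-1}\cap W_{k-1}}(p^0)$ by \eqref{eq:updt01} and \eqref{def:pk}, the projection inequality \eqref{eq:pcr} (applied in $\HHH$ with the inner product $\inner{\cdot}{\cdot}_{\gamma}$) gives $\inner{p^0 - p^k}{z - p^k}_{\gamma} \leq 0$ for every $z \in H_{k-1} \cap W_{k-1}$, which is precisely the condition $z \in W_k$. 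Hence $H_{k-1} \cap W_{k-1} \subseteq W_k$, so $\Sa_e \subseteq W_k$ by the inductive hypothesis; together with $\Sa_e \subseteq H_k$ (Lemma \ref{lem:grad.sep}\emph{\ref{lem:grad.sep.ter}}) this gives $\Sa_e \subseteq H_k \cap W_k$. With the inclusion in hand, item \emph{\ref{pro:inva.b}} follows since $p^{k+1}$ is the $\norm{\cdot}_\gamma$-nearest point of $H_k \cap W_k$ to $p^0$: taking the infimum of $\norm{p - p^0}_\gamma$ over $p \in \Sa_e \subseteq H_k \cap W_k$ yields $\norm{p^{k+1} - p^0}_\gamma \leq d_{0,\gamma}$ (the case $k = 0$ being trivial).

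For item \emph{\ref{pro:inva.c}}, I would telescope: rearranging item \emph{\ref{pro:inva.a}} as $\normq{p^{k+1} - p^k}_\gamma \leq \normq{p^{k+1} - p^0}_\gamma - \normq{p^k - p^0}_\gamma$ and summing over $k = 0, \ldots, N$ gives $\sum_{k=0}^N \normq{p^{k+1} - p^k}_\gamma \leq \normq{p^{N+1} - p^0}_\gamma \leq d_{0,\gamma}^2$ by item \emph{\ref{pro:inva.b}}; letting $N \to \infty$ concludes the proof. The only genuine obstacle is the inductive argument in \emph{\ref{pro:inva.b}}, namely recognizing that the projection characterization \eqref{eq:pcr} applied at step $k-1$ is exactly the defining inequality of $W_k$, which is what propagates the inclusion $\Sa_e \subseteq H_k \cap W_k$ across iterations; everything else is routine expansion and telescoping.
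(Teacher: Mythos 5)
Your proof is correct, and its skeleton --- item \emph{(a)} from membership of $p^{k+1}$ in $W_k$, item \emph{(b)} from the invariance $\Sa_e \subseteq H_k \cap W_k$ plus the minimizing property of the projection, item \emph{(c)} by telescoping \emph{(a)} against the bound in \emph{(b)} --- matches the paper's. The differences lie in execution. For \emph{(a)}, the paper applies the projection inequality \eqref{eq:pne} of Lemma \ref{lm:proj} with $C=W_k$, $x=p^{k+1}$, $y=p^0$, which additionally requires identifying $P_{W_k}(p^0)=p^k$; your direct expansion of $\normq{p^{k+1}-p^0}_\gamma$, discarding the cross term whose sign is controlled by the defining inequality of $W_k$, is more elementary and avoids that identification entirely. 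For \emph{(b)}, the paper outsources the inclusion $\Sa_e\subseteq H_{k-1}\cap W_{k-1}$ (and the resulting well-definedness of the iterates) to Proposition \ref{pro:inclutionset}, proved by induction in Appendix \ref{app:well}; your inline induction is essentially that appendix proof --- the key step, that \eqref{eq:pcr} applied to $p^k = P_{H_{k-1}\cap W_{k-1}}(p^0)$ says precisely $H_{k-1}\cap W_{k-1}\subseteq W_k$, is the same observation --- so you have in effect reproved the appendix result rather than found a new route, which is the natural outcome of writing the proof blind. One point worth making explicit in your write-up: the induction should also record that $H_k\cap W_k$ is nonempty (it contains $\Sa_e$, which is nonempty by Assumption A3), closed and convex, so that the projection defining $p^{k+1}$ exists at every step; your argument delivers this, but you never state it.
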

\begin{proof}
\emph{\ref{pro:inva.a}} Using Eq. \eqref{eq:pne} in Lemma \ref{lm:proj} (in the space $\HHH$ with the inner product as in \eqref{eq:def.inner}) with $C = W_k$, $x = p^{k+1}$ and $y = p^0$ we obtain
\begin{align*}
\normq{P_{W_k}(p^{k+1}) - P_{W_k}(p^0)}_{\gamma} \leq 
\normq{p^{k+1} - p^0}_{\gamma} - \normq{[\,p^{k+1} - P_{W_k}(p^{k+1})\,] - [\,p^0 - P_{W_k}(p^0)\,]}_{\gamma},
\end{align*}
where $W_k$ is as in \eqref{def:Hk-Wk}. Since $p^{k+1} =P_{W_{k}}(p^{k+1})$, because 
$p^{k+1}\in W_k$ (see \eqref{def:pk} and \eqref{eq:updt01}), and 
$p^k =  P_{W_k}(p^0)$ (see the definition of $W_k$), the latter inequality becomes
\begin{align*}
\normq{p^{k+1} - p^k}_{\gamma} \leq \normq{p^{k+1} - p^0}_{\gamma} - \normq{p^k - p^0}_{\gamma},
\end{align*}
which finishes the proof of item (a).

\mgap

\emph{\ref{pro:inva.b}} Assume that $k\geq 1$ (for $k = 0$ the result is trivial). 
From \eqref{def:pk} and \eqref{eq:updt01} we have $p^k = P_{H_{k-1}\cap W_{k-1}}(p^0)$. Now using Proposition \ref{pro:inclutionset} and the definition of the projection
$P_{H_{k-1} \cap W_{k-1}}(p^0)$ we obtain
\begin{align*}
\norm{p^k - p^0}_{\gamma}  = \norm{P_{H_{k-1} \cap W_{k-1}}(p^0) - p^0}_{\gamma} \leq \norm{P_{\Sa_e} (p^0) -p^0}_{\gamma} = d_{0,\gamma}.
\end{align*}

\mgap

\emph{\ref{pro:inva.c}}
This follows directly from items \emph{\ref{pro:inva.a}} and \emph{\ref{pro:inva.b}} combined with a telescopic sum argument.
\end{proof}

\mgap
\mgap

Next we make a boundedness assumption on the sequences of proximal parameters $\seq{\lambda_i^k}$ generated by Algorithm \ref{alg:iHPEP}:

\bassu \lab{assu:lambda}
For all $i =1, \dots, n$, the sequence $\seq{\lambda_i^k}$ generated by \emph{Algorithm \ref{alg:iHPEP}} satisfies 
\begin{align*}
\lambda_i^k\in [\,\underline \lambda\,, \overline \lambda\,] \quad \emph{for all}\quad k\geq 0,
\end{align*}
where $\overline \lambda \geq \underline \lambda >0$.
\eassu

\mgap

\begin{lemma}\label{lem:varphi}
Consider the sequences evolved by \emph{Algorithm \ref{alg:iHPEP}} and let $\seq{\wti p^{\, k}}$ 
be as in \eqref{eq:indetityp}. Also, let $\varphi_k(\cdot)$ be as in \eqref{eq:sep.F}. The following statements hold:
\begin{enumerate}[label = \emph{(\alph*)}]
\item \lab{lem:varphi.seg} For all $k \geq 0$,
\begin{align}\label{varphi}
\varphi_k(\wti p^{\, k}) \geq \frac{(1 - \sigma^2)\min \{\underline{\lambda}, 1/\overline{\lambda}\}}{2} 
\sum_{i=1}^{n} \left ( \normq{G_i\wti z^{\, k} - x_i^k } + \normq{\wti w^{\,k}_i - y^k_i}  \right).
\end{align}
\item \lab{lem:varphi.ter}  For all $k \geq 0$,
\begin{align}\label{ine:gradient}
\varphi_k (\wti p^{\,k}) \geq \dfrac{1}{c} \normq{\nabla \varphi_k}_{\gamma}
\end{align}
where
\begin{align}\label{eq:constantc}
c  \coloneqq n \max_{i =1, \ldots,n}\normq{G_i} \left( \dfrac{4 \max \{1, 1/\gamma\}}{(1-\sigma^2) \min \{\underline{\lambda}, 1/\overline{\lambda}\}}\right).
\end{align}
\end{enumerate}
\end{lemma}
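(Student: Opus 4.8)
The plan is to obtain part \emph{(a)} immediately from the lower bound already established in Lemma~\ref{lem:varphi00}\emph{\ref{lem:varphi00.ter}}, and then to deduce part \emph{(b)} by bounding $\normq{\nabla\varphi_k}_\gamma$ from above by a multiple of the sum appearing on the right-hand side of \eqref{varphi}.

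For part \emph{(a)}, I would begin from
\[
\varphi_k(\wti p^{\,k}) \geq \frac{1-\sigma^2}{2}\sum_{i=1}^n\left(\frac{1}{\lambda_i^k}\normq{G_i\wti z^{\,k} - x_i^k} + \lambda_i^k\normq{\wti w_i^k - y_i^k}\right),
\]
which is Lemma~\ref{lem:varphi00}\emph{\ref{lem:varphi00.ter}}, and use Assumption~\ref{assu:lambda} to bound $1/\lambda_i^k \geq 1/\overline\lambda$ and $\lambda_i^k \geq \underline\lambda$ termwise. Bounding both coefficients below by $\min\{\underline\lambda, 1/\overline\lambda\}$ and pulling it out of the sum yields \eqref{varphi} directly; no further work is needed here.

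For part \emph{(b)}, I would start from the closed form of $\normq{\nabla\varphi_k}_\gamma$ in Lemma~\ref{lem:grad.sep}\emph{\ref{lem:grad.sep.qua}}, namely the sum of a ``dual'' term $\gamma^{-1}\norm{\sum_{i=1}^{n-1}G_i^*y_i^k + y_n^k}^2$ and a ``primal'' term $\sum_{i=1}^{n-1}\normq{x_i^k - G_i x_n^k}$, and estimate each separately. Writing $M \coloneqq \max_{i=1,\ldots,n}\normq{G_i}$, the plan for the dual term is to use $G_n = I$ together with \eqref{eq:sumwtz} to rewrite
\[
\sum_{i=1}^{n-1}G_i^*y_i^k + y_n^k = \sum_{i=1}^n G_i^*\seq{y_i^k - \wti w_i^k},
\]
after which \eqref{eq:ineq.norm} and $\norm{G_i^*} = \norm{G_i}$ give the bound $nM\sum_{i=1}^n\normq{\wti w_i^k - y_i^k}$. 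For the primal term, I would decompose $x_i^k - G_i x_n^k = -\seq{G_i\wti z^{\,k} - x_i^k} + G_i\seq{G_n\wti z^{\,k} - x_n^k}$ (again using $G_n = I$), apply \eqref{eq:ineq.norm} with $n = 2$, and sum over $i = 1,\ldots,n-1$ to reach a bound of the form $2\seq{1 + (n-1)M}\sum_{i=1}^n\normq{G_i\wti z^{\,k} - x_i^k}$.

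The key observation making the constants match is that $G_n = I$ forces $M \geq 1$, so $1 + (n-1)M \leq nM$; combining the two estimates, and using $\gamma^{-1} \leq \max\{1, 1/\gamma\}$ together with $1 \leq \max\{1, 1/\gamma\}$, would give
\[
\normq{\nabla\varphi_k}_\gamma \leq 2nM\max\{1, 1/\gamma\}\sum_{i=1}^n\seq{\normq{G_i\wti z^{\,k} - x_i^k} + \normq{\wti w_i^k - y_i^k}}.
\]
Finally, substituting the lower bound for the sum on the right coming from part \emph{(a)} yields $\normq{\nabla\varphi_k}_\gamma \leq c\,\varphi_k(\wti p^{\,k})$ with $c$ exactly as in \eqref{eq:constantc}, which is \eqref{ine:gradient}. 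I expect the main delicacy to be the bookkeeping that keeps the primal and dual contributions separate throughout: if the two sums $\sum\normq{G_i\wti z^{\,k} - x_i^k}$ and $\sum\normq{\wti w_i^k - y_i^k}$ are merged too early, the interplay between the factor $2$ from the primal decomposition, the factor $\gamma^{-1}$ from the dual term, and the inequality $M \geq 1$ no longer lines up cleanly with the stated constant $c$.
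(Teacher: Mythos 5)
Your proposal is correct and follows essentially the same route as the paper's proof: part \emph{(a)} is deduced from Lemma~\ref{lem:varphi00}\emph{\ref{lem:varphi00.ter}} and Assumption~\ref{assu:lambda} exactly as in the paper, and part \emph{(b)} uses the same two estimates (the dual term via \eqref{eq:sumwtz}, \eqref{eq:ineq.norm} and $\norm{G_i^*}=\norm{G_i}$, the primal term via the decomposition $x_i^k - G_i x_n^k = -(G_i\wti z^{\,k}-x_i^k)+G_i(\wti z^{\,k}-x_n^k)$) before invoking part \emph{(a)} to obtain the constant $c$. The only cosmetic difference is bookkeeping: you carry the factor $1+(n-1)M$ and use $M\geq 1$ to reach $nM$, whereas the paper absorbs $(n-1)M$ into $nM$ directly and then uses $nM\geq 1$; both yield the identical constant \eqref{eq:constantc}.
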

\begin{proof}
\emph{\ref{lem:varphi.seg}} This is a direct consequence of Lemma \ref{lem:varphi00}\emph{\ref{lem:varphi00.ter}} and
Assumption \ref{assu:lambda}.

\mgap

\emph{\ref{lem:varphi.ter}} Note first that, from \eqref{eq:sumwtz}, \eqref{eq:ineq.norm} and the fact that
$\norm{G_i} = \norm{G_i^\ast}$ ($1 \leq i \leq n$), we have

\begin{align}\label{ine:grad01}
\left \| \sum_{i=1}^{n} G_i^\ast y^k_i  \right \|^2 = \left \| \sum_{i=1}^{n} G_i^\ast(\wti w ^{\,k}_i - y_i^k)\right \|^2 \leq n  \max_{i=1, \ldots ,n} \normq{G_i} \sum_{i=1}^n \normq{\wti w^{\,k}_i - y_i^k}.
\end{align}

Likewise (recall that $G_n = I$),
\begin{align}\label{ine:grad02}
\nonumber
\sum_{i=1}^{n-1} \normq{ x_i^k  - G_ix_n^k} = & \sum_{i=1}^{n-1} \normq{x_i^k - G_i \wti z^{\, k} + G_i( \wti z^{\, k} -x_n^k)}\\
\nonumber
\leq & 2 \sum_{i=1}^{n-1} \left(  \normq{G_i \wti z^{\, k} - x_i^k} +  \normq{G_i (\wti z^{\, k} - x_n^k)} \right)\\
\nonumber
\leq & 2\left( \sum_{i=1}^{n-1}  \normq{G_i \wti z^{\, k} - x_i^k} + n \max_{i=1, \ldots , n} \normq{G_i} 
\normq{ \wti z^{\,k} - x_n^k}\right)\\
\leq & 2n  \max_{i=1, \ldots ,n}  \normq{G_i}  \sum_{i=1}^{n} \normq{G_i \wti z^{\, k} - x_i^k}.
\end{align}

Then, from \eqref{eq:norm.grad.pi_k},  \eqref{varphi}, \eqref{ine:grad01} and  \eqref{ine:grad02} we find
\begin{align*}
\normq{\nabla \varphi_k}_{\gamma}  & = \gamma^{-1} \left \| \sum_{i=1}^n G_i^\ast y_i^k\right \|^2 + \sum_{i=1}^{n-1} \Normq{x_i^k - G_i x_n^k} \\
& \leq 2n \max_{i=1, \ldots,n} \normq{G_i} \max\{1, 1/\gamma\} \sum_{i=1}^n \left( \normq{G_i\wti z^{\,k} - x_i^k} +  \normq{\wti w_i^{\,k} -y_i^k} \right)\\
& \leq  n \max_{i=1, \ldots ,n} \normq{G_i} \left( \dfrac{4 \max\{1, 1/\gamma\}}{(1-\sigma^2)\min\{\underline{\lambda}, 
1/\overline{\lambda}\}}\right) \varphi_k(\wti p^{\,k})\\[1mm]
& = c\, \varphi_k(\wti p^{\,k}),
\end{align*}
which is clearly equivalent to \eqref{ine:gradient}.
\end{proof}

\mgap

\blemm  \lab{lem:kyoto}
Consider the sequences generated by \emph{Algorithm \ref{alg:iHPEP}}, let $\varphi_k(\cdot)$ be as in \eqref{eq:sep.F}
and let the constant $c > 0$ be as in \eqref{eq:constantc}. Then, for all $k\geq 0$,
\begin{enumerate}[label = \emph{(\alph*)}]
\item \lab{lem:kyoto.seg}
$\norm{\nabla \varphi_k}_{\gamma} \leq c\norm{p ^{k+1} - \wti p^{\,k}}_\gamma \quad
\mbox{and} \quad \varphi_k (\wti p^{\,k}) \leq c\normq{p^{k+1} - \wti p^{\,k}}_{\gamma}.$
\item \label{lem:kyoto.ter}
$ \sum_{i=1}^n \varepsilon_i^k \leq \Big(\dfrac{\sigma^2}{1-\sigma^2}\Big) c\normq{p^{k+1} -  \wti p^{\,k}}_\gamma.$
\end{enumerate}
\elemm
\bproo
\emph{\ref{lem:kyoto.seg}} 
To prove the first inequality, recall we are assuming that $\nabla \varphi_k \neq 0$ (see \eqref{eq:620}). Then, as a consequence of the definition of $H_k$ as in \eqref{def:Hk-Wk} and Lemma \ref{lem:grad.sep}\emph{\ref{lem:grad.sep.seg}}, we have
\begin{align*}
P_{H_k}(p) = p - \dfrac{\max\{0, \varphi_k(p)\}}{\normq{\nabla \varphi_k}_\gamma} \nabla \varphi_k \quad \mbox{for all} \quad p\in \HHH.
\end{align*}
Now using that $p^{k+1} \in H_{k}$ (see \eqref{def:pk} and \eqref{eq:updt01}) and  Lemma \ref{lem:varphi}\emph{\ref{lem:varphi.ter}}, we obtain
\begin{align}
\nonumber
\norm{p^{k+1} - \wti p^{\,k}}_{\gamma}  & \geq \norm{P_{H_k}(\wti p^{\,^k}) - \wti p^{\,k}}_{\gamma}\\[2mm]
\lab{eq:kyoto}
& = \frac{\varphi_k (\wti p^{\,k})}{\norm{\nabla \varphi_k}}_{\gamma}\\[2mm]
\nonumber
& \geq  c^{-1}\norm{\nabla \varphi_k}_{\gamma},
\end{align}
which finishes the proof of the first inequality. To prove the second one, combine \eqref{eq:kyoto} and the first inequality. 

\emph{\ref{lem:kyoto.ter}} From the inequality in the error criterion \eqref{eq:proxT_i}, we obtain the bound, for all $i = 1, \dots, n$,
\[
 \varepsilon_{i}^{k}\leq \frac{\sigma^2}{2}\left(\frac{1}{\lambda_i^k}\norm{G_i \widetilde{z}^k-x_i^k}^2
 +\lambda_i^k\norm{\widetilde{w}_i^k - y_i^k}^2\right).
\]
Summing up the above bounds for $i = 1, \dots, n$ and 
using Lemma \ref{lem:varphi00}\emph{\ref{lem:varphi00.ter}} we find
\[
\sum_{i=1}^n\varepsilon_i^k\leq \dfrac{\sigma^2}{1 - \sigma^2}\varphi_k(\widetilde p^k).
\]
To finish the proof of item \emph{\ref{lem:kyoto.ter}}, just combine the latter inequality and the second one in item 
\emph{\ref{lem:kyoto.seg}}.
\eproo

\mgap

From now on, we assume that the sequences $\seq{\alpha_k}$ and 
$\seq{\beta_k}$ given in Algorithm \ref{alg:iHPEP} satisfy the following conditions:

\begin{assumption} 
\lab{assu:ab}
\begin{enumerate}[label = \emph{(\alph*)}]
\item \lab{assu:ab.seg} The sequence $\seq{\alpha_k}$ is bounded, i.e.,
\[
 \overline{\alpha} \coloneqq \sup_{k\geq 0}\, \alpha_k < +\infty.
\]
\item \lab{assu:ab.teg} The sequence $\seq{\beta_k}$ belongs to $\ell_2$, i.e.,
\[
  \overline s \coloneqq \sum_{k=0}^\infty\,\beta^2_k < +\infty.
\]
We will also use the notation
\[
 \overline{\beta} \coloneqq \sup_{k\geq 0}\, \beta_k < +\infty.
\]
\end{enumerate}
\end{assumption}

\mgap
\mgap

The following result is fundamental for both the convergence and iteration-complexity analysis of
Algorithm \ref{alg:iHPEP}. The proof, which will be skipped here, follows the same outline of the proof of \cite[Proposition 3.4(b)]{ACM24}.

\mgap

\begin{proposition} \lab{pr:sumfin}
Let $\seq{p^k}$ be generated by \emph{Algorithm \ref{alg:iHPEP}}, let $\seq{\widetilde p^{\,k}}$ be as
in \eqref{eq:indetityp} and let $d_{0,\gamma}$ denote the distance of $p^0$ to the solution set $\Sa_e$ of \eqref{eq:probr}. Also, let $\overline{\alpha}$, $\overline{\beta}$ and $\overline{s}$ be as in \emph{Assumption \ref{assu:ab}}. Then
\begin{align}\label{ineq:bounded}
\sum_{k=0}^\infty\,\norm{p^{k+1} - \widetilde p^{\,k}}_{\gamma}^2 \leq \Omega_{(\alpha, \beta)}\, d_{0,\gamma}^2
\end{align}
where
\begin{align}\lab{eq:def.omega}
\Omega_{(\alpha, \beta)} \coloneqq (1 + \overline \alpha)\Big[ (1 + \overline \beta)\big[ 1 + \overline \alpha (1 + \overline \beta) \big] + \overline s \Big].
\end{align}
\end{proposition}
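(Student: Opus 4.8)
The plan is to reduce everything to the three elementary increments governed by Algorithm~\ref{alg:iHPEP} -- the current step $p^{k+1}-p^{k}$, the previous step $p^{k}-p^{k-1}$, and the anchor displacement $p^{k}-p^{0}$ -- and then sum. First I would record the exact algebraic identity obtained by substituting the relations for $\widehat{p}^{\,k}$ and $\widetilde{p}^{\,k}$ stated just before the proposition (see \eqref{eq:ext.01z}--\eqref{eq:ext.03z} and \eqref{eq:indetityp}):
\begin{equation*}
p^{k+1} - \widetilde{p}^{\,k} = \big(p^{k+1}-p^{k}\big) - \alpha_k(1+\beta_k)\big(p^{k}-p^{k-1}\big) - \beta_k\big(p^{k}-p^{0}\big).
\end{equation*}
The virtue of this identity is that it isolates exactly three vectors whose squared $\gamma$-norms are already controlled: by Proposition~\ref{pro:inva}\emph{\ref{pro:inva.c}} the current steps are square-summable, $\sum_{k}\normq{p^{k+1}-p^{k}}_{\gamma}\le d_{0,\gamma}^2$, and (shifting the index and using $p^{-1}=p^{0}$ from the input of Algorithm~\ref{alg:iHPEP}, so that $p^{0}-p^{-1}=0$) the same bound holds for $\sum_{k}\normq{p^{k}-p^{k-1}}_{\gamma}$; by Proposition~\ref{pro:inva}\emph{\ref{pro:inva.b}} the anchor displacement is uniformly bounded, $\norm{p^{k}-p^{0}}_{\gamma}\le d_{0,\gamma}$.

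Next I would estimate $\normq{p^{k+1}-\widetilde{p}^{\,k}}_{\gamma}$ by a nested application of a weighted Cauchy--Schwarz inequality $\normq{\sum_i c_i x_i}_\gamma\le(\sum_i|c_i|)\,(\sum_i|c_i|\,\normq{x_i}_\gamma)$ (a weighted refinement of \eqref{eq:ineq.norm}, valid for arbitrary scalars $c_i$ by convexity of $\norm{\cdot}_{\gamma}^2$). Peeling off the $\alpha$-extrapolation first, with weights $1$ and $\alpha_k$ applied to the vectors $(p^{k+1}-p^{k})-\beta_k(p^{k}-p^{0})$ and $(1+\beta_k)(p^{k}-p^{k-1})$, produces the outer factor $(1+\alpha_k)\le 1+\overline{\alpha}$; the previous-step contribution then sums, via Assumption~\ref{assu:ab}\emph{\ref{assu:ab.seg}}, to at most $\overline{\alpha}(1+\overline{\beta})^2 d_{0,\gamma}^2$, matching the term $(1+\overline\alpha)\overline\alpha(1+\overline\beta)^2$ of $\Omega_{(\alpha,\beta)}$. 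For the remaining bracket I would separate the current step (which carries a harmless bounded factor $1+\overline\beta$ and sums against $\sum_{k}\normq{p^{k+1}-p^{k}}_\gamma\le d_{0,\gamma}^2$) from the anchor term. The anchor is the only piece that does not vanish, so here I would use an inequality that genuinely produces the square $\beta_k^2$ (Young's inequality, or Cauchy--Schwarz in the summation index $k$) together with $\norm{p^{k}-p^{0}}_{\gamma}\le d_{0,\gamma}$, whence its total contribution is governed by $\sum_{k}\beta_k^2\le\overline{s}$ from Assumption~\ref{assu:ab}\emph{\ref{assu:ab.teg}}. Collecting the factored constants then yields \eqref{ineq:bounded} with $\Omega_{(\alpha,\beta)}$ as in \eqref{eq:def.omega}.

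The main obstacle -- and the reason the argument requires care rather than a crude application of \eqref{eq:ineq.norm} with $n=3$ -- is precisely the anchor term $\beta_k(p^{k}-p^{0})$: unlike the two step increments it does not tend to zero, so its square-summability cannot be borrowed from Proposition~\ref{pro:inva} and must come entirely from $\overline{s}=\sum_{k}\beta_k^2<\infty$. The delicate bookkeeping is to arrange the weights in the two-stage splitting so that each increment absorbs exactly the power of $\beta_k$ its available bound can support (a single $\beta_k^2$ for the non-vanishing anchor, and bounded factors $1+\overline\beta$ for the vanishing increments), and so that the weight-sums collapse to the stated product form $(1+\overline\alpha)\big[(1+\overline\beta)(1+\overline\alpha(1+\overline\beta))+\overline s\big]$ rather than a looser constant. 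This is exactly the mechanism of \cite[Proposition 3.4(b)]{ACM24}, whose argument transfers to the present $\gamma$-weighted product space $\HHH$ with only cosmetic changes.
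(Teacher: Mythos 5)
Your setup is correct as far as it goes: the decomposition identity $p^{k+1}-\widetilde p^{\,k}=(p^{k+1}-p^k)-\alpha_k(1+\beta_k)(p^k-p^{k-1})-\beta_k(p^k-p^0)$, the shifted square-summability of the previous steps via $p^{-1}=p^0$, and the outer weighted splitting with weights $1$ and $\alpha_k$ (which produces the factor $(1+\overline\alpha)$ and the contribution $\overline\alpha(1+\overline\beta)^2d_{0,\gamma}^2$) are all fine. The genuine gap is in your treatment of the anchor term $\beta_k(p^k-p^0)$: neither Young's inequality nor Cauchy--Schwarz in the summation index can reach the constant $\Omega_{(\alpha,\beta)}$. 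Writing $a_k\coloneqq p^{k+1}-p^k$ and $c_k\coloneqq p^k-p^0$, Young with parameter $\theta_k$ gives $\normq{a_k-\beta_k c_k}_{\gamma}\leq(1+\theta_k)\normq{a_k}_{\gamma}+(1+1/\theta_k)\beta_k^2\normq{c_k}_{\gamma}$; to keep the current-step coefficient within the budget $1+\overline\beta$ of \eqref{eq:def.omega} you must take $\theta_k\leq\overline\beta$, and then the anchor contributes at least $(1+1/\overline\beta)\sum_k\beta_k^2=\overline s+\overline s/\overline\beta$, exceeding its budget $\overline s$ by at least $\overline\beta$, because $\overline\beta^{\,2}\leq\sum_k\beta_k^2=\overline s$. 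Cauchy--Schwarz in $k$ fares no better: it produces the cross contribution $2\sqrt{\overline s}\geq 2\overline\beta$ where the budget allows only $\overline\beta$. In fact no bookkeeping of this kind can work: one can build sequences (jump away from $p^0$, then oscillate, with $\beta_k$ supported on the steps that move back toward $p^0$) that satisfy every fact you invoke --- Proposition \ref{pro:inva}\emph{\ref{pro:inva.b}}, \emph{\ref{pro:inva.c}}, the shifted sum, and Assumption \ref{assu:ab} --- and yet violate \eqref{ineq:bounded}.

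The missing ingredient is geometric rather than combinatorial. Since $p^{k+1}\in W_k$ (see \eqref{eq:updt01} and \eqref{def:Hk-Wk}), the definition of $W_k$ gives $\inner{p^0-p^k}{p^{k+1}-p^k}_{\gamma}\leq 0$, that is, $\inner{a_k}{c_k}_{\gamma}\geq 0$; this is exactly the content of Proposition \ref{pro:inva}\emph{\ref{pro:inva.a}}. Hence in the expansion $\normq{a_k-\beta_k c_k}_{\gamma}=\normq{a_k}_{\gamma}-2\beta_k\inner{a_k}{c_k}_{\gamma}+\beta_k^2\normq{c_k}_{\gamma}$ the cross term has a favorable sign and can simply be discarded, giving $\normq{a_k-\beta_k c_k}_{\gamma}\leq\normq{a_k}_{\gamma}+\beta_k^2 d_{0,\gamma}^2$ at no cost. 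Plugging this into your outer splitting and summing yields $(1+\overline\alpha)\bigl[1+\overline\alpha(1+\overline\beta)^2+\overline s\bigr]d_{0,\gamma}^2\leq\Omega_{(\alpha,\beta)}\,d_{0,\gamma}^2$, which proves \eqref{ineq:bounded}. This Fej\'er-type sign argument, built into the projection onto $W_k$, is the actual mechanism behind \cite[Proposition 3.4(b)]{ACM24} (the proof the paper defers to); weighted Cauchy--Schwarz alone cannot substitute for it.
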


\mgap
\mgap

\begin{proposition}\label{pro:lim}
Consider the sequences generated by \emph{Algorithm \ref{alg:iHPEP}}. Then the following holds:
\begin{enumerate}[label = \emph{(\alph*)}]
\item \label{pro:lim.seg} $y_n^k + \sum_{i=1}^{n-1} G_i^\ast y_i^k \to 0$ and, for $i=1, \ldots ,n$, we have $x_i^k - G_i x_n^k \to 0$.
\item \label{pro:lim.ter} For $i=1, \ldots ,n$, we have $\varepsilon_i^k \to 0$.
\item \lab{pro:lim.qua} For $i=1, \ldots ,n$, we have $G_i \wti z^{\, k} - x_i^k \to 0$ and $\wti w^{\, k}_i - y_i^k \to 0$.
\item \lab{pro:lim.qui}  For $i=1, \ldots ,n$, we have $G_i \widehat z^{\, k} - x_i^k \to 0$ and $\widehat w^{\, k}_i - y_i^k \to 0$.
\item\lab{pro:lim.sex} For $i=1, \ldots ,n$, we have $G_i z^k - x_i^k \to 0$ and $w_i^k - y_i^k \to 0$.
\end{enumerate}
\end{proposition}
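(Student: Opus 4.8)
The plan is to funnel everything through the single summability estimate of Proposition~\ref{pr:sumfin}, which yields $\norm{p^{k+1} - \wti p^{\,k}}_\gamma \to 0$, together with the a~priori bounds already in place. First I would record the facts used repeatedly: by Proposition~\ref{pro:inva}\emph{\ref{pro:inva.c}} one has $\norm{p^{k+1} - p^k}_\gamma \to 0$, hence (shifting the index and using equivalence of the $\gamma$-norm with the coordinatewise norms) $z^k - z^{k-1} \to 0$ and $w_i^k - w_i^{k-1} \to 0$ for each $i$; by Proposition~\ref{pro:inva}\emph{\ref{pro:inva.b}} the sequence $\seq{p^k}$ is bounded; and by Assumption~\ref{assu:ab} one has $\overline\alpha < \infty$ while $\beta_k \to 0$ (since $\seq{\beta_k}\in\ell_2$).

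Items \emph{\ref{pro:lim.seg}}--\emph{\ref{pro:lim.qua}} are then immediate consequences of the preceding lemmas. For \emph{\ref{pro:lim.seg}}, the first inequality in Lemma~\ref{lem:kyoto}\emph{\ref{lem:kyoto.seg}} gives $\norm{\nabla\varphi_k}_\gamma \leq c\,\norm{p^{k+1} - \wti p^{\,k}}_\gamma \to 0$, and the explicit expression \eqref{eq:norm.grad.pi_k} for $\normq{\nabla\varphi_k}_\gamma$ writes this quantity as a sum of the squared norms of $y_n^k + \sum_{i=1}^{n-1}G_i^*y_i^k$ and of $x_i^k - G_i x_n^k$ ($i = 1,\dots,n-1$); each summand must therefore vanish, the case $i = n$ being trivial since $G_n = I$. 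For \emph{\ref{pro:lim.ter}}, Lemma~\ref{lem:kyoto}\emph{\ref{lem:kyoto.ter}} bounds the nonnegative sum $\sum_{i=1}^n \varepsilon_i^k$ by a multiple of $\normq{p^{k+1} - \wti p^{\,k}}_\gamma \to 0$, forcing each $\varepsilon_i^k \to 0$. For \emph{\ref{pro:lim.qua}} I would sandwich: Lemma~\ref{lem:varphi}\emph{\ref{lem:varphi.seg}} bounds a positive multiple of $\sum_{i=1}^n\big(\normq{G_i\wti z^{\,k} - x_i^k} + \normq{\wti w_i^{\,k} - y_i^k}\big)$ above by $\varphi_k(\wti p^{\,k})$, which by the second inequality in Lemma~\ref{lem:kyoto}\emph{\ref{lem:kyoto.seg}} is at most $c\,\normq{p^{k+1} - \wti p^{\,k}}_\gamma \to 0$; hence every summand tends to $0$.

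The last two items are obtained by successively ``undoing'' the two extrapolation steps. For \emph{\ref{pro:lim.qui}} I would use $\wti z^{\,k} - \wh z^{\,k} = \beta_k(\wh z^{\,k} - z^0)$ and its $w$-analogues (the coordinate $i=n$ coming from \eqref{eq:identityw}), so that $G_i\wh z^{\,k} - x_i^k = (G_i\wti z^{\,k} - x_i^k) - \beta_k\,G_i(\wh z^{\,k} - z^0)$; since $\seq{p^k}$ and $\seq{\alpha_k}$ are bounded, the extrapolated sequence $\wh p^{\,k} = p^k + \alpha_k(p^k - p^{k-1})$ is bounded, so $\beta_k(\wh z^{\,k} - z^0)\to 0$, and combining with \emph{\ref{pro:lim.qua}} gives $G_i\wh z^{\,k} - x_i^k \to 0$ and likewise $\wh w_i^{\,k} - y_i^k \to 0$. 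For \emph{\ref{pro:lim.sex}} I would in the same way use $\wh z^{\,k} - z^k = \alpha_k(z^k - z^{k-1})$ (and \eqref{eq:identityw} for $i=n$) to write $G_i z^k - x_i^k = (G_i\wh z^{\,k} - x_i^k) - \alpha_k\,G_i(z^k - z^{k-1})$; here $\seq{\alpha_k}$ need not vanish, but $z^k - z^{k-1}\to 0$, so the correction term still tends to $0$ and \emph{\ref{pro:lim.qui}} yields the claim.

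The only delicate point is the boundedness used in \emph{\ref{pro:lim.qui}}: I must ensure that $\wh z^{\,k} - z^0$ and each $\wh w_i^{\,k} - w_i^0$ stay bounded, so that multiplying by $\beta_k \to 0$ annihilates the correction term. This is precisely where the boundedness of $\seq{p^k}$ (Proposition~\ref{pro:inva}\emph{\ref{pro:inva.b}}) and of $\seq{\alpha_k}$ (Assumption~\ref{assu:ab}\emph{\ref{assu:ab.seg}}) must be used together; everything else is routine bookkeeping over $i$, with the sole caveat that the $n$-th $w$-coordinate is governed by \eqref{eq:identityw} rather than by \eqref{eq:ext.02z}.
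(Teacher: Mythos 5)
Your proposal is correct and follows essentially the same route as the paper's proof: items \emph{(a)}--\emph{(c)} via Lemma \ref{lem:kyoto}, Lemma \ref{lem:varphi}\emph{\ref{lem:varphi.seg}} and the summability estimate of Proposition \ref{pr:sumfin}, and items \emph{(d)}--\emph{(e)} by unwinding the two extrapolation steps using boundedness of $\seq{p^k}$ and $\seq{\alpha_k}$, $\beta_k\to 0$, and $p^{k+1}-p^k\to 0$. The only cosmetic difference is that you treat the $n$-th $w$-coordinate in item \emph{(d)} through the identity \eqref{eq:identityw} directly, where the paper uses a squared-norm estimate, but the underlying argument is the same.
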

\begin{proof}
\emph{\ref{pro:lim.seg}} This follows from the first 
inequality in Lemma \ref{lem:kyoto}\emph{\ref{lem:kyoto.seg}} combined with \eqref{eq:norm.grad.pi_k} and \eqref{ineq:bounded}(recall that $G_n = I$).

\mgap

\emph{\ref{pro:lim.ter}} This is a direct consequence 
of Lemma \ref{lem:kyoto}\emph{\ref{lem:kyoto.ter}}
 and  \eqref{ineq:bounded}. 

\mgap

\emph{\ref{pro:lim.qua}} This follows from the second inequality in Lemma \ref{lem:kyoto}\emph{\ref{lem:kyoto.seg}}, Proposition \ref{pr:sumfin} and Lemma \ref{lem:varphi}\emph{\ref{lem:varphi.seg}}.

\mgap

\emph{\ref{pro:lim.qui}} Using the triangle inequality and the second identity in \eqref{eq:ext.01z}, we obtain
\begin{align}\label{ineq:n1}
\notag
\norm{G_i \wh z^k - x_i^k} &  \leq \norm{\wti z^{\,k} - \wh z^{\,k}}\norm{G_i} + \norm{G_i \wti z^{\,k} -x_i^k}\\
& = \beta_k \norm{\wh z^{\,k} - z^0 }\norm{G_i} + \norm{G_i\wti z^{\,k} - x_i^k}, \quad i=1, \ldots,n.
\end{align}
By a similar argument, we also have
\begin{align}\label{ineq:n2}
\notag
\norm{\wh w_i^{\,k} - y_i^k} & \leq \norm{ \wti w _i^{\,k}-\wh w_i^{\,k} } + \norm{ \wti w_i^{\,k} - y_i^k}\\
& = \beta_k \norm{ \wh w_i^{\,k} - w_i^0} + \norm{ \wti w_i^{\,k} - y_i^k}, \quad i=1, \ldots , n-1.
\end{align}
Note also that, using the inequality \eqref{eq:ineq.norm}, the second 
identity in \eqref{eq:ext.02z} and \eqref{eq:ext.03z},
\begin{align}\label{ineq:n3}
\notag
\frac{1}{2} \normq{y_n^k - \wh w_n^{\,k}} & \leq \normq{ \wti w_n^{\,k} - \wh w_n^{\,k}} + \normq{y_n^k - \wti w^{\,k}_n}\\
\notag
& = \Normq{\sum_{i=1}^{n-1} G_i^{\ast}( \wti w^{\,k}_i - \wh w^{\,k}_i)} +   \normq{y_n^k - \wti w^{\,k}_n}\\
&  \leq n\,\max_{i=1, \ldots, n-1} \normq{G_i} \beta_k^2 \sum_{i=1}^{n-1} \normq{\widehat w^{\,k}_i - w_i^0} + 
\normq{y_n^k - \wti w^{\,k}_n}.
\end{align}

 Since the sequence $\seq{p^k}$ is bounded (see Proposition \ref{pro:inva}\emph{\ref{pro:inva.b}}), it follows from \eqref{def:pk}
 and \eqref{def:wwhat}  that $\seq{z^k}$ and $\seq{w_i^k}$ are also bounded, for $i=1, \dots, n$. Hence, 
 $\seq{\wh z^k}$, $\seq{\widetilde w_i^k}$ and 
 $\seq{\wh w_i^k}$ ($i=1, \dots, n$) are bounded as 
 well (use \eqref{eq:ext.01z}--\eqref{eq:ext.03z} and the boundedness 
 of $ \seq{\alpha_k} $ and $ \seq{\beta_k} $). Also, from Assumption \ref{assu:ab}\emph{\ref{assu:ab.teg}} we know that $\beta_k \to 0$. Consequently, the desired result follows from \eqref{ineq:n1}--\eqref{ineq:n3} and item 
 \emph{\ref{pro:lim.qua}}.
 
 \mgap

\emph{\ref{pro:lim.sex}} Similarly to the proof of item \emph{\ref{pro:lim.qui}}, we have the inequalities
\begin{align}\label{ineq:n3.5}
\notag
\norm{G_iz^k  - x_i^k} & \leq \norm{z^k - \wh z^{\,k}} \norm{G_i} + \norm{G_i \wh z^{\,k} - x_i^k}\\[2mm]
& = \alpha_k \norm{z^k - z^{k-1}} \norm{G_i} + \norm{ G_i \wh z^{\,k} - x_i^k}, \quad i=1, \dots, n.
\end{align}
and
\begin{align}\label{ineq:n3.6}
\notag
\norm{w_i^k - y_i^k} & \leq \norm{w_i^{\,k} - \wh w_i^k} + \norm{\wh w_i^{\,k} - y_i^k}\\[2mm]
& = \alpha_k\norm{w_i^k - w_i^{k-1}} + \norm{\wh w_i^{\,k} - y_i^k}, \quad i=1, \dots, n, &&\mbox{[by \eqref{eq:ext.02z} and \eqref{eq:identityw}]}.
\end{align}
Hence, using the fact that $z^k - z^{k-1}\to 0$ and $w_i^k - w_i^{k-1}\to 0$, $i=1, \dots, n$ (because of
Proposition \ref{pro:inva}\emph{\ref{pro:inva.c}} combined with Eqs. \eqref{eq:def.inner}, \eqref{def:pk} and \eqref{def:wwhat})
and the limitation of $\seq{\alpha_k}$ we obtain the desired result directly from \eqref{ineq:n3.5}, \eqref{ineq:n3.6}
and item \emph{\ref{pro:lim.qui}}.
\end{proof}

\mgap
\mgap

\begin{lemma}\label{lem:wcp}
Every weak cluster point of the sequence $\seq{p^k}$ as in \eqref{def:pk} belongs to $\Sa_e$.
\end{lemma}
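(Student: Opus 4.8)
The plan is to fix an arbitrary weak cluster point $p^* = (z^*, w_1^*, \ldots, w_{n-1}^*)$ of $\seq{p^k}$, say $p^{k_j}\wto p^*$, and to check the two inclusions that define $\Sa_e$ in \eqref{eq:KTS}: $w_i^*\in T_i(G_iz^*)$ for $i=1,\ldots,n-1$ and $-\sum_{i=1}^{n-1}G_i^*w_i^*\in T_n(z^*)$. First I would translate $p^{k_j}\wto p^*$ into componentwise weak limits. Since the inner product \eqref{eq:def.inner} is only a rescaling of the product inner product and each $G_i$ is weak-to-weak continuous, combining $p^{k_j}\wto p^*$ with Proposition \ref{pro:lim}\emph{\ref{pro:lim.sex}} and the definition \eqref{def:wwhat} of $w_n^k$ yields $x_i^{k_j}\wto G_iz^*$ and $y_i^{k_j}\wto w_i^*$ for every $i=1,\ldots,n$, where I set $w_n^* := -\sum_{i=1}^{n-1}G_i^*w_i^*$ and use $G_n=I$ (so that $x_n^{k_j}\wto z^*$). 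All these sequences are bounded, by boundedness of $\seq{p^k}$ (Proposition \ref{pro:inva}\emph{\ref{pro:inva.b}}) together with the same limits.

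Next I would use the inclusions $y_i^k\in T_i^{[\varepsilon_i^k]}(x_i^k)$. By the definition \eqref{def:enlarg} of the enlargement, for every choice of pairs $(u_i,v_i)\in\Gra T_i$ ($i=1,\ldots,n$),
\begin{align*}
\sum_{i=1}^n \inner{x_i^k - u_i}{y_i^k - v_i} \geq -\sum_{i=1}^n \varepsilon_i^k.
\end{align*}
I want to pass to the limit along $k_j$. The cross terms $\inner{x_i^{k_j}}{v_i}$ and $\inner{u_i}{y_i^{k_j}}$ converge by the weak limits above, the right-hand side tends to $0$ by Proposition \ref{pro:lim}\emph{\ref{pro:lim.ter}}, and the constants $\inner{u_i}{v_i}$ are inert. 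The only genuinely delicate contribution is the bilinear sum $\sum_{i=1}^n\inner{x_i^k}{y_i^k}$, a pairing of sequences that converge only weakly.

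Controlling this bilinear sum is the crux of the proof and its main obstacle: the weak--strong closedness of the enlargement (Proposition \ref{pr:teps}\emph{\ref{pr:teps.sab}}) cannot be invoked, since $\seq{y_i^k}$ does not converge strongly. The resolution is to prove that the entire sum vanishes in the limit. Splitting $x_i^k = G_ix_n^k + (x_i^k - G_ix_n^k)$ and pairing the strongly null residual $x_i^k - G_ix_n^k\to 0$ (Proposition \ref{pro:lim}\emph{\ref{pro:lim.seg}}) with the bounded $\seq{y_i^k}$ gives $\sum_{i=1}^n\inner{x_i^k}{y_i^k} = \inner{x_n^k}{\sum_{i=1}^n G_i^*y_i^k} + o(1)$; since $\sum_{i=1}^n G_i^*y_i^k = y_n^k + \sum_{i=1}^{n-1}G_i^*y_i^k\to 0$ strongly (again Proposition \ref{pro:lim}\emph{\ref{pro:lim.seg}}, using $G_n=I$) while $\seq{x_n^k}$ is bounded, this pairing also tends to $0$. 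Hence $\sum_{i=1}^n\inner{x_i^k}{y_i^k}\to 0$.

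Finally, passing to the limit in the displayed inequality and using $\sum_{i=1}^n\inner{G_iz^*}{w_i^*} = \inner{z^*}{\sum_{i=1}^n G_i^*w_i^*} = 0$ (which holds by the choice $w_n^* = -\sum_{i=1}^{n-1}G_i^*w_i^*$ and $G_n=I$), I obtain
\begin{align*}
\sum_{i=1}^n \inner{G_iz^* - u_i}{w_i^* - v_i} \geq 0 \quad\text{for all } (u_i,v_i)\in\Gra T_i,\ i=1,\ldots,n.
\end{align*}
This says precisely that $\big((G_1z^*,\ldots,z^*),(w_1^*,\ldots,w_n^*)\big)$ is monotonically related to every point in the graph of the product operator $T_1\times\cdots\times T_n$, which is maximal monotone because each $T_i$ is. Therefore $w_i^*\in T_i(G_iz^*)$ for all $i$; in particular $w_n^* = -\sum_{i=1}^{n-1}G_i^*w_i^*\in T_n(z^*)$, and comparing with \eqref{eq:KTS} shows $p^*\in\Sa_e$, which is the claim.
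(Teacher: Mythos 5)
Your proof is correct, and it takes a genuinely different route from the paper's. The paper disposes of the weak-cluster-point argument by citation: after collecting the same weak limits ($x_n^{k_j}\wto \overline z$, $y_i^{k_j}\wto \overline w_i$ via Proposition \ref{pro:lim}\emph{\ref{pro:lim.sex}}) and the strong-null facts from Proposition \ref{pro:lim} (items \emph{\ref{pro:lim.seg}} and \emph{\ref{pro:lim.ter}}), it invokes Corollary \ref{cor:proof} from Appendix \ref{app:ar}, which rests on Lemma \ref{lem:MA2} and ultimately on Lemma \ref{lem:MA}, a Bauschke-type theorem imported from an external reference. The crux you correctly identified --- the weak$\times$weak pairing $\sum_{i}\inner{x_i^k}{y_i^k}$ that blocks any use of Proposition \ref{pr:teps}\emph{\ref{pr:teps.sab}} --- is handled inside that cited lemma, where it appears as an orthogonal decomposition along the subspace $V=\set{(z,v) \mid v=Gz}$ and its complement. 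Your argument inlines exactly this mechanism in the concrete coordinates of the problem: the splitting $x_i^k = G_i x_n^k + (x_i^k - G_i x_n^k)$ and the regrouping $\inner{x_n^k}{\sum_i G_i^* y_i^k}$ play the role of the projections $P_V$, $P_{V^\perp}$, and the final step (monotone relatedness to $\Gra(T_1\times\cdots\times T_n)$ plus maximality of the product, i.e.\ Proposition \ref{pr:teps}\emph{\ref{pr:teps.qui}}) replaces the conclusion of Lemma \ref{lem:MA}. What the paper's route buys is modularity and reusability --- the appendix lemmas serve other purposes and connect to known results in the literature; what your route buys is a self-contained proof that needs nothing beyond Proposition \ref{pro:lim}, boundedness of the iterates, and the standard fact that a finite product of maximal monotone operators is maximal monotone, making it transparent exactly where each limit estimate enters.
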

\begin{proof}
Let $\overline p = (\overline z, \overline w_1, \ldots, \overline w_{n-1}) \in \HHH$ be a weak cluster point of the sequence
$\seq{p^k}$ (recall that $\seq{p^k}$ is bounded, 
by Proposition \ref{pro:inva}\emph{\ref{pro:inva.b}}) and let $\seq{p^{k_j}}$ be a subsequence of $\seq{p^k}$ such that $p^{k_j} \wto \overline p$, i.e,
\begin{align}\label{eq:weakcon}
z^{k_j} \wto \overline z\quad \mbox{ and }\quad w_i^{k_j} \wto \overline w_i, \quad i = 1, \ldots ,n - 1.
\end{align}
Using Proposition \ref{pro:lim}\emph{\ref{pro:lim.sex}}, \eqref{eq:weakcon} and the fact that $G_n = I$ 
(see Assumption A2 above), we also obtain
\begin{align*}
x_n^{k_j} \wto \overline z \quad \mbox{and} \quad y_i^{k_j} \wto \overline w_i, \quad i=1, \ldots, n-1.
\end{align*}
In view Algorithm \ref{alg:iHPEP}'s definition we have  $y_i^{k_j}\in T_i^{[\varepsilon_i^{k_j}]}(x_i^{k_j})$
($1\leq i\leq n$), and from 
Proposition \ref{pro:lim} (items \emph{\ref{pro:lim.seg}} and \emph{\ref{pro:lim.ter}}) we also have
$x_i^{k_j} - G_ix_n^{k_j} \to 0$ ($1\leq i\leq n-1$), 
$y_n^{k_j} + \sum_{i=1}^{n-1}\,G_i^*y_i^{k_j} \to 0$
and $ \veps_i^{k_j} \to 0 $.
Hence, from Corollary \ref{cor:proof},
\begin{align*}
\overline w_i \in T_i (G_i \overline z),\quad i = 1, \ldots, n-1,\quad
 -\sum_{i=1}^{n-1}\,G_i^* \overline w_i \in T_n(\overline z),
\end{align*}
which, in view of the definition of $\mathcal{S}_e$ as in \eqref{eq:KTS}, gives exactly the inclusion
$\overline p\in \mathcal{S}_e$, i.e., every weak cluster point of $\seq{p^k}$ belongs to $\mathcal{S}_e$.
\end{proof}

\mgap
\mgap

Now we prove the strong convergence of  Algorithm \ref{alg:iHPEP}:

\mgap
\mgap

\begin{theorem}[Strong convergence of Algorithm \ref{alg:iHPEP}] \label{th:sconv}
Let the sequence $\seq{p^k}$ be generated by \emph{Algorithm \ref{alg:iHPEP}}, as given in \eqref{def:pk}, 
let the extended-solution set $\mathcal{S}_e$ be as in \eqref{eq:KTS} and suppose 
\emph{Assumptions} \emph{A1--A3}, \emph{\ref{assu:lambda}} and \emph{\ref{assu:ab}} hold. Also, let the sequence $\seq{w_n^k}$ be as in \eqref{def:wwhat}.

Then $\seq{p^k}$ converges strongly to $p^* \coloneqq (z^*, w_1^*, \dots, w_{n-1}^*)$, where $p^*$ denotes the orthogonal projection of $p^0$ onto $\mathcal{S}_e$, $\seq{z^k}$ converges 
strongly to \emph{(}the solution of Problem \eqref{eq:probr}\emph{)} $z^*$
and $\seq{w_i^k}$ converges strongly to $w_i^*$ $(1\leq i\leq n)$, where
$w_n^* \coloneqq -\sum_{i=1}^{n-1}\,G_i^* w_i^*$. Moreover, 
for $i =1, \ldots, n$, the sequences $\seq{x_i^k}$ and $\seq{y_i^k}$ converge strongly to 
$G_i z^*$ and $w_i^*$, respectively.
\end{theorem}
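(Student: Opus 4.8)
The plan is to run the classical Haugazeau/Solodov--Svaiter strong-convergence scheme, which upgrades the already-established asymptotic regularity and weak clustering into norm convergence by exploiting the minimality property of the metric projection. First I would collect the three ingredients furnished by the preparatory results: the sequence $\seq{p^k}$ is bounded with $\norm{p^k - p^0}_\gamma \le d_{0,\gamma}$ for all $k$ (Proposition \ref{pro:inva}\emph{\ref{pro:inva.b}}); the scalar sequence $\seq{\norm{p^k - p^0}_\gamma}$ is nondecreasing (Proposition \ref{pro:inva}\emph{\ref{pro:inva.a}}), hence convergent to some $\ell \le d_{0,\gamma}$; and every weak cluster point of $\seq{p^k}$ lies in $\Sa_e$ (Lemma \ref{lem:wcp}).

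The heart of the argument is to identify these cluster points. Let $\overline p$ be any weak cluster point, say $p^{k_j} \wto \overline p$, so $\overline p \in \Sa_e$ by Lemma \ref{lem:wcp}. Weak lower semicontinuity of $\norm{\cdot}_\gamma$ together with convergence of the scalar sequence gives
\begin{align*}
\norm{\overline p - p^0}_\gamma \le \liminf_{j}\, \norm{p^{k_j} - p^0}_\gamma = \ell \le d_{0,\gamma} = \norm{p^* - p^0}_\gamma.
\end{align*}
Since $p^* = P_{\Sa_e}(p^0)$ is the unique point of $\Sa_e$ nearest $p^0$ in the $\norm{\cdot}_\gamma$ norm and $\overline p \in \Sa_e$, this forces $\overline p = p^*$ and simultaneously $\ell = d_{0,\gamma}$. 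As the bounded sequence $\seq{p^k}$ then has $p^*$ as its only weak cluster point, it converges weakly to $p^*$, and we have shown $\norm{p^k - p^0}_\gamma \to \norm{p^* - p^0}_\gamma$. Expanding
\begin{align*}
\norm{p^k - p^*}_\gamma^2 = \norm{p^k - p^0}_\gamma^2 - 2\inner{p^k - p^0}{p^* - p^0}_\gamma + \norm{p^* - p^0}_\gamma^2,
\end{align*}
the middle term tends to $\norm{p^* - p^0}_\gamma^2$ by weak convergence, whence the right-hand side tends to $0$; this yields the strong convergence $p^k \to p^*$.

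The remaining assertions follow by componentwise control from the norm \eqref{eq:def.inner} and the asymptotic identities of Proposition \ref{pro:lim}. Since $\norm{p}_\gamma^2 = \gamma\norm{z}^2 + \sum_{i=1}^{n-1}\norm{w_i}^2$ dominates each coordinate, $p^k \to p^*$ gives $z^k \to z^*$ and $w_i^k \to w_i^*$ for $i = 1,\dots,n-1$; continuity of the adjoints then yields $w_n^k = -\sum_{i=1}^{n-1} G_i^* w_i^k \to -\sum_{i=1}^{n-1} G_i^* w_i^* = w_n^*$. Writing $x_i^k = G_i z^k - (G_i z^k - x_i^k)$ and $y_i^k = w_i^k - (w_i^k - y_i^k)$ and invoking Proposition \ref{pro:lim}\emph{\ref{pro:lim.sex}} together with continuity of $G_i$ gives $x_i^k \to G_i z^*$ and $y_i^k \to w_i^*$ for all $i=1,\dots,n$, using $G_n = I$ and $w_n^k \to w_n^*$ for the index $i = n$. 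Finally, $p^* \in \Sa_e$ means exactly that $z^*$ solves \eqref{eq:probr}, by the equivalence recorded just after \eqref{eq:KTS}.

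I expect the only genuinely delicate point to be the step that collapses every weak cluster point onto the single projection $p^*$: one must combine the uniform bound $\norm{p^k - p^0}_\gamma \le d_{0,\gamma}$ of Proposition \ref{pro:inva}\emph{\ref{pro:inva.b}} (itself resting on the inclusion $\Sa_e \subseteq H_k \cap W_k$ from Proposition \ref{pro:inclutionset}) with weak lower semicontinuity of the norm, and it is precisely the strict convexity of the Hilbert norm---uniqueness of nearest points in the closed convex set $\Sa_e$---that both excludes spurious cluster points and forces $\ell = d_{0,\gamma}$. That equality is exactly what powers the closing weak-convergence-plus-norm-convergence implication; everything downstream, namely the splitting of $p^*$ into coordinates and the transfer to $\seq{x_i^k}$ and $\seq{y_i^k}$, is routine bookkeeping.
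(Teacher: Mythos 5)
Your proposal is correct and follows essentially the same route as the paper's proof: boundedness and the Fej\'er-type bound from Proposition \ref{pro:inva}, identification of every weak cluster point with $p^* = P_{\Sa_e}(p^0)$ via Lemma \ref{lem:wcp}, weak lower semicontinuity of $\norm{\cdot}_\gamma$ and uniqueness of the nearest point, then the weak-convergence-plus-norm-convergence upgrade to strong convergence, followed by the same componentwise bookkeeping through Proposition \ref{pro:lim}\emph{\ref{pro:lim.sex}}. The only cosmetic differences are that you invoke the monotonicity of $\norm{p^k - p^0}_\gamma$ (Proposition \ref{pro:inva}\emph{\ref{pro:inva.a}}) to get convergence of the scalar sequence up front, and you write out the Radon--Riesz expansion explicitly where the paper simply asserts it.
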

\begin{proof}
\noindent
{\bf Step 1.} We first prove that $p^k\to p^*$. To this end, note that in 
view of Proposition \ref{pro:inva}\emph{\ref{pro:inva.b}} and the fact that 
$d_{0, \gamma} = \norm{p^* - p^0}_\gamma$, we have
\begin{align}\label{eq:xkb}
\norm{p^k - p^0}_{\gamma} \leq \norm{p^{\ast} - p^0}_{\gamma} \quad \mbox{for all}\quad k\geq 0.
\end{align}
Thus, $\seq{p^k}$ is bounded and, by Lemma \ref{lem:wcp}, its weak cluster points belong to $\Sa_e$. 
Then, let $\seq{p^{k_j}}$ be a subsequence of $\seq{p^k}$ and $\overline{p}$ be a weak cluster point of
$\seq{p^k}$ belonging to $\mathcal{S}_e$\,:
\begin{align*}
\overline p\in \mathcal{S}_e,\quad p^{k_j} \rightharpoonup \overline{p}.
\end{align*}
Since $\norm{\cdot}_{\gamma}$ is lower semicontinuous in the weak topology on $\boldsymbol{\Ha}$, we obtain
\begin{align}\label{eq:ast}
\norm{\overline{p} - p^0}_{\gamma} \leq \liminf \norm{p^{k_j} - p^0}_{\gamma} \leq \limsup \norm{p^{k_j} - p^0}_{\gamma} \leq \norm{p^{\ast} -p^0}_{\gamma},
\end{align}
where we used \eqref{eq:xkb} in the latter inequality. Since, by definition, 
$p^{\ast} = P_{\Sa_e}(p^0)$ and $\overline{p} \in \Sa_e$, it follows directly from \eqref{eq:ast} that 
$\overline{p} = p^{\ast}$. Thus, $p^k \wto p^*$, and from \eqref{eq:xkb}, we have
\begin{align*}
\norm{p^* - p^0}_{\gamma} \leq \liminf \norm{p^k - p^0}_{\gamma} \leq \limsup \norm{p^k - p^0}_{\gamma} \leq \norm{p^{\ast} - p^0}_{\gamma}.
\end{align*}
Hence, $\norm{p^k - p^0}_{\gamma} \to  \norm{p^{\ast} - p^0}_{\gamma}$. Since $p^k - p^0 \wto p^* - p^0$, it then follows that $p^k - p^0 \to p^* - p^0$, which implies $p^k \to p^*$.

\mgap

{\bf Step 2.} The result $z^k \to z^*$ and $w_i^k \to w_i^*$ ($1\leq i\leq n$) is a direct consequence of the convergence $p^k \to p^*$ and the definitions $p^k = (z^k, w_1^k, \dots, w_{n-1}^k)$, 
$p^* = (z^*, w_1^*, \dots, w_{n-1}^*)$, $w_n^k = -\sum_{i=1}^{n-1}G_i^*w_i^k$ and 
$w_n^* = -\sum_{i=1}^{n-1}\,G_i^* w_i^*$. To finish the proof of the theorem, it remains to prove that
$\seq{x_i^k}$ and $\seq{y_i^k}$ converge strongly to 
$G_i z^*$ and $w_i^*$, respectively, for all $i =1, \dots, n$. To this end, note that this result follows directly from Proposition \ref{pro:lim}\emph{\ref{pro:lim.sex}} and the fact that $z^k \to z^*$ and $w_i^k \to w_i^*$ ($1\leq i\leq n$) .
\end{proof}

\subsection{Iteration-complexity analysis of Algorithm \ref{alg:iHPEP}}
\lab{subsec:itec}

In this subsection we study the iteration-complexity of Algorithm \ref{alg:iHPEP}.
To this purpose, we first introduce the following notion of approximate solution for 
problem \eqref{eq:probr}: $\{x_i\}_{i=1}^n$, $\{y_i\}_{i=1}^n$ and $\{\varepsilon_i\}_{i=1}^n$, 
where $x_i, y_i \in \HH_i$ and $\varepsilon_i \in \R_+$ ($i=1, \ldots ,n$), define an approximate solution of \eqref{eq:probr}  with tolerance $\rho > 0$ if 
\begin{align}\label{errortolerance}
\begin{aligned}
& y_i \in T_i^{[\varepsilon_i]}(x_i), \quad i=1, \ldots,n, \quad \sum_{i=1}^{n} \varepsilon_i \leq \rho, \quad  \Norm{\sum_{i=1}^{n-1}G_i^\ast y_i + y_n} \leq \rho,\\[2mm]
& \norm{x_i - G_i x_n} \leq \rho, \quad i=1, \ldots, n-1.
\end{aligned}
\end{align}
It is easy to check that if we set $\rho = 0$ in \eqref{errortolerance}, then it follows that $x_n$ is a solution of
\eqref{eq:probr}.

Next we present our result on the iteration-complexity of Algorithm \ref{alg:iHPEP}. We wish to estimate the number of iterations that  Algorithm \ref{alg:iHPEP} requires to find $\{x_i\}_{i=1}^n$, $\{y_i\}_{i=1}^n$ and $\{\varepsilon_i\}_{i=1}^n$ satisfying \eqref{errortolerance}, for a given tolerance $\rho > 0$.

\mgap

\begin{theorem}[Pointwise iteration-complexity of Algorithm \ref{alg:iHPEP}]\label{theo:ite-com}
Consider the sequences evolved by \emph{Algorithm \ref{alg:iHPEP}}, let $d_{0,\gamma}$ denote the distance of $p^0$ to the solution set $\Sa_e$ as in \eqref{eq:KTS} and let 
the constant $\Omega_{(\alpha, \beta)} > 0$ be as in \eqref{eq:def.omega}. 
Also, suppose \emph{Assumptions 
A1--A3, \ref{assu:lambda}} and \emph{\ref{assu:ab}} hold.
Then, for all $k \geq 0$, there exists $\ell \in \{0,1, \ldots,k\}$ such that
\begin{subnumcases}{ }
y_i^\ell \in T_i^{[\varepsilon_i^\ell]}(x_i^\ell), \quad i =1, \ldots ,n, & \lab{peryj} \\[3mm]
\Norm{\sum_{i=1}^{n-1}G_i^\ast y_i^\ell + y_n^\ell} \leq  
\dfrac{n \Big(\displaystyle \max_{i =1, \ldots,n}\normq{G_i}\Big) d_{0,\gamma}}{\sqrt{k+1}} 
\left( \dfrac{4 \sqrt{\gamma} \max\{1, 1/\gamma\}\sqrt{\Omega_{(\alpha, \beta)}}}{(1-\sigma^2) \min \{\underline{\lambda}, 1/\overline{\lambda}\}}\right), & \label{des:v_jcom} \\[3mm]
\norm{x_i^\ell - G_i x_n^\ell} \leq \dfrac{n \Big(\displaystyle \max_{i =1, \ldots,n}\normq{G_i}\Big) d_{0,\gamma}}{\sqrt{k+1}} 
\left( \dfrac{4 \max \{1, 1/\gamma\}\sqrt{\Omega_{(\alpha, \beta)}}}{(1-\sigma^2) \min \{\underline{\lambda}, 1/\overline{\lambda}\}}\right),\quad i=1,\ldots, n-1, & \label{des:e_jcom}  \\[3mm]
\sum_{i=1}^n \varepsilon_i^\ell \leq  
\dfrac{n \Big(\displaystyle \max_{i =1, \ldots,n}\normq{G_i}\Big) d_{0,\gamma}^2}{k+1}\left( \dfrac{4\sigma^2 \max \{1, 1/\gamma\}
\Omega_{(\alpha, \beta)}}{(1-\sigma^2)^2 \min \{\underline{\lambda}, 1/\overline{\lambda}\}}\right). \lab{eq:wind}
\end{subnumcases}
\end{theorem}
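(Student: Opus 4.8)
The plan is to combine the summability estimate of Proposition~\ref{pr:sumfin} with a standard averaging (pigeonhole) argument, and then to translate the resulting smallness of $\norm{p^{\ell+1}-\wti p^{\,\ell}}_\gamma$ into the three quantitative bounds \eqref{des:v_jcom}--\eqref{eq:wind} by means of Lemma~\ref{lem:kyoto} and the explicit formula \eqref{eq:norm.grad.pi_k} for $\normq{\nabla\varphi_k}_\gamma$. The inclusion \eqref{peryj} requires no argument: it holds for every index by the very construction of the triples $(x_i^k,y_i^k,\varepsilon_i^k)$ in Algorithm~\ref{alg:iHPEP} (see \eqref{eq:proxT_i}).

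First I would fix $k\geq 0$ and invoke Proposition~\ref{pr:sumfin}, which gives the bound $\sum_{j=0}^{\infty}\norm{p^{j+1}-\wti p^{\,j}}_\gamma^2\leq \Omega_{(\alpha,\beta)}\,d_{0,\gamma}^2$. Since the smallest of the first $k+1$ terms cannot exceed their average, there exists $\ell\in\{0,1,\dots,k\}$ with
\[
\norm{p^{\ell+1}-\wti p^{\,\ell}}_\gamma^2\leq \frac{1}{k+1}\sum_{j=0}^{k}\norm{p^{j+1}-\wti p^{\,j}}_\gamma^2\leq \frac{\Omega_{(\alpha,\beta)}\,d_{0,\gamma}^2}{k+1}.
\]
This index $\ell$ is the one asserted in the statement, and all subsequent estimates are established for it.

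Next I would derive \eqref{des:v_jcom} and \eqref{des:e_jcom}. The first inequality in Lemma~\ref{lem:kyoto}\emph{\ref{lem:kyoto.seg}} yields $\norm{\nabla\varphi_\ell}_\gamma\leq c\,\norm{p^{\ell+1}-\wti p^{\,\ell}}_\gamma$ with $c$ as in \eqref{eq:constantc}, whence $\norm{\nabla\varphi_\ell}_\gamma\leq c\,\sqrt{\Omega_{(\alpha,\beta)}}\,d_{0,\gamma}/\sqrt{k+1}$. From \eqref{eq:norm.grad.pi_k}, both $\gamma^{-1}\Normq{\sum_{i=1}^{n-1}G_i^*y_i^\ell+y_n^\ell}$ and each $\normq{x_i^\ell-G_ix_n^\ell}$ $(1\leq i\leq n-1)$ are bounded above by $\normq{\nabla\varphi_\ell}_\gamma$. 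Taking square roots (and multiplying by $\sqrt{\gamma}$ in the first case), then inserting the bound on $\norm{\nabla\varphi_\ell}_\gamma$ and the explicit value of $c$, produces exactly the right-hand sides of \eqref{des:v_jcom} and \eqref{des:e_jcom}; the extra factor $\sqrt{\gamma}$ appearing in \eqref{des:v_jcom} but not in \eqref{des:e_jcom} is precisely the one carried by the $\gamma^{-1}$-weight on the first coordinate of $\norm{\cdot}_\gamma$ in \eqref{eq:norm.grad.pi_k}.

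Finally, for \eqref{eq:wind} I would use Lemma~\ref{lem:kyoto}\emph{\ref{lem:kyoto.ter}}, namely $\sum_{i=1}^n\varepsilon_i^\ell\leq \big(\sigma^2/(1-\sigma^2)\big)\,c\,\normq{p^{\ell+1}-\wti p^{\,\ell}}_\gamma$, and substitute the averaging bound on $\normq{p^{\ell+1}-\wti p^{\,\ell}}_\gamma$ together with the value of $c$ from \eqref{eq:constantc}; this is where the quadratic dependence on $d_{0,\gamma}$, the factor $(1-\sigma^2)^{-2}$, and the $1/(k+1)$ rate appear simultaneously. There is no serious obstacle here: the whole argument is a routine averaging step followed by bookkeeping, and the only point demanding care is tracking the weight $\gamma$ so that the constants match \eqref{des:v_jcom}--\eqref{eq:wind} exactly.
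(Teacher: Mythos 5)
Your proposal is correct and follows essentially the same route as the paper's proof: Proposition~\ref{pr:sumfin} plus a pigeonhole argument to select $\ell$, then Lemma~\ref{lem:kyoto} and the gradient formula \eqref{eq:norm.grad.pi_k} with the constant $c$ from \eqref{eq:constantc} to obtain \eqref{des:v_jcom}--\eqref{eq:wind}. The only cosmetic difference is that the paper applies the minimization to the auxiliary quantity $\Delta_j \coloneqq \max\bigl\{\normq{\nabla\varphi_j}_\gamma/c^2,\ \tfrac{1-\sigma^2}{c\,\sigma^2}\sum_{i=1}^n\varepsilon_i^j\bigr\}$ rather than directly to $\normq{p^{j+1}-\wti p^{\,j}}_\gamma$ as you do; since both quantities are dominated by $\normq{p^{j+1}-\wti p^{\,j}}_\gamma$ at every index, your choice of $\ell$ yields the same bounds.
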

\bproo
Direct use of the first inequality in item \emph{\ref{lem:kyoto.seg}} and item \emph{\ref{lem:kyoto.ter}}
of Lemma \ref{lem:kyoto} yields, for all $j\geq 0$,
\begin{align} \lab{eq:vacation}
\Delta_j \coloneqq \max\left\{ \dfrac{\normq{\nabla \varphi_j}_\gamma}{c^2}, 
\dfrac{1 - \sigma^2}{c \,\sigma^2}\sum_{i=1}^n\,\varepsilon_i^j\right\}\leq \normq{p^{j+1} - \widetilde p^j}_\gamma
\end{align}
which in turn combined with Proposition \ref{pr:sumfin} gives
\begin{align*}
\sum_{j=0}^k\,\Delta_j \leq \Omega_{(\alpha, \beta)} d_{0,\gamma}^2
\end{align*}
where $\Omega_{(\alpha, \beta)}$ is as in \eqref{eq:def.omega}. So, choosing $\ell \in \set{0, \dots, k}$ such that
$\Delta_\ell = \min_{j=0, \dots, k}\,\Delta_j$ we find
\begin{align}\lab{eq:vacation02}
(k + 1) \Delta_\ell \leq \Omega_{(\alpha, \beta)} d_{0,\gamma}^2.
\end{align}
From \eqref{eq:vacation} and \eqref{eq:vacation02},
\begin{align} \lab{eq:vacation03}
 \normq{\nabla \varphi_\ell}_\gamma\leq \dfrac{c^2\Omega_{(\alpha, \beta)} d_{0,\gamma}^2}{k+1}
 \quad \mbox{and} \quad 
 \sum_{i=1}^n\,\varepsilon_i^\ell\leq \dfrac{c\,\sigma^2\Omega_{(\alpha, \beta)} d_{0,\gamma}^2}{(1 - \sigma^2)(k+1)}.
\end{align}
(Recall that $c$ is defined as in \eqref{eq:constantc}.) Now combining the first inequality in \eqref{eq:vacation03}
and \eqref{eq:norm.grad.pi_k} we also find
\begin{align*}
\gamma^{-1}\left\|\sum_{i=1}^{n-1}G_i^*y_i^\ell + y_n^\ell\right\|^2 + \sum_{i=1}^{n-1}\norm{x_i^\ell - G_ix_{n}^\ell}^2
\leq \dfrac{c^2\Omega_{(\alpha, \beta)} d_{0,\gamma}^2}{k+1},
\end{align*}
which in turn, after some simple algebra, yields \eqref{des:v_jcom} and \eqref{des:e_jcom}. Note also that \eqref{eq:wind}  follows directly from the second inequality in \eqref{eq:vacation03}. To finish the proof of the theorem, note that 
\eqref{peryj} follows from Algorithm \ref{alg:iHPEP}'s definition (see the inclusions in \eqref{eq:proxT_i}).
\eproo

\section{Variants of the strongly convergent inertial projective splitting algorithm with forward-backward
and forward-backward-forward steps} \label{sec:FB-TFB}

In this section we consider a structured version of the monotone inclusion problem \eqref{eq:probr}, where 
each maximal monotone operator $T_i$ has a splitting  structure: $T_i = F_i + B_i$ for all $i =1, \dots, n$. 
More precisely, we consider the inclusion problem 
\begin{align}\label{eq:Smip}
0\in \sum_{i=1}^{n}G_i^*(\underbrace{F_i + B_i}_{T_i})G_i(z),
\end{align}
where $n\geq 2$ and the following holds:
\begin{itemize}
\item [B1.] $F_i \colon \Dom F_i \subseteq \Ha_i \to \Ha_i$ is a single-valued continuous monotone map.
\item [B2.] $B_i \colon \Ha_i \tos \Ha_i$ is a set-valued maximal monotone operator such that 
$\Dom B_i \subseteq C_i \subseteq \Dom F_i$, where $C_i$ is a nonempty closed and convex subset of $\Ha_i$.
\item [B3.] Assumption A2 of Section \ref{sec:main} holds on the bounded linear operators 
$G_i$ ($1\leq i\leq n$).
\item [B4.] The solution set of \eqref{eq:Smip} is nonempty.
\end{itemize}
Under mild assumptions on the functions $f_i(\cdot)$ and $g_i(\cdot)$ ($1\leq i\leq n$), problem \eqref{eq:Smip} encompasses
the finite-sum convex minimization problem
\[
\min_{x \in \Ha_0} \sum_{i=1}^n f_i(G_ix) + g_i(G_ix)
\]
where, for $i=1, \dots, n$, $f_i \colon \Ha_i \to \R\cup\{+\infty\}$ is a proper closed and convex function and 
$g_i \colon \Ha_i \to \mathbb{R}$ is a sufficiently smooth finite convex function. 

\mgap

\brema \lab{rem:adapter}
\emph{
Assumptions B1 and B2 above guarantee that $T_i = F_i + B_i$ ($1\leq i\leq n$) is 
maximal monotone (see, e.g., \cite{MS11}). This combined with B3 and B4 gives us that the conditions on Assumptions A1--A3 of Section \ref{sec:main} are met in the setting of the present section. 
Hence, one can apply the results of Section \ref{sec:main} to problem \eqref{eq:Smip}.} 
\erema

Assuming that the resolvents $(B_i + I)^{-1}$ of $B_i(\cdot)$ ($1\leq i \leq n$) are easy to 
evaluate, in this section we propose two variants of Algorithm \ref{alg:iHPEP} for (numerically) solving \eqref{eq:Smip}, namely:
\begin{itemize}
\item [1.]  One with forward-backward steps for subproblems, assuming
$F_i(\cdot)$ is $L_i^{-1}$-cocoercive on $\Dom F_i$, i.e., for $i = 1, \dots, n$,
\begin{align}\label{ineq:cocoercivity}
\inner{F_i(x) - F_i(y)}{ x - y}  \geq \frac{1}{L_i} \normq{F_i(x) - F_i(y)} \quad \mbox{for all} \quad x, y \in \Dom F_i,
\end{align}
where $L_i >0$. The proposed algorithm appears as Algorithm \ref{alg:iHPEP-FB} below and the main results on its convergence and iteration-complexity are summarized in Theorem \ref{theo:FBComplexity}.
\item [2.] A second variant with forward-backward-forward (Tseng) steps for the case where 
$F_i (\cdot)$ is $L_i$-Lipschitz continuous, i.e., for $i = 1, \dots, n$,
\begin{align}\label{ineq:lipschitz}
\norm{F_i(x) - F_i(y)} \leq L_i \norm{ x - y} \quad \mbox{for all} \quad  x,y \in \Dom F_i,
\end{align}
where $L_i > 0$. The method is presented as Algorithm \ref{alg:iHPEP-Tseng-FB} below, and the main results on its strong convergence and iteration-complexity are summarized in Theorem \ref{theo:TFBComplexity}.
\end{itemize}

\subsection{A variant of the strongly convergent inertial projective splitting algorithm with forward-backward 
steps} \label{subsec:F-B}

In this subsection, assuming that B1--B4 and \eqref{ineq:cocoercivity} hold, we propose 
Algorithm \ref{alg:iHPEP-FB} to numerically solve \eqref{eq:Smip}.

\mgap

\begin{algorithm}
\caption{A variant of the strongly convergent inertial PS algorithm with forward-backward 
steps for solving \eqref{eq:Smip}.}
\SetAlgoLined
\KwInput{$(z^{-1}, w^{-1}_1, \ldots, w^{-1}_{n-1})  = (z^{0}, w^0_1, \ldots, w^0_{n-1})\in \boldsymbol{\Ha}$, 
scalars $0 < \sigma < 1$ and $\gamma > 0$. For $i=1, \dots, n$, set $\lambda_i = 2 \sigma^2/L_i$.}
\For{ $k=0,1,\ldots$}{ 
Choose $\alpha_k \geq 0$, $\beta_k \geq 0$ and let
\begin{align}
\label{eq:ext.01zFB}
&\widehat z^{\,k} = z^k+\alpha_k(z^k-z^{k-1}), \quad \widetilde{z}^{\, k} = \widehat{z}^{k} + \beta_k (\widehat{z}^{\, k} - z^0 ),\\[3mm]
\label{eq:ext.02zFB}
&\widehat{w}_i^{\,k} = w_i^{k} + \alpha_k (w_i^k - w_i^{k-1}), \quad  \widetilde{w}^{\, k}_{i} = \widehat{w}^{\,k}_{i} + \beta_k (\widehat{w}^{\,k}_i - w^{0}_i) \quad (i =1, \ldots, n-1), \\[3mm]
\label{eq:ext.03zFB}
& \widetilde w_n^{\,k} = -\sum_{i=1}^{n-1}G_i^*\widetilde w_i^{\,k}.
\end{align}
For $i=1, \ldots,n$, let $\overline{z}_i^{\,k} \coloneqq P_{C_i}(G_i\wti z^{\,k})$ and compute
\begin{align}
\label{eq:proxFB_i}
\begin{cases}
x_i^k \coloneqq (\lambda_i B_i+I)^{-1}\left(G_i \widetilde{z}^{\,k} + \lambda_i \widetilde w_i^k - \lambda_i F_i(\overline{z}^{\,k}_i)\right),&\\[3mm]
y_i^k \coloneqq \dfrac{1}{\lambda_i}(G_i \wti z^{\,k}_i + \lambda_i \wti w_i^{\,k}-x_i^k).
\end{cases}
\end{align}
\If{$x_i^k = G_i x_n^k$ \emph{(}$i=1, \ldots, n-1$\emph{)} \emph{and} 
$\displaystyle \sum_{i=1}^{n-1}G_i^*y_i^k  + y_n^k = 0$}
 	{\Return 
 	\begin{align} \lab{eq:returnFB}
 	(\overline z, \overline w_1, \ldots , \overline w_{n-1}) \coloneqq (x_n^k, y_1^k, \dots, y_{n-1}^k).
 	\end{align}}
For $i =1, \dots, n$, let 
\begin{align}\label{eq:epsilon1}
\varepsilon_i^k \coloneqq \dfrac{L_i\normq{x_i^k -  \overline{z}^{\,k}_i}}{4}.
\end{align}
Let the affine functional $\varphi_k(\cdot)$ be as in \eqref{eq:sep.F} (see also \eqref{eq:car.varphi}),
let
\begin{align}\label{def:pk3}
p^{k} \coloneqq (z^{k}, w_1^{k}, \ldots, w_{n-1}^{k}),
\end{align}
set
\begin{align}\label{def:3Hk-Wk}
\begin{aligned}
& H_k =\left\{p\in \boldsymbol{\Ha} \mid \varphi_{k}(p)\leq 0\right\},\\
& W_k =\left\{p\in \boldsymbol{\Ha} \mid \inner{p^0-p^k}{p-p^k}_{\gamma}\leq 0\right\},
\end{aligned}
\end{align}
and compute the next iterate
\begin{align}\label{eq:FBupdt01}
(z^{k+1}, w_1^{k+1}, \ldots , w_{n-1}^{k+1}) = P_{H_k \cap W_k}(p^0).
\end{align}
}
\label{alg:iHPEP-FB}
\end{algorithm}

We  now make some comments regarding Algorithm \ref{alg:iHPEP-FB}:

\begin{enumerate}[label = (\roman*)]
\item Algorithm \ref{alg:iHPEP-FB} is essentially Algorithm \ref{alg:iHPEP} for solving the structural inclusion 
\eqref{eq:Smip} where the triple $(x_i^k, y_i^k, \varepsilon_i^k)$ ($1\leq i\leq n$) satisfying the relative-error condition \eqref{eq:proxT_i} is explicitly computed according to \eqref{eq:proxFB_i} and \eqref{eq:epsilon1}; see Proposition \ref{pro:error-FB} below. We mention that the idea of exploiting structural properties in operator splitting methods by forward-backward steps goes back at least to~\cite{LM79, Pas79},
also appearing more recently (see, e.g., \cite{MS10}) in connection with modern inexact relative-error proximal-point algorithms. 
\item Since Algorithm \ref{alg:iHPEP-FB} is a special instance Algorithm \ref{alg:iHPEP}, it follows that almost all comments and remarks just before or right after the latter algorithm also apply to the former 
(see comments/remarks (i)--(vi) following Algorithm \ref{alg:iHPEP}). 
\item In view of the assumption that the resolvents $(B_i + I)^{-1}$ of $B_i(\cdot)$ ($1\leq i\leq n$) are easy to evaluate, we  obtain that the computation of $x_i^k$ as in \eqref{eq:proxFB_i} essentially depends on the evaluation of the linear operator $G_i$ at $\widetilde z^k$ and of the (monotone) operator $F_i(\cdot)$ at 
$\overline z_i^k$. The main reason to introduce the orthogonal projection $\overline z_i^k$ is to recover feasibility of $G_i \widetilde z^k$ with respect to the $\Dom F_i$ of $F_i(\cdot)$ (see Assumption B2). 
If $\Dom F_i$ is equal to the whole space $\HH_i$, then one could simply set $\overline z_i^k = G_i \widetilde z^k$
and no extra projection onto $C_i$ would be required in the implementation of Algorithm \ref{alg:iHPEP-FB}.
\end{enumerate}

Next we prove that Algorithm \ref{alg:iHPEP-FB} is in fact a special instance of Algorithm \ref{alg:iHPEP}:

\begin{proposition}\label{pro:error-FB}
Let $(x_i^k, y_i^k)$ and $\varepsilon_i^k$ \emph{(}$1\leq i\leq n$\emph{)} be as in \eqref{eq:proxFB_i}
and \eqref{eq:epsilon1}, respectively, and let $\widetilde z^k$ and $\widetilde w_i^k$ \emph{(}$1\leq i\leq n$\emph{)} be as in \eqref{eq:ext.01zFB}--\eqref{eq:ext.03zFB}. Also, let $\lambda_i = 2\sigma^2/L_i$
be defined as in the input of \emph{Algorithm \ref{alg:iHPEP-FB}}. The following holds for all $i=1, \ldots ,n$ and  $k \geq 0$:
\begin{align}\label{relationFB}
\begin{aligned}
& y_i^k \in (F_i^{[\varepsilon_i^k]} + B_i)(x_i^k) \subseteq (F_i + B_i)^{[\varepsilon_i^k]}(x_i^k),\\[2mm]
& \lambda_i y_i^k + x_i^k - (G_i \widetilde{z}^{\,k} + \lambda_i  \widetilde{w}_i^{\,k}) = 0, \quad 
2 \lambda_i \varepsilon_i^k \leq \sigma^2 \normq{G_i \wti z^{k} - x_i^k}.
\end{aligned}
\end{align}
In particular, \emph{Algorithm \ref{alg:iHPEP-FB}} is a special instance of \emph{Algorithm \ref{alg:iHPEP}} 
 \emph{(}with $\lambda_i^k \equiv \lambda_i = 2\sigma^2/L_i$ for all $1\leq i\leq n$\emph{)} for solving \eqref{eq:Smip}.
\end{proposition}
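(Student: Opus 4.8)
The plan is to verify the three relations in \eqref{relationFB} separately for each index $i$, and then read off that the triple $(x_i^k, y_i^k, \varepsilon_i^k)$ produced by \eqref{eq:proxFB_i}--\eqref{eq:epsilon1} satisfies the relative-error criterion \eqref{eq:proxT_i} with $\lambda_i^k \equiv \lambda_i$. The middle identity in \eqref{relationFB} is immediate: rearranging the definition of $y_i^k$ in \eqref{eq:proxFB_i} gives $\lambda_i y_i^k = G_i \widetilde z^{\,k} + \lambda_i \widetilde w_i^{\,k} - x_i^k$, i.e. $\lambda_i y_i^k + x_i^k - (G_i \widetilde z^{\,k} + \lambda_i \widetilde w_i^{\,k}) = 0$, so the ``error'' $e_i^k$ of \eqref{def:ei} vanishes identically. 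For the inclusion, I would start from the resolvent definition of $x_i^k$: since $x_i^k = (\lambda_i B_i + I)^{-1}(G_i \widetilde z^{\,k} + \lambda_i \widetilde w_i^{\,k} - \lambda_i F_i(\overline z_i^{\,k}))$, we have $G_i \widetilde z^{\,k} + \lambda_i \widetilde w_i^{\,k} - \lambda_i F_i(\overline z_i^{\,k}) - x_i^k \in \lambda_i B_i(x_i^k)$; dividing by $\lambda_i>0$ and using the definition of $y_i^k$ yields $y_i^k - F_i(\overline z_i^{\,k}) \in B_i(x_i^k)$, that is, $y_i^k \in F_i(\overline z_i^{\,k}) + B_i(x_i^k)$.

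The crucial point is then that $F_i(\overline z_i^{\,k}) \in F_i^{[\varepsilon_i^k]}(x_i^k)$, which is exactly Lemma \ref{lem:trans} applied to the $L_i^{-1}$-cocoercive map $F_i$ at the pair $(z, z') = (\overline z_i^{\,k}, x_i^k)$: it produces precisely the value $\varepsilon_i^k = L_i \|x_i^k - \overline z_i^{\,k}\|^2/4$ prescribed in \eqref{eq:epsilon1}. To invoke Lemma \ref{lem:trans} I must first confirm that both arguments lie in $\Dom F_i$, which follows from Assumption B2: $\overline z_i^{\,k} = P_{C_i}(G_i \widetilde z^{\,k}) \in C_i \subseteq \Dom F_i$, while $x_i^k$, being a value of the resolvent of $B_i$, lies in $\Dom B_i \subseteq C_i \subseteq \Dom F_i$. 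This establishes the first inclusion $y_i^k \in (F_i^{[\varepsilon_i^k]} + B_i)(x_i^k)$; the second inclusion $(F_i^{[\varepsilon_i^k]} + B_i)(x_i^k) \subseteq (F_i + B_i)^{[\varepsilon_i^k]}(x_i^k)$ then follows from Proposition \ref{pr:teps}\ref{teps.ter} together with $B_i = B_i^{[0]}$ (Proposition \ref{pr:teps}\ref{pr:teps.qui}).

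For the final inequality, substituting $\lambda_i = 2\sigma^2/L_i$ into $\varepsilon_i^k = L_i\|x_i^k - \overline z_i^{\,k}\|^2/4$ produces the exact identity $2\lambda_i \varepsilon_i^k = \sigma^2 \|x_i^k - \overline z_i^{\,k}\|^2$, so it remains to show $\|x_i^k - \overline z_i^{\,k}\| \leq \|G_i \widetilde z^{\,k} - x_i^k\|$. This projection estimate is the only step I expect to require genuine work: I would use that $\overline z_i^{\,k}$ is the projection of $G_i \widetilde z^{\,k}$ onto $C_i$ and that $x_i^k \in \Dom B_i \subseteq C_i$, so the obtuse-angle inequality \eqref{eq:pcr} gives $\langle G_i \widetilde z^{\,k} - \overline z_i^{\,k},\, x_i^k - \overline z_i^{\,k}\rangle \leq 0$; expanding $\|G_i \widetilde z^{\,k} - x_i^k\|^2 = \|G_i \widetilde z^{\,k} - \overline z_i^{\,k}\|^2 - 2\langle G_i \widetilde z^{\,k} - \overline z_i^{\,k},\, x_i^k - \overline z_i^{\,k}\rangle + \|x_i^k - \overline z_i^{\,k}\|^2$ and discarding the two nonnegative leading terms yields the claim. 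To conclude the ``in particular'' assertion, I then combine the three relations: the vanishing error $e_i^k = 0$ eliminates the first term of \eqref{eq:proxT_i}, while $2\lambda_i \varepsilon_i^k \leq \sigma^2\|G_i \widetilde z^{\,k} - x_i^k\|^2 \leq \sigma^2\big(\|G_i \widetilde z^{\,k} - x_i^k\|^2 + \|\lambda_i(\widetilde w_i^{\,k} - y_i^k)\|^2\big)$, so \eqref{eq:proxT_i} holds with $\lambda_i^k \equiv \lambda_i = 2\sigma^2/L_i$; together with $y_i^k \in (F_i + B_i)^{[\varepsilon_i^k]}(x_i^k) = T_i^{[\varepsilon_i^k]}(x_i^k)$, this exhibits Algorithm \ref{alg:iHPEP-FB} as the stated special instance of Algorithm \ref{alg:iHPEP}.
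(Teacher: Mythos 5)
Your proof is correct and follows essentially the same route as the paper's: the identity from the definition of $y_i^k$, the inclusion via $y_i^k \in F_i(\overline z_i^{\,k}) + B_i(x_i^k)$ combined with Lemma \ref{lem:trans} and Proposition \ref{pr:teps}, and the inequality $2\lambda_i\varepsilon_i^k \leq \sigma^2\normq{x_i^k - \overline z_i^{\,k}} \leq \sigma^2\normq{G_i\widetilde z^{\,k} - x_i^k}$ from the projection onto $C_i$. The only difference is that you spell out explicitly (via the obtuse-angle inequality \eqref{eq:pcr}) the projection estimate that the paper dispatches as ``simple algebra,'' which is a harmless elaboration of the same argument.
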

\begin{proof}
Using \eqref{eq:proxFB_i}, the definition of the resolvent $(\lambda_i B_i + I)^{-1}$ and some simple algebraic manipulations we find
$ y_i^k \in F_i(\overline z_i^k) + B_i(x_i^k)$,
which in turn combined with Lemma \ref{lem:trans} and the definition of 
$\varepsilon_i^k$ (see \eqref{eq:epsilon1}) gives the first inclusion in \eqref{relationFB}; 
the second one follows from the first and Proposition \ref{pr:teps}(\emph{\ref{teps.ter}} and \emph{\ref{pr:teps.qui}}).
The identity in \eqref{relationFB} follows directly from the definition of $y_i^k$ as in 
\eqref{eq:proxFB_i}. To finish the proof of \eqref{relationFB}, it remains to prove the inequality. To this end, note that \eqref{eq:epsilon1}, the definition of $\lambda_i$ as $\lambda_i = 2\sigma^2/L_i$ and some simple algebra yield
\begin{align*}
2\lambda_i \varepsilon_i^k \leq \sigma^2\normq{x_i^k - \overline z_i^k}\leq \sigma^2 \normq{G_i \wti z^{k} - x_i^k},
\end{align*}
where in the second inequality we used the definition $\overline{z}_i^{\,k} \coloneqq P_{C_i}(G_i\wti z^{\,k})$ and the inclusion $x_i^k\in C_i$ (because $x_i^k\in \Dom B_i\subseteq C_i$). The last statement of the proposition follows from \eqref{relationFB} and the definitions of Algorithms \ref{alg:iHPEP} and \ref{alg:iHPEP-FB}.
\end{proof}

\mgap

\brema \lab{rem:seifu}
\emph{
In view of Proposition \ref{pro:error-FB} and Corollary \ref{cor:airflo}, from now on in this subsection 
we will assume that Algorithm \ref{alg:iHPEP-FB} doesn't stop.
}
\erema

\mgap

\brema \lab{rem:1042}
\emph{
Using the definition of $\lambda_i^k \equiv \lambda_i = 2\sigma^2/L_i$ in Algorithm \ref{alg:iHPEP-FB}, we see that the condition in Assumption \ref{assu:lambda} is satisfied with
\begin{align} \lab{eq:def.up}
 \underline \lambda \coloneqq \dfrac{2\sigma^2}{\overline L}\quad \mbox{and} \quad
 \overline \lambda \coloneqq \dfrac{2\sigma^2}{\underline L},
\end{align}
where
\begin{align} \lab{eq:def.L}
\overline L \coloneqq \displaystyle \max_{i=1,\dots, n} L_i \quad \mbox{and} \quad
\underline L \coloneqq \displaystyle \min_{i=1,\dots, n} L_i.
\end{align}
}
\erema

\mgap

In what follows we summarize the main results on the strong convergence and iteration-complexity of Algorithm \ref{alg:iHPEP-FB}: 

\begin{theorem}[Strong convergence and iteration-complexity of Algorithm \ref{alg:iHPEP-FB}] \label{theo:FBComplexity}
Consider the sequences evolved by \emph{Algorithm \ref{alg:iHPEP-FB}}, let $d_{0,\gamma}$ denote the distance of $p^0$ to extended-solution set $\Sa_e \neq \emptyset$, let $\overline L, \underline L > 0$ be as in \eqref{eq:def.L} and let $\Omega_{(\alpha, \beta)} > 0$ be as in \eqref{eq:def.omega}. Suppose assumptions \emph{B1--B4} and
\emph{\ref{assu:ab}} are valid (see also \emph{Remarks \ref{rem:adapter} and \ref{rem:1042}}), and that the (cocoercivity) condition 
\eqref{ineq:cocoercivity} also holds for $F_i(\cdot)$ ($1\leq i\leq n$).
Then the following statements are true:
\begin{enumerate}[label = \emph{(\alph*)}]
\item \emph{(}Strong convergence\emph{)} All statements of \emph{Theorem \ref{th:sconv}} also hold for \emph{Algorithm \ref{alg:iHPEP-FB}}.
\item \emph{(}Iteration-complexity\emph{)} For all $k \geq 0$, there exists $\ell \in \{0,1, \ldots,k\}$ such that
\begin{align*}
\begin{cases}
y_i^\ell \in (F_i^{[\varepsilon_i^\ell]} + B_i)(x_i^\ell), \quad i =1, \ldots ,n, & \\[3mm]
\Norm{\sum_{i=1}^{n-1}G_i^\ast y_i^\ell + y_n^\ell} \leq  
\dfrac{n \Big(\displaystyle \max_{i =1, \ldots,n}\normq{G_i}\Big) 
\max\left\{\dfrac{\overline L}{2\sigma^2}, \dfrac{2\sigma^2}{\underline L}\right\}d_{0,\gamma}}{\sqrt{k+1}} 
\left( \dfrac{4\sqrt{\gamma} \max\{1, 1/\gamma\}\sqrt{\Omega_{(\alpha, \beta)}}}{1-\sigma^2}\right), 
&\\[4mm]
\norm{x_i^\ell - G_i x_n^\ell} \leq \dfrac{n \Big(\displaystyle \max_{i =1, \ldots,n}\normq{G_i}\Big) 
\max\left\{\dfrac{\overline L}{2\sigma^2}, \dfrac{2\sigma^2}{\underline L}\right\} d_{0,\gamma}}{\sqrt{k+1}} 
\left( \dfrac{4 \max \{1, 1/\gamma\}\sqrt{\Omega_{(\alpha, \beta)}}}{1-\sigma^2}\right), &\\[5mm]
\hspace{10cm} i=1,\ldots, n-1, &\\[3mm]
\sum_{i=1}^n \varepsilon_i^\ell \leq  \dfrac{n \Big(\displaystyle \max_{i =1, \ldots,n}\normq{G_i}\Big) \max\left\{\dfrac{\overline L}{2\sigma^2}, \dfrac{2\sigma^2}
{\underline L}\right\}d_{0,\gamma}^2}{k+1}\left( \dfrac{4\sigma^2 \max \{1, 1/\gamma\}
\Omega_{(\alpha, \beta)}}{(1-\sigma^2)^2}\right).
\end{cases}
\end{align*}
 \end{enumerate}
\end{theorem}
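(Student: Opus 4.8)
The plan is to exploit the fact, already established in Proposition~\ref{pro:error-FB}, that Algorithm~\ref{alg:iHPEP-FB} is nothing but a particular realization of Algorithm~\ref{alg:iHPEP} in which the proximal parameters are held fixed at $\lambda_i^k \equiv \lambda_i = 2\sigma^2/L_i$. Consequently, both assertions reduce to a verification that the hypotheses of Theorems~\ref{th:sconv} and~\ref{theo:ite-com} are satisfied, followed by a specialization of the generic complexity bounds to these explicit parameter values. No genuinely new analytical content is required: the cocoercivity condition~\eqref{ineq:cocoercivity} has already been consumed in Proposition~\ref{pro:error-FB} to produce a triple $(x_i^k, y_i^k, \varepsilon_i^k)$ meeting the relative-error criterion~\eqref{eq:proxT_i}.

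First I would confirm that the standing assumptions of Section~\ref{sec:main} hold in the present structured setting. By Remark~\ref{rem:adapter}, conditions B1 and B2 guarantee that each $T_i = F_i + B_i$ is maximal monotone, so Assumption~A1 holds; B3 yields A2 and B4 yields A3. Next, Remark~\ref{rem:1042} shows that the constant choice $\lambda_i = 2\sigma^2/L_i$ verifies Assumption~\ref{assu:lambda} with $\underline{\lambda} = 2\sigma^2/\overline L$ and $\overline{\lambda} = 2\sigma^2/\underline L$, where $\overline L$ and $\underline L$ are as in~\eqref{eq:def.L}. Assumption~\ref{assu:ab} is imposed directly in the hypotheses. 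With all these in place, part (a) follows immediately: since Algorithm~\ref{alg:iHPEP-FB} is a special instance of Algorithm~\ref{alg:iHPEP}, Theorem~\ref{th:sconv} applies verbatim and delivers the stated strong convergence of the iterates.

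For part (b), Theorem~\ref{theo:ite-com} supplies the four inequalities \eqref{peryj}--\eqref{eq:wind} carrying the generic constant $\min\{\underline{\lambda}, 1/\overline{\lambda}\}$. The inclusions in the first line are sharpened from $y_i^\ell \in T_i^{[\varepsilon_i^\ell]}(x_i^\ell)$ to $y_i^\ell \in (F_i^{[\varepsilon_i^\ell]} + B_i)(x_i^\ell)$ by invoking the first inclusion in~\eqref{relationFB} of Proposition~\ref{pro:error-FB}. For the remaining three bounds I would substitute the explicit endpoints. The single computation that matters is
\[
\min\!\left\{\underline{\lambda},\, \tfrac{1}{\overline{\lambda}}\right\}
= \min\!\left\{\frac{2\sigma^2}{\overline L},\, \frac{\underline L}{2\sigma^2}\right\},
\]
so its reciprocal equals $\max\{\overline L/(2\sigma^2),\, 2\sigma^2/\underline L\}$, which is precisely the factor appearing in the statement. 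Propagating this reciprocal through~\eqref{des:v_jcom}, \eqref{des:e_jcom} and~\eqref{eq:wind}, while leaving the remaining constants ($n$, $\max_i\normq{G_i}$, the $\gamma$-dependent terms $\sqrt{\gamma}\,\max\{1,1/\gamma\}$ and $\max\{1,1/\gamma\}$, $\sqrt{\Omega_{(\alpha,\beta)}}$ and $\Omega_{(\alpha,\beta)}$, and the $\sigma$-dependent factors) untouched, produces exactly the asserted inequalities.

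The main obstacle here is purely bookkeeping rather than conceptual: one must track carefully how the reciprocal of $\min\{\underline{\lambda}, 1/\overline{\lambda}\}$ folds into the $\max\{\overline L/(2\sigma^2),\, 2\sigma^2/\underline L\}$ prefactor and confirm that every other constant transfers unchanged from Theorem~\ref{theo:ite-com}. Once this substitution is checked, the theorem follows by direct specialization, and no estimate beyond those already proved in Propositions~\ref{pr:sumfin} and~\ref{pro:error-FB} and Lemma~\ref{lem:kyoto} is needed.
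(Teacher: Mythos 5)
Your proposal is correct and follows essentially the same route as the paper, whose proof is exactly this reduction: invoke Proposition~\ref{pro:error-FB} (Algorithm~\ref{alg:iHPEP-FB} is an instance of Algorithm~\ref{alg:iHPEP} with $\lambda_i^k \equiv 2\sigma^2/L_i$), Remarks~\ref{rem:adapter}, \ref{rem:seifu} and~\ref{rem:1042} to verify Assumptions A1--A3 and~\ref{assu:lambda}, and then apply Theorems~\ref{th:sconv} and~\ref{theo:ite-com}, substituting $\min\{\underline{\lambda},1/\overline{\lambda}\}^{-1} = \max\{\overline L/(2\sigma^2),\,2\sigma^2/\underline L\}$ into the generic bounds. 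Your bookkeeping of the constants and the sharpening of the inclusion via~\eqref{relationFB} are both accurate.
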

\begin{proof}
The proof follows directly from Proposition \ref{pro:error-FB}, Remarks \ref{rem:adapter}, \ref{rem:seifu} and \ref{rem:1042}, and Theorems \ref{th:sconv}
and \ref{theo:ite-com}.
\end{proof}

\subsection{A variant of the strongly convergent inertial projective splitting algorithm with forward-backward-forward (Tseng) steps}\label{subsec:TF-B}

In this subsection, assuming that B1--B4 and \eqref{ineq:lipschitz} hold, we propose a 
variant of Algorithm \ref{alg:iHPEP} (Algorithm \ref{alg:iHPEP-Tseng-FB} below) tailored for the structured inclusion \eqref{eq:Smip}.

\mgap

\begin{algorithm}
\caption{A variant of the strongly convergent inertial PS algorithm with forward-backward-forward 
steps for solving \eqref{eq:Smip}.}
\SetAlgoLined
\KwInput{$(z^{-1},w^{-1}_1,\ldots,w^{-1}_{n-1})  = (z^0,w^0_1,\ldots,w^0_{n-1})\in \boldsymbol{\Ha}$, scalars 
$0 < \sigma<1$ and $\gamma >0$. For $i = 1, \dots, n$, set $\lambda_i =  \sigma/L_i$.}
\For{ $k=0,1,\ldots$}{ 
Choose $\alpha_k \geq 0$, $\beta_k \geq 0$ and let
\begin{align}
\label{eq:ext.01zTFB}
&\widehat z^{\,k} = z^k+\alpha_k(z^k-z^{k-1}), \quad \widetilde{z}^{\, k} = \widehat{z}^{k} + \beta_k (\widehat{z}^{\, k} - z^0 ),\\[3mm]
\label{eq:ext.02zTFB}
&\widehat{w}_i^{\,k} = w_i^{k} + \alpha_k (w_i^k - w_i^{k-1}), \quad  \widetilde{w}^{\, k}_{i} = \widehat{w}^{\,k}_{i} + \beta_k (\widehat{w}^{\,k}_i - w^{0}_i) \quad (i =1, \ldots, n-1), \\[3mm]
\label{eq:ext.03zTFB}
& \widetilde w_n^{\,k} = -\sum_{i=1}^{n-1}G_i^*\widetilde w_i^{\,k}.
\end{align}
For $i=1, \dots, n$, let $\overline{z}_i^k \coloneqq P_{C_i}(G_i \wti z^{\,k})$ and compute  
\begin{align} \label{eq:proxTFB_i}
\begin{cases}
x_i^k = (\lambda_i B_i+I)^{-1}\left(G_i \widetilde{z}^{\,k} + \lambda_i \wti w_i^k - \lambda_i F_i(\overline{z}_i^k)\right),&\\[3mm]
y_i^k = \dfrac{G_i \wti z^{\,k} + \lambda_i \wti w_i^k -x_i^k}{\lambda_i} + F_i(x_i^k) - F_i(\overline{z}_i^k).
\end{cases}
\end{align}
\If{$x_i^k = G_i x_n^k$ \emph{(}$i=1, \ldots, n-1$\emph{)} \emph{and} 
$\displaystyle \sum_{i=1}^{n-1}G_i^*y_i^k  + y_n^k = 0$}
 	{\Return 
 	\begin{align} \lab{eq:returnTFB}
 	(\overline z, \overline w_1, \ldots , \overline w_{n-1}) \coloneqq (x_n^k, y_1^k, \dots, y_{n-1}^k).
 	\end{align}}
For $i =1, \dots, n$, set
\begin{align}\label{eq:epsilon1T}
\varepsilon_i^k \coloneqq 0.
\end{align}
Let the affine functional $\varphi_k(\cdot)$ be as in \eqref{eq:sep.F} (see also \eqref{eq:car.varphi}),
let
\begin{align}\label{def:Tpk3}
p^{k} \coloneqq (z^{k}, w_1^{k}, \ldots, w_{n-1}^{k}),
\end{align}
set
\begin{align}\label{def:T3Hk-Wk}
\begin{aligned}
& H_k =\left\{p\in \boldsymbol{\Ha} \mid \varphi_{k}(p)\leq 0\right\},\\
& W_k =\left\{p\in \boldsymbol{\Ha} \mid \inner{p^0-p^k}{p-p^k}_{\gamma}\leq 0\right\},
\end{aligned}
\end{align}
and compute the next iterate
\begin{align}\label{eq:Tupdt01}
(z^{k+1}, w_1^{k+1}, \ldots , w_{n-1}^{k+1}) = P_{H_k \cap W_k}(p^0).
\end{align}
}
\label{alg:iHPEP-Tseng-FB}
\end{algorithm}

\mgap 

Next we make some remarks regarding Algorithm \ref{alg:iHPEP-Tseng-FB}:
\begin{enumerate}[label = (\roman*)]
\item As it was the case for Algorithm \ref{alg:iHPEP-FB}, Algorithm \ref{alg:iHPEP-Tseng-FB} is also a special instance of Algorithm \ref{alg:iHPEP} (see Proposition \ref{pro:error-TFB} below). More precisely, Algorithm \ref{alg:iHPEP-Tseng-FB} is essentially 
Algorithm \ref{alg:iHPEP} for solving \eqref{eq:Smip} where the triple $(x_i^k, y_i^k, \varepsilon_i^k)$ ($1\leq i\leq n$) satisfying condition \eqref{eq:proxT_i} is also explicitly computed but now according to \eqref{eq:proxTFB_i}
and \eqref{eq:epsilon1T}. The idea of using forward-backward-forward steps as \eqref{eq:proxTFB_i} was introduced by P. Tseng in the seminal paper~\cite{Tse00} (see also \cite{MS10}).
\item As it was also observed for Algorithm \ref{alg:iHPEP-FB},  since Algorithm \ref{alg:iHPEP-Tseng-FB} is a special instance Algorithm \ref{alg:iHPEP}, it follows that almost all the comments/remarks (i)--(vi) 
just before or right after Algorithm \ref{alg:iHPEP} also apply to Algorithm \ref{alg:iHPEP-Tseng-FB}.
\item Algorithm \ref{alg:iHPEP-Tseng-FB} operates in a more flexible setting, regarding the monotone inclusion problem \eqref{eq:Smip} -- because cocoercivity is more restrictive than Lipschitz continuity (see \eqref{ineq:cocoercivity} and \eqref{ineq:lipschitz}) --, at the cost, when compared to 
\eqref{eq:proxFB_i}, of an extra evaluation of the single-valued operator $F_i(\cdot)$ at $x_i^k$.
\end{enumerate}

\mgap

Algorithm \ref{alg:iHPEP-Tseng-FB} is also a special instance of Algorithm \ref{alg:iHPEP}:

\begin{proposition}\label{pro:error-TFB}
Let $(x_i^k, y_i^k)$ \emph{(}$1\leq i\leq n$\emph{)} be as in \eqref{eq:proxTFB_i} and let $\widetilde z^k$ and $\widetilde w_i^k$ \emph{(}$1\leq i\leq n$\emph{)} be as in \eqref{eq:ext.01zTFB}--\eqref{eq:ext.03zTFB}. Also, let $\lambda_i = \sigma/L_i$
be defined as in the input of \emph{Algorithm \ref{alg:iHPEP-Tseng-FB}}.
The following holds, for all $i=1, \ldots ,n$ and  $k \geq 0$:
\begin{align}\label{relationTFB}
\begin{aligned}
& y_i^k \in  (F_i + B_i)(x_i^k),\\[1mm]
& \norm{\lambda_i y_i^k + x_i^k  - (G_i \wti z^{\,k} + \lambda_i \wti w_i^k)} \leq 
\sigma\norm{G_i \wti z^{\,k}_i - x_i^k}.
\end{aligned}
\end{align}
In particular, \emph{Algorithm \ref{alg:iHPEP-Tseng-FB}} is special instance of \emph{Algorithm \ref{alg:iHPEP}}
\emph{(}with $\lambda_i^k \equiv \lambda_i = \sigma/L_i$ for all $i =1, \dots, n$\emph{)} for solving \eqref{eq:Smip}.
\end{proposition}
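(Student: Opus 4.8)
The plan is to follow the same strategy as in the proof of Proposition~\ref{pro:error-FB}: verify the two assertions in \eqref{relationTFB}, and then check that the triple $(x_i^k, y_i^k, \varepsilon_i^k)$ with $\varepsilon_i^k = 0$ (as in \eqref{eq:epsilon1T}) satisfies the relative-error condition \eqref{eq:proxT_i} of Algorithm~\ref{alg:iHPEP} with $\lambda_i^k \equiv \lambda_i = \sigma/L_i$. The structure is essentially definition-chasing plus one genuine estimate.

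First I would establish the inclusion $y_i^k \in (F_i + B_i)(x_i^k)$ by unpacking the resolvent. The definition of $x_i^k$ in the first line of \eqref{eq:proxTFB_i} is equivalent to $G_i \wti z^{\,k} + \lambda_i \wti w_i^k - \lambda_i F_i(\overline z_i^k) - x_i^k \in \lambda_i B_i(x_i^k)$; dividing by $\lambda_i$ and adding $F_i(x_i^k)$ to both sides, the left-hand side becomes precisely the expression defining $y_i^k$ in the second line of \eqref{eq:proxTFB_i}, so that $y_i^k \in F_i(x_i^k) + B_i(x_i^k)$. Here $F_i(x_i^k)$ is well defined since $x_i^k \in \Dom B_i \subseteq C_i \subseteq \Dom F_i$ by Assumption~B2.

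Next I would compute the residual directly from the formula for $y_i^k$: the ``backward'' terms cancel and one is left with the clean identity $\lambda_i y_i^k + x_i^k - (G_i \wti z^{\,k} + \lambda_i \wti w_i^k) = \lambda_i\big(F_i(x_i^k) - F_i(\overline z_i^k)\big)$, which is exactly the algebraic effect of Tseng's forward correction. Taking norms, invoking the Lipschitz estimate \eqref{ineq:lipschitz}, and using the choice $\lambda_i = \sigma/L_i$ (so $\lambda_i L_i = \sigma$) yields $\norm{\lambda_i y_i^k + x_i^k - (G_i \wti z^{\,k} + \lambda_i \wti w_i^k)} \leq \sigma\norm{x_i^k - \overline z_i^k}$.

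The step requiring the most care is replacing $\norm{x_i^k - \overline z_i^k}$ by $\norm{G_i \wti z^{\,k} - x_i^k}$ to reach the form stated in \eqref{relationTFB}. Since $\overline z_i^k = P_{C_i}(G_i \wti z^{\,k})$ and $x_i^k \in C_i$ (because $x_i^k \in \Dom B_i \subseteq C_i$), I would apply the projection inequality \eqref{eq:pcr} of Lemma~\ref{lm:proj} with $x = G_i \wti z^{\,k}$ and $z = x_i^k$, giving $\inner{G_i \wti z^{\,k} - \overline z_i^k}{x_i^k - \overline z_i^k} \leq 0$; expanding $\normq{G_i \wti z^{\,k} - x_i^k}$ about $\overline z_i^k$ then forces $\normq{x_i^k - \overline z_i^k} \leq \normq{G_i \wti z^{\,k} - x_i^k}$. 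Finally, to conclude that Algorithm~\ref{alg:iHPEP-Tseng-FB} is a special instance of Algorithm~\ref{alg:iHPEP}, I would verify \eqref{eq:proxT_i} with $\varepsilon_i^k = 0$: the inclusion $y_i^k \in T_i^{[0]}(x_i^k) = T_i(x_i^k)$ follows from the first line of \eqref{relationTFB} and Proposition~\ref{pr:teps}\emph{\ref{pr:teps.qui}}, while squaring the second line of \eqref{relationTFB} gives $\normq{\lambda_i y_i^k + x_i^k - (G_i \wti z^{\,k} + \lambda_i \wti w_i^k)} \leq \sigma^2 \normq{G_i \wti z^{\,k} - x_i^k} \leq \sigma^2\big(\normq{G_i \wti z^{\,k} - x_i^k} + \normq{\lambda_i(\wti w_i^k - y_i^k)}\big)$, which is exactly \eqref{eq:proxT_i} once the term $2\lambda_i \varepsilon_i^k$ vanishes.
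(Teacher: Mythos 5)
Your proposal is correct and follows essentially the same route as the paper: unpack the resolvent in \eqref{eq:proxTFB_i} to get $y_i^k \in (F_i+B_i)(x_i^k)$, derive the identity $\lambda_i y_i^k + x_i^k - (G_i \wti z^{\,k} + \lambda_i \wti w_i^k) = \lambda_i\bigl(F_i(x_i^k) - F_i(\overline z_i^k)\bigr)$, bound it via the $L_i$-Lipschitz property and $\lambda_i = \sigma/L_i$, pass from $\norm{x_i^k - \overline z_i^k}$ to $\norm{G_i\wti z^{\,k} - x_i^k}$ using the projection onto $C_i$ and $x_i^k \in \Dom B_i \subseteq C_i$, and finally verify \eqref{eq:proxT_i} with $\varepsilon_i^k = 0$. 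The paper presents only a sketch deferring to Proposition \ref{pro:error-FB}; your write-up fills in exactly those deferred details (the projection inequality via \eqref{eq:pcr} rather than nonexpansiveness is an immaterial variation).
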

\begin{proof}
The proof follows the same pattern as that of Proposition~\ref{pro:error-FB}. For convenience, we present a sketch.
From \eqref{eq:proxTFB_i} and some simple algebraic manipulations, we obtain the inclusion in \eqref{relationTFB}.
To proof the inequality, note first that, from the identity in \eqref{eq:proxTFB_i}, we have 
$ \lambda_i y_i^k + x_i^k  - (G_i \wti z^{\,k} + \lambda_i \wti w_i^k) = \lambda_i (F_i(x_i^k) - F_i(\overline z_i^k)) $, which in turn yields the desired result as a consequence of the $ L_i $-Lipschitz continuity of $ F_i(\cdot) $ and the definition of
$ \lambda_i $. The final claim of the proposition follows directly from \eqref{relationTFB} and the definitions of Algorithms~\ref{alg:iHPEP-Tseng-FB} and~\ref{alg:iHPEP}.
\end{proof}

\mgap

\brema \lab{rem:seifu02}
\emph{
In view of Proposition \ref{pro:error-TFB} and Corollary \ref{cor:airflo}, analogously to Remark \ref{rem:seifu}, from now on in this subsection 
we will assume that Algorithm \ref{alg:iHPEP-Tseng-FB} doesn't stop.
}
\erema

\mgap

\brema \lab{rem:1043}
\emph{
Similarly to \eqref{eq:def.up} and \eqref{eq:def.L}, we now define
\begin{align} \lab{eq:def.upb}
 \underline \lambda \coloneqq \dfrac{\sigma}{\overline L}\quad \mbox{and} \quad
 \overline \lambda \coloneqq \dfrac{\sigma}{\underline L},
\end{align}
where
\begin{align} \lab{eq:def.Lb}
\overline L \coloneqq \displaystyle \max_{i=1,\dots, n} L_i \quad \mbox{and} \quad
\underline L \coloneqq \displaystyle \min_{i=1,\dots, n} L_i.
\end{align}
Recall that, in this subsection, $L_i$ $(1\leq i\leq n)$ denotes the Lipschitz constant of $F_i(\cdot)$ 
(see \eqref{ineq:lipschitz}).
}
\erema

\mgap

\begin{theorem}[Strong convergence and iteration-complexity of Algorithm \ref{alg:iHPEP-Tseng-FB}]\label{theo:TFBComplexity}
Consider the sequences evolved by \emph{Algorithm \ref{alg:iHPEP-Tseng-FB}}, let $d_{0,\gamma}$ denote the distance of $p^0$ to extended-solution set $\Sa_e \neq \emptyset$, let $\overline L, \underline L > 0$ be as in \eqref{eq:def.Lb} and let $\Omega_{(\alpha, \beta)} > 0$ be as in \eqref{eq:def.omega}. Suppose assumptions \emph{B1--B4} and
\emph{\ref{assu:ab}} are valid (see also \emph{Remarks \ref{rem:adapter} and \ref{rem:1042}}), and that the (Lipschitz) condition 
\eqref{ineq:lipschitz} also holds on $F_i(\cdot)$ ($1\leq i\leq n$).
Then the following statements are true:
\begin{enumerate}[label = \emph{(\alph*)}]
\item \emph{(}Strong convergence\emph{)} All statements of \emph{Theorem \ref{th:sconv}} also hold for \emph{Algorithm \ref{alg:iHPEP-Tseng-FB}}
\item \emph{(}Iteration-complexity\emph{)} For all $k \geq 0$, there exists $\ell \in \{0,1, \ldots,k\}$ such that
\begin{align*}
\begin{cases}
y_i^\ell \in (F_i + B_i)(x_i^\ell), \quad i =1, \ldots ,n, & \\[3mm]
\Norm{\sum_{i=1}^{n-1}G_i^\ast y_i^\ell + y_n^\ell} \leq  
\dfrac{n \Big(\displaystyle \max_{i =1, \ldots,n}\normq{G_i}\Big) 
\max\left\{\dfrac{\overline L}{\sigma}, \dfrac{\sigma}{\underline L}\right\}d_{0,\gamma}}{\sqrt{k+1}} 
\left( \dfrac{4\sqrt{\gamma} \max\{1, 1/\gamma\}\sqrt{\Omega_{(\alpha, \beta)}}}{1-\sigma^2}\right), 
&\\[4mm]
\norm{x_i^\ell - G_i x_n^\ell} \leq \dfrac{n \Big(\displaystyle \max_{i =1, \ldots,n}\normq{G_i}\Big) 
\max\left\{\dfrac{\overline L}{\sigma}, \dfrac{\sigma}{\underline L}\right\} d_{0,\gamma}}{\sqrt{k+1}} 
\left( \dfrac{4 \max \{1, 1/\gamma\}\sqrt{\Omega_{(\alpha, \beta)}}}{1-\sigma^2}\right), &\\[5mm]
\hspace{10cm} i=1,\ldots, n-1.
\end{cases}
\end{align*}
\end{enumerate}
\end{theorem}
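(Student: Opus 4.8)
The plan is to deduce Theorem~\ref{theo:TFBComplexity} from the corresponding results for the master algorithm, namely Theorem~\ref{th:sconv} (strong convergence) and Theorem~\ref{theo:ite-com} (iteration-complexity), exactly along the lines used for the forward-backward variant in Theorem~\ref{theo:FBComplexity}. The crucial reduction step is Proposition~\ref{pro:error-TFB}, which certifies that the triple $(x_i^k, y_i^k, \varepsilon_i^k)$ produced by the Tseng update \eqref{eq:proxTFB_i} together with $\varepsilon_i^k = 0$ from \eqref{eq:epsilon1T} satisfies the relative-error criterion \eqref{eq:proxT_i} driving Algorithm~\ref{alg:iHPEP}. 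Consequently, Algorithm~\ref{alg:iHPEP-Tseng-FB} is a genuine instance of Algorithm~\ref{alg:iHPEP} with the constant proximal stepsizes $\lambda_i^k \equiv \lambda_i = \sigma/L_i$, and every conclusion valid for the latter transfers automatically.

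First I would verify that all hypotheses required by Theorems~\ref{th:sconv} and~\ref{theo:ite-com} are in force under assumptions B1--B4. By Remark~\ref{rem:adapter}, $T_i = F_i + B_i$ is maximal monotone, so Assumptions A1--A3 hold; Assumption~\ref{assu:ab} on the inertial parameters is assumed directly; Remark~\ref{rem:seifu02} lets us assume the algorithm does not stop (as in the FB case); and Assumption~\ref{assu:lambda} is met with the explicit bounds $\underline\lambda = \sigma/\overline L$ and $\overline\lambda = \sigma/\underline L$ recorded in Remark~\ref{rem:1043} (see~\eqref{eq:def.upb}), since the stepsizes are constant and hence trivially bounded away from $0$ and $+\infty$. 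With these checks in place, part~(a) follows immediately by invoking Theorem~\ref{th:sconv}, noting that the limiting inclusions $y_i^k \in (F_i + B_i)(x_i^k)$ are precisely those established in Proposition~\ref{pro:error-TFB}.

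For part~(b), I would specialize the generic bounds of Theorem~\ref{theo:ite-com} by substituting the Tseng-specific stepsize constants. The quantity $\min\{\underline\lambda,\, 1/\overline\lambda\}$ appearing in the complexity estimates becomes $\min\{\sigma/\overline L,\, \underline L/\sigma\}$, whose reciprocal is exactly $\max\{\overline L/\sigma,\, \sigma/\underline L\}$; feeding this into \eqref{des:v_jcom} and \eqref{des:e_jcom} reproduces the stated bounds for $\Norm{\sum_{i=1}^{n-1}G_i^\ast y_i^\ell + y_n^\ell}$ and $\norm{x_i^\ell - G_i x_n^\ell}$. Because $\varepsilon_i^k \equiv 0$ here, the $\varepsilon$-bound \eqref{eq:wind} collapses to $0$ and is simply omitted from the statement, which is why the Tseng theorem carries only three (rather than four) estimates.

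I do not anticipate a genuine obstacle, since the substantive analytical work is already encapsulated in Proposition~\ref{pro:error-TFB} and in the master theorems. The only points demanding care are the algebraic identity relating $\min\{\underline\lambda,\, 1/\overline\lambda\}$ to $\max\{\overline L/\sigma,\, \sigma/\underline L\}$ and the routine bookkeeping of constants; in particular one must track that the Lipschitz constant $L_i$ enters through $\lambda_i = \sigma/L_i$ rather than through $2\sigma^2/L_i$ as in the cocoercive case, which is the sole structural difference from the proof of Theorem~\ref{theo:FBComplexity}.
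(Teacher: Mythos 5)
Your proposal is correct and follows exactly the paper's own route: the paper proves this theorem by citing Proposition~\ref{pro:error-TFB}, Remarks~\ref{rem:adapter}, \ref{rem:seifu02} and~\ref{rem:1043}, and Theorems~\ref{th:sconv} and~\ref{theo:ite-com}, which is precisely your reduction, and your constant bookkeeping (the identity $1/\min\{\underline\lambda, 1/\overline\lambda\} = \max\{\overline L/\sigma,\, \sigma/\underline L\}$ under $\lambda_i = \sigma/L_i$, and the disappearance of the $\varepsilon$-bound since $\varepsilon_i^k \equiv 0$) is accurate.
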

\begin{proof}
The proof follows directly from Proposition \ref{pro:error-TFB}, Remarks \ref{rem:adapter}, \ref{rem:seifu02} and \ref{rem:1043}, and Theorems \ref{th:sconv}
and \ref{theo:ite-com}.
\end{proof}

\section{Conclusions} \lab{sec:conc}
In this paper, we proposed and analyzed an inertial inexact projective splitting (PS) algorithm with strong convergence guarantees for solving multi-term monotone inclusion problems. The proposed framework offers greater flexibility while preserving key theoretical properties. Our main contribution lies in establishing the strong convergence of the iterates to the best approximation of the initial guess in the extended-solution set. To the best of our knowledge, this is the first instance of an inertial (relative-error) inexact PS-type algorithm with such a guarantee. Additionally, we introduced two algorithmic variants incorporating forward-backward and forward-backward-forward (Tseng) steps, further expanding the applicability of the PS method.
Inertial methods, which trace back to the celebrated heavy-ball and Nesterov’s accelerated gradient algorithms, have gained significant attention in the context of proximal-type algorithms since the seminal work~\cite{AA01}. As the introduction of inertial variants of known methods is often associated with improved convergence behavior in practice (see, e.g.,~\cite{Alv+20}), we believe that the inertial algorithms proposed in this paper have strong potential to outperform their non-inertial counterparts in terms of practical efficiency.

{\small
\section*{Acknowledgments}
The work of M.~M.~A. was partially supported by CNPq grant 308036/2021-2 and 
Fundação de Amparo a Pesquisa e Inovação do Estado de Santa Catarina (FAPESC), Edital 21/2024, Grant 2024TR002238.
The work of J.~E.~N.~C. was partially supported by CAPES 
scholarship 88887.829718/2023-00.
The work of R.~T.~M. was partially supported by  BASAL fund FB210005 for center of 
excellence from ANID-Chile.
}

\appendix

\section{Well-definiteness of Algorithm \ref{alg:iHPEP}} \lab{app:well}

The following proposition guarantees the well-definiteness of Algorithm \ref{alg:iHPEP}. 
Note that the definition of next iterate $(z^{k+1}, w_1^{k+1}, \ldots , w_{n-1}^{k+1})$ as in \eqref{eq:updt01} depends on the nonemptyness of the closed and convex set $H_k \cap W_k$. Also, recall we are assuming that the extended-solution set $\Sa_e$ is nonempty.

\mgap

\bprop \label{pro:inclutionset}
At every iteration $k\geq 0$, if \emph{Algorithm \ref{alg:iHPEP}} doesn't stop, then
the following conditions hold: $\Sa_e\subseteq H_k \cap W_k$, the next iterate $(z^{k+1}, w_1^{k+1}, \ldots , w_{n-1}^{k+1})$ as in \eqref{eq:updt01} is well-defined, and $\Sa_e \subseteq W_{k+1}$. 
\eprop
\bproo
Let's proceed by induction on $k\geq 0$. Let $k = 0$ and assume that Algorithm \ref{alg:iHPEP} doesn't stop. In this case, the half-spaces $H_0$ and $W_0$, as in \eqref{def:Hk-Wk}, can be defined and by Lemma \ref{lem:grad.sep}\emph{\ref{lem:grad.sep.ter}}, combined with the definition of $H_0$, we have 
$\Sa_e\subseteq H_0$. Since, trivially, $W_0 = \HHH$, then it follows that 
$\Sa_e\subseteq H_0\cap W_0$. As the extended-solution set $\Sa_e$ is nonempty, it follows that $H_0\cap W_0$ is also nonempty; obviously, it is also closed and convex. Hence, the next iterate 
$p^1 \coloneqq (z^1, w_1^1, \ldots , w_{n-1}^1) = P_{H_0\cap W_0}(p^0)$  is well-defined 
and (by Eq. \eqref{eq:pcr} in Lemma \ref{lm:proj}) we have 
$\inner{p^0 - p^1}{p - p^1}_\gamma \leq 0$ for all $p\in H_0\cap W_0$. In particular, the latter inequality also holds for all $p\in \Sa_e$, because $\Sa_e\subseteq H_0\cap W_0$, and this means exactly 
$\Sa_e \subseteq W_1$. We then conclude that the proposition holds for $k = 0$.

Suppose now it holds for some $k\geq 0$ and let's proof it for $k + 1$, assuming that we reached iteration 
$k +1$, that is, that Algorithm \ref{alg:iHPEP} doesn't stop at iteration $k\geq 0$: $\Sa_e\subseteq H_k \cap W_k$, the next iterate $(z^{k+1}, w_1^{k+1}, \ldots , w_{n-1}^{k+1})$ as in \eqref{eq:updt01} is well-defined, and $\Sa_e \subseteq W_{k+1}$. Then, if Algorithm \ref{alg:iHPEP} doesn't stop at iteration $k + 1$, it follows that
$\Sa_e \subseteq H_{k+1}$ (by Lemma \ref{lem:grad.sep}\emph{\ref{lem:grad.sep.ter}} and the definition of 
$H_{k+1}$). So, $\Sa_e \subseteq H_{k+1}\cap W_{k+1}$ and since $\Sa_e$ is nonempty, and $H_{k+1}\cap W_{k+1}$ is closed and convex, we have that 
$p^{k+2} \coloneqq (z^{k+2}, w_1^{k+2}, \ldots , w_{n-1}^{k+2}) = P_{H_{k+1}\cap W_{k+1}}(p^0)$
is well-defined and $\inner{p^0 - p^{k+2}}{p - p^{k+2}}_\gamma \leq 0$ for all $p \in H_{k+1}\cap W_{k+1}$. 
Using the inclusion $\Sa_e \subseteq H_{k+1}\cap W_{k+1}$, we have that the latter inequality also holds for all $p\in \Sa_e$, which is equivalent to say that $\Sa_e\subseteq W_{k+2}$. This concludes the induction process and the proof of the proposition.
\eproo

\section{Auxiliary results}
\lab{app:ar}

Throughout this section, $\HH$ and $\GG$ denote real Hilbert spaces.  The next result appeared 
in \cite{Alv22}, as a generalization of a previous result due to H. H. Bauschke:

\begin{lemma} \label{lem:MA}
Let $T \colon \HH \tos \HH$ be a maximal monotone operator and $V$ be a closed linear subspace of $\HH$. 
Let $\seq{(x^k, u^k)}$ be a sequence in $\Gra T^{[\varepsilon^k]}$ converging weakly to an element
$(\overline x, \overline u)$ in  $\HH \times \HH$ and assume that $\varepsilon^k \to 0$. If $P_{V^\perp} x^k \to 0$ and 
$P_V u^k \to 0$ \emph{(}$P_V$ and $P_{V^\perp}$ denote the orthogonal projection onto $V$ and $V^\perp$, respectively\emph{)}, 
then $(\overline x, \overline u) \in (\Gra T) \cap (V \times V^\perp)$ and $\inner{x^k}{u^k} \to \inner{\overline x}{\overline u} = 0$.
\end{lemma}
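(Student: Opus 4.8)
The plan is to prove the three conclusions in the order that makes each available to the next: first the subspace memberships $\overline x\in V$ and $\overline u\in V^\perp$, then the scalar-product convergence $\inner{x^k}{u^k}\to 0$, and finally the graph inclusion $(\overline x,\overline u)\in\Gra T$. For the memberships, I would use that $P_V$ and $P_{V^\perp}$ are bounded linear operators and hence weakly continuous: from $x^k\wto\overline x$ we get $P_{V^\perp}x^k\wto P_{V^\perp}\overline x$, while the hypothesis $P_{V^\perp}x^k\to 0$ gives (strong, hence weak) convergence to $0$; uniqueness of weak limits then forces $P_{V^\perp}\overline x=0$, i.e., $\overline x\in V$. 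The symmetric argument applied to $P_Vu^k\to 0$ yields $\overline u\in V^\perp$, and since $V\perp V^\perp$ this already establishes $\inner{\overline x}{\overline u}=0$.

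The step I expect to be the main obstacle is the convergence $\inner{x^k}{u^k}\to 0$, because the scalar product of two sequences that converge only weakly need not converge to the product of the limits. The resolution is to exploit the orthogonal decomposition forced by the hypotheses: writing $x^k=P_Vx^k+P_{V^\perp}x^k$ and $u^k=P_Vu^k+P_{V^\perp}u^k$ and discarding the two mixed terms (which vanish because $V\perp V^\perp$), one obtains
\[
\inner{x^k}{u^k}=\inner{P_Vx^k}{P_Vu^k}+\inner{P_{V^\perp}x^k}{P_{V^\perp}u^k}.
\]
Both $\seq{x^k}$ and $\seq{u^k}$ are bounded, being weakly convergent, so by Cauchy--Schwarz the first summand is bounded by $\norm{P_Vx^k}\,\norm{P_Vu^k}\to 0$ (since $P_Vu^k\to 0$) and the second by $\norm{P_{V^\perp}x^k}\,\norm{P_{V^\perp}u^k}\to 0$ (since $P_{V^\perp}x^k\to 0$). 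Hence $\inner{x^k}{u^k}\to 0$, which together with the previous paragraph gives $\inner{x^k}{u^k}\to 0=\inner{\overline x}{\overline u}$.

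Finally, to obtain $(\overline x,\overline u)\in\Gra T$ I would appeal to maximality via Proposition \ref{pr:teps}\iteref{pr:teps.qui}, which gives $T=T^{[0]}$; it therefore suffices to verify $\inner{\overline x-y}{\overline u-v}\ge 0$ for every $(y,v)\in\Gra T$. Fixing such a pair, the inclusion $u^k\in T^{[\varepsilon^k]}(x^k)$ and the definition \eqref{def:enlarg} yield $\inner{x^k-y}{u^k-v}\ge-\varepsilon^k$. Expanding and passing to the limit, the term $\inner{x^k}{u^k}$ tends to $0$ by the obstacle step, while $\inner{x^k}{v}\to\inner{\overline x}{v}$ and $\inner{y}{u^k}\to\inner{y}{\overline u}$ follow from weak convergence; since $\varepsilon^k\to 0$, this produces $\inner{\overline x-y}{\overline u-v}\ge 0$. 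As $(y,v)\in\Gra T$ is arbitrary, $\overline u\in T^{[0]}(\overline x)=T(\overline x)$, i.e., $(\overline x,\overline u)\in\Gra T$, which completes the argument. Note that the weaker hypothesis here (only $u^k\wto\overline u$, not strongly) is exactly what prevents a direct appeal to Proposition \ref{pr:teps}\iteref{pr:teps.sab}, so the maximality characterization together with the decomposition-based scalar-product limit is the essential device.
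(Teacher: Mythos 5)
Your proof is correct. Note that the paper itself contains no proof of this lemma: it is stated in Appendix \ref{app:ar} as a quotation from the reference \cite{Alv22} (a generalization of a theorem of H.~H.~Bauschke), so there is no in-paper argument to compare against; your proposal must therefore stand on its own, and it does. The three steps are each sound: weak continuity of the bounded operators $P_V$ and $P_{V^\perp}$ together with uniqueness of weak limits gives $\overline x \in V$ and $\overline u \in V^\perp$, hence $\inner{\overline x}{\overline u}=0$; the orthogonal decomposition $\inner{x^k}{u^k} = \inner{P_V x^k}{P_V u^k} + \inner{P_{V^\perp}x^k}{P_{V^\perp}u^k}$, combined with boundedness of the weakly convergent sequences and Cauchy--Schwarz, removes the usual weak-times-weak obstruction and yields $\inner{x^k}{u^k}\to 0$; and the enlargement inequality $\inner{x^k-y}{u^k-v}\ge -\varepsilon^k$ then passes to the limit term by term, giving $\inner{\overline x-y}{\overline u-v}\ge 0$ for all $(y,v)\in \Gra T$, so that $\overline u\in T^{[0]}(\overline x)=T(\overline x)$ by Proposition \ref{pr:teps}\emph{\ref{pr:teps.qui}}. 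This is essentially the classical argument behind Bauschke's original result (the case $\varepsilon^k\equiv 0$), extended verbatim to enlargements, and your closing observation is apt: Proposition \ref{pr:teps}\emph{\ref{pr:teps.sab}} is not directly applicable precisely because it demands strong convergence of $u^k$, which is why the decomposition-based limit $\inner{x^k}{u^k}\to 0$ is the indispensable device.
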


\mgap

The  lemma below is a generalization of \cite[Proposition 2.4]{ACS14}, and its proof, which we present here, follows the same ideas of that proposition's proof.

\mgap

\begin{lemma}\label{lem:MA2}
Let $A \colon \HH \tos \HH$ and $B \colon \GG \tos \GG$ be maximal monotone operators and let $G \colon \Ha \to \GG$ be a bounded linear operator. 
Assume that $\seq{(r^k, a^k)}$ and $\seq{(s^k, b^k)}$ are sequences in $\Gra A^{[\varepsilon^{a, k}]}$ and
$\Gra B^{[\varepsilon^{b, k}]}$, respectively, such that $r^k \wto \overline{r}\in \HH$ and
$b^k \wto \overline{b}\in \GG$, where $\varepsilon^{a, k}\to 0$ and $\varepsilon^{b, k}\to 0$.
If $Gr^k - s^k \to 0$ and $a^k + G^\ast b^k \to 0$, then
$(\overline{r}, -G^\ast \overline{b})\in \Gra A$ and $(G\overline{r}, \overline{b})\in \Gra B$.
\end{lemma}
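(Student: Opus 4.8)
The plan is to transfer the statement to the product space $\HH \times \GG$ and then apply Lemma~\ref{lem:MA}. First I would set $T \coloneqq A \times B$, which is maximal monotone on $\HH \times \GG$ as a product of maximal monotone operators, and introduce the closed linear subspace $V \coloneqq \Gra G = \{(x, Gx) \mid x \in \HH\}$ (closed since $G$ is bounded). A short computation of the orthogonal complement relative to the usual product inner product gives $V^\perp = \{(-G^\ast v, v) \mid v \in \GG\}$. By Lemma~\ref{lem:prope}, the assumptions $a^k \in A^{[\varepsilon^{a,k}]}(r^k)$ and $b^k \in B^{[\varepsilon^{b,k}]}(s^k)$ combine into $(a^k, b^k) \in T^{[\varepsilon^k]}(r^k, s^k)$ with $\varepsilon^k \coloneqq \varepsilon^{a,k} + \varepsilon^{b,k} \to 0$; that is, $\seq{((r^k, s^k), (a^k, b^k))}$ lies in $\Gra T^{[\varepsilon^k]}$.

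Next I would verify the weak-convergence hypothesis of Lemma~\ref{lem:MA}. Since $G$ is bounded and $Gr^k - s^k \to 0$ strongly, $r^k \wto \overline r$ forces $s^k = Gr^k - (Gr^k - s^k) \wto G\overline r$; symmetrically, $b^k \wto \overline b$ together with $a^k + G^\ast b^k \to 0$ forces $a^k \wto -G^\ast \overline b$. Hence $(r^k, s^k) \wto (\overline r, G\overline r) \in V$ and $(a^k, b^k) \wto (-G^\ast \overline b, \overline b) \in V^\perp$, so the prospective limit already sits in $V \times V^\perp$. To check the two strong conditions, I would compute the projections explicitly: $P_{V^\perp}(u, v) = (-G^\ast w, w)$ with $w = (I + GG^\ast)^{-1}(v - Gu)$, and $P_V(u, v) = (x, Gx)$ with $x = (I + G^\ast G)^{-1}(u + G^\ast v)$. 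Because $(I + GG^\ast)^{-1}$ and $(I + G^\ast G)^{-1}$ are nonexpansive, evaluating at $(r^k, s^k)$ and at $(a^k, b^k)$ yields $\norm{P_{V^\perp}(r^k, s^k)} \leq (1 + \normq{G})^{1/2}\norm{Gr^k - s^k} \to 0$ and $\norm{P_V(a^k, b^k)} \leq (1 + \normq{G})^{1/2}\norm{a^k + G^\ast b^k} \to 0$.

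With all hypotheses in place, Lemma~\ref{lem:MA} gives $\big((\overline r, G\overline r), (-G^\ast \overline b, \overline b)\big) \in \Gra T = \Gra(A \times B)$, i.e.\ $(-G^\ast \overline b, \overline b) \in A(\overline r) \times B(G\overline r)$. Reading off the two coordinates yields exactly $-G^\ast \overline b \in A(\overline r)$ and $\overline b \in B(G\overline r)$, the claimed inclusions. The conceptual obstacle is that one cannot conclude directly from the weak--strong closure property Proposition~\ref{pr:teps}\emph{\ref{pr:teps.sab}}, since that result requires strong convergence of the image variable, whereas here both $a^k$ and $s^k$ converge only weakly; the product-space/subspace device of Lemma~\ref{lem:MA} is precisely what compensates for this, with the extra conditions $P_{V^\perp}(r^k,s^k)\to 0$ and $P_V(a^k,b^k)\to 0$ supplying the strong control that is missing. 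The remaining work is the routine bookkeeping of passing the weak and strong limits through the bounded operators $G$ and $G^\ast$.
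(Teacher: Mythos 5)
Your proof is correct and follows essentially the same route as the paper's: embed into the product space $\HH\times\GG$ via $T = A\times B$ and the graph subspace $V = \Gra G$, combine the enlargement inclusions with Lemma~\ref{lem:prope}, verify the weak limits and the strong conditions $P_{V^\perp}(r^k,s^k)\to 0$, $P_V(a^k,b^k)\to 0$ through the explicit projection formulas, and conclude with Lemma~\ref{lem:MA}. The only cosmetic difference is that you derive the projection bounds directly from nonexpansiveness of $(I+GG^\ast)^{-1}$ and $(I+G^\ast G)^{-1}$, whereas the paper cites the formulas from the literature.
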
 
\begin{proof}
We apply Lemma \ref{lem:MA} (in the Hilbert space $\HH\times \GG$). For this, let
\begin{align} \lab{eq:def.tv}
\begin{aligned}
&T \coloneqq A\times B: \HH\times \GG\tos \HH\times \GG, \quad T(z, v) = A(z)\times B(v)\quad \mbox{and}\\[2mm]
&V \coloneqq \set{(z, v)\in \HH\times \GG \mid v = Gz }.
\end{aligned}
\end{align}
Note that $T$ is maximal monotone, and $V$ is a closed linear subspace of $\HH\times \GG$. Using the assumptions on the sequences $\seq{(r^k, a^k)}$, $\seq{(s^k, b^k)}$, the definition of $T(\cdot)$ as in \eqref{eq:def.tv} and Lemma 
\ref{lem:prope}, we obtain
\begin{align} \lab{eq:redpen}
\begin{aligned}
&(x^k, u^k)\in \Gra T^{[\varepsilon^k]}\quad \mbox{for all}\quad k\geq 0\quad \mbox{where}\\[2mm]
& x^k \coloneqq (r^k, s^k), \quad u^k\coloneqq (a^k, b^k)\quad \mbox{and}\quad \varepsilon^k\coloneqq 
\varepsilon^{a, k} + \varepsilon^{b, k}.
\end{aligned}
\end{align}
Assume now that 
\begin{align}\lab{eq:redpen02}
 Gr^k - s^k \to 0\quad  \mbox{and}\quad a^k + G^\ast b^k \to 0. 
 \end{align}
 Then, in view of the assumptions that 
$r^k \wto \overline{r}$ and $b^k \wto \overline{b}$, and the (weak) continuity of the linear operators $G$ and $G^\ast$,
we also have $s^k\wto G\,\overline r$ and $a^k\wto -G^\ast\,\overline b$. Hence, also using the assumptions that
both $\seq{\varepsilon^{a, k}}$ and $\seq{\varepsilon^{b, k}}$ converge to zero, we find
\begin{align}\lab{eq:redpen03}
x^k\wto (\overline r, G\,\overline r)\eqqcolon \overline x, \quad u^k\wto (-G^\ast\overline b, \overline b)\eqqcolon \overline u
\quad \mbox{and} \quad \varepsilon^k \to 0,
\end{align}
where $x^k$, $u^k$ and $\varepsilon^k$ are as in \eqref{eq:redpen}.

Now, let us check the conditions $x^k - P_V x^k \to 0$ and 
$P_V u^k \to 0$. From \cite[Lemma 3.1]{ACS14} we know that, for all $w = (z, v)\in \HH\times \GG$,
\begin{align*}
\begin{aligned}
& P_V w = \left( (I + G^\ast G)^{-1}(z + G^\ast v), G(I + G^\ast G)^{-1}(z + G^\ast v) \right),\\[2mm]
& P_{V^\perp} w = \left( G^\ast(I + G G^\ast))^{-1}(Gz - v),-(I + G G^\ast)^{-1}(Gz - v) \right),
\end{aligned}
\end{align*}
which in turn combined with \eqref{eq:redpen02} yields
\begin{align}\lab{eq:redpen04}
  P_{V^\perp} x^k \to 0 \quad \mbox{and} \quad P_V u^k \to 0,
\end{align}
where $x^k$ and $u^k$ are as in \eqref{eq:redpen}. Hence, from \eqref{eq:redpen}, \eqref{eq:redpen03}, \eqref{eq:redpen04}
and Lemma \ref{lem:MA} we conclude, in particular, that $(\overline x, \overline u)\in \Gra T$, i.e, 
$\overline u\in T(\overline x)$, which gives -- see \eqref{eq:redpen03} --
\begin{align*}
(-G^\ast\overline b, \overline b) \in (A \times B)(\overline r, G\,\overline r)
\end{align*}
and so $-G^\ast \overline b\in A(\overline r)$ and $\overline b\in B(G\overline r)$, concluding the proof of the lemma.
\end{proof}

\mgap

In the next corollary, we assume that $\HH_i$ ($1\leq i\leq n$) are real Hilbert spaces with $\HH_n = \HH_0$.

\mgap

\bcoro \lab{cor:proof}
For $i = 1, \ldots, n$, let $T_i \colon \Ha_i \tos \Ha_i$ be a maximal monotone operator, let $\seq{(x_i^k, y_i^k)}$ be a sequence in $\Gra T_i^{[\varepsilon_i^k]}$, where $\varepsilon_i^k \to 0$, and assume $x_n^k\wto \overline z\in \HH_0$ and 
$y_i^k \wto \overline w_i\in \HH_i$ \emph{(}$1\leq i\leq n-1$\emph{)}. 
Also, for $i = 1, \ldots, n-1$, let $G_i \colon \Ha_0 \to  \Ha_i$  be a bounded linear operator.
If $G_i x_n^k - x_i^k \to 0$ \emph{(}$1\leq i\leq n-1$\emph{)} and $y_n^k + \sum_{i=1}^{n-1}\,G_i^* y_i^k \to 0$, then
\begin{align} \lab{eq:proof01}
\overline w_i \in T_i (G_i \overline z),\quad i = 1, \ldots, n-1,\quad
 -\sum_{i=1}^{n-1}\,G_i^* \overline w_i \in T_n(\overline z).
\end{align}
\ecoro
\begin{proof}
Let $\HH \coloneqq \HH_0$, $\GG \coloneqq \HH_1 \times \dots \times \HH_{n-1}$ (endowed with the usual (Cartesian) inner product) and define the maximal monotone operators $A\colon \HH \tos \HH$ and $B\colon \GG \tos \GG$ by
\begin{align*}
A \coloneqq T_n,\qquad B \coloneqq T_1\times \dots \times T_{n-1}.
\end{align*}
Also, define the bounded linear operator $G\colon \HH \to \GG$ by $G = (G_1, \ldots, G_{n-1})$, i.e,
\begin{align*}
Gz \coloneqq (G_1z, \ldots, G_{n-1}z)\quad \mbox{for all}\quad z\in \HH.
\end{align*}
Now, define the sequences
\begin{align*}
& a^k\coloneqq y_n^k,\quad \varepsilon^{a, k}\coloneqq \varepsilon_n^k,\quad r^k\coloneqq x_n^k,\\[2mm]
& b^k\coloneqq (y_1^k, \ldots, y_{n-1}^k),\quad \varepsilon^{b, k}\coloneqq \sum_{i=1}^{n-1}\,\varepsilon_i^k,
\quad s^k\coloneqq (x_1^k,\ldots, x_{n-1}^k).
\end{align*}

Using the above definitions and Lemma \ref{lem:prope}, we conclude that $\seq{(r^k, a^k)}$ and $\seq{(s^k, b^k)}$ are sequences in $\Gra A^{[\varepsilon^{a, k}]}$ and $\Gra B^{[\varepsilon^{b, k}]}$, respectively. Moreover, using the assumptions of the corollary we also have $\varepsilon^{a, k}\to 0$, 
$\varepsilon^{b, k}\to 0$ and
\begin{align*}
r^k\wto \overline z \eqqcolon \overline r \quad \mbox{and} \quad b^k \wto (\overline w_1, \ldots, \overline w_{n-1}) \eqqcolon \overline b.
\end{align*}

In order to apply Lemma \ref{lem:MA2}, it remains to verify the conditions $Gr^k - s^k \to 0$ and $a^k + G^\ast b^k \to 0$. To this end, note first that
\begin{align*}
Gr^k - s^k &= (G_1r^k, \ldots, G_{n-1}r^k) - (x_1^k,\ldots, x_{n-1}^k)\\
    & = (G_1x_n^k - x_1^k, \ldots, G_{n-1}x_n^k - x_{n-1}^k),
\end{align*}
and so, by the assumptions of the corollary, we get $Gr^k - s^k \to 0$. Using the fact that
$G^\ast (w_1, \ldots, w_{n-1}) = \sum_{i=1}^{n-1}G_i^\ast w_i$, for every $(w_1, \ldots , w_{n-1}) \in \GG$, we also obtain
\begin{align*}
 a^k + G^\ast b^k = y_n^k + \sum_{i=1}^{n-1}G_i^\ast y_i^k,
\end{align*}
which then converges to zero, also in view of the assumption that $y_n^k + \sum_{i=1}^{n-1}G_i^\ast y_i^k \to 0$.

\mgap

Therefore, by Lemma \ref{lem:MA2}, we conclude that $(\overline{r}, -G^\ast \overline{b})\in \Gra A$ and $(G\overline{r}, \overline{b})\in \Gra B$, which happens to be equivalent to \eqref{eq:proof01}.
\end{proof}




\def\cprime{$'$} \def\cprime{$'$}

\end{document}